\newtheorem{theorem}{Theorem}
\newtheorem{assumption}{Assumption}
\newtheorem{lemma}{Lemma}
\newtheorem{remark}{Remark}
\definecolor{gray}{RGB}{128,128,128}
\begin{document}
\title{\textbf{Communication-Efficient Distributed Online Nonconvex Optimization with \\Time-Varying Constraints}}
\author{Kunpeng~Zhang,
        Lei~Xu,
        Xinlei~Yi,
        Guanghui~Wen,
        Ming~Cao,\\
        Karl~H.~Johansson,
        Tianyou~Chai,
        and Tao~Yang
}


\maketitle

\begin{abstract}
This paper considers distributed online nonconvex optimization with time-varying inequality constraints, 
where the nonconvex local loss and convex local constraint functions can vary arbitrarily across iterations. 
For a time-varying directed graph,
we propose two distributed bandit online primal--dual algorithms with compressed communication to efficiently utilize communication resources in the one-point and two-point bandit feedback settings, respectively. 
To measure the performance of the proposed algorithms, we use a network regret metric grounded in the first-order optimality condition associated with the variational inequality.
We show that the compressed algorithm with one-point bandit feedback establishes an $\mathcal{O}( {{T^{\theta _1}}} )$ network regret bound 
and an $\mathcal{O}( {T^{7/4 - {\theta _1}}} )$ network cumulative constraint violation bound, 
where $T$ is the number of iterations and ${\theta _1} \in ( {3/4,5/6} ]$ is a user-defined trade-off parameter. 
When Slater’s condition holds, 
the network cumulative constraint violation bound is reduced to $\mathcal{O}( {T^{5/2 - 2{\theta _1}}} )$. 
In addition, we show that the compressed algorithm with two-point bandit feedback establishes an $\mathcal{O}( {{T^{\max \{ {1 - {\theta _1},{\theta _1}} \}}}} )$ network regret 
and an $\mathcal{O}( {T^{1 - {\theta _1}/2}} )$ network cumulative constraint violation bounds, where ${\theta _1} \in ( {0,1} )$. 
Moreover, the network cumulative constraint violation bound is reduced to $\mathcal{O}( {T^{1 - {\theta _1}}} )$ under Slater’s condition.
Finally, a simulation example is presented to validate the theoretical results.
\end{abstract}

\begin{IEEEkeywords}
Compressed communication, cumulative constraint violation, distributed online optimization, nonconvex optimization, Slater’s condition, time-varying constraints.
\end{IEEEkeywords}


\section{Introduction}
Distributed online convex optimization is a sequential decision making problem, 
which can be understood as a structured repeated game with $T$ iterations between a network of agents and an adversary. 
Specifically, at each iteration $t$, each agent $i$ selects a decision ${x_{i,t}} \in \mathbb{X}$, where $\mathbb{X} \subseteq {\mathbb{R}^p}$ is a known convex set. 
Upon selection, the local loss function ${f_{i,t}}$ is privately revealed to agent $i$ by the adversary (full-information feedback setting). 
The goal of agents is to collaboratively minimize the network-wide accumulated loss, and the corresponding performance metric is network regret
\begin{flalign}
\nonumber
\frac{1}{n}\sum\limits_{i = 1}^n {\Big( {\sum\limits_{t = 1}^T {{f_t}( {{x_{i,t}}} )}  - \mathop {\min }\limits_{x \in \mathbb{X}} \sum\limits_{t = 1}^T {{f_t}( {{x}} )} } \Big)},
\end{flalign}
where ${f_t}( x ): = \frac{1}{n}\sum\nolimits_{j = 1}^n {{f_{j,t}}( x )} $ is the global loss function of the network at iteration $t$. To solve the distributed online convex optimization problem, various projection-based distributed online algorithms have been proposed, see, e.g., \cite{Yan2012, Tsianos2012, MateosNunez2014, Koppel2015, Hosseini2016, Shahrampour2017, Yuan2021, Li2022} and recent survey paper \cite{Li2023}.

Communication bottlenecks pose a significant challenge for distributed optimization due to limited communication bandwidth and power. 
To improve communication efficiency,
the effective compressed communication strategies have been extensively studied has been extensively studied, see, e.g., \cite{Zhu2016, Tang2018, Liao2022, Yi2022}, and recent survey paper \cite{Cao2023}.
Recently, there are few studies on distributed online convex optimization with compressed communication.
For example, the authors of \cite{Li2021b} propose the ECD-AMSGrad algorithm by integrating the AMSGrad algorithm in \cite{Reddi2019} with extrapolation compression strategy proposed in \cite{Tang2018}.
However, only empirical results are presented in \cite{Li2021b}.
The authors of \cite{Cao2023a, Tu2022} propose distributed online gradient descent algorithms with compressed communication by introducing auxiliary variables to estimate the neighbors' decisions at each iteration, and establish $\mathcal{O}( {\sqrt T } )$ network regret bounds for convex local loss functions.
Moreover, the authors of \cite{Tu2022} also propose two compressed distributed online gradient descent algorithms with bandit feedback 
(in the bandit feedback setting only the values of the local loss function at some points are privately revealed to the agents at each iteration). 
The authors of \cite{Ge2023} propose a distributed online gradient descent algorithm with one-point bandit feedback by using the compression strategy similar to that used in \cite{Tu2022}, and establish 
an~$\mathcal{O}( {T^{3/4}} )$ regret bound for convex local loss functions.
Note that all the aforementioned studies on distributed online convex optimization with compressed communication consider the fixed communication topology. 
However, \emph{when the communication topology is time-varying, it remains unclear how to design provably communication-efficient distributed online algorithms with sublinear regret} (Question~1). 

In practical applications, inequality constraints often exist. In distributed online convex optimization with inequality constraints, the feasible set becomes $\mathcal{X}$ rather than $\mathbb{X}$, which is typically characterized by
\begin{flalign}
\nonumber
\mathcal{X} = \{ {x:g( x ) \le {\mathbf{0}_m},x \in \mathbb{X}} \},
\end{flalign}
where $g:{\mathbb{R}^p} \to {\mathbb{R}^m}$ is a static convex constraint function, and $m$ is a positive integer. 
However, projecting onto $\mathcal{X}$ generally incurs a heavy computational burden due to inequality constraints. 
To tackle this challenge, the authors of \cite{Yuan2017} consider the idea of long-term constraints proposed in \cite{Mahdavi2012}, 
where decisions are chosen from the simple set $\mathbb{X}$ and static inequality constraints should be satisfied in the long term on average.
To calculate the accumulated constraint violation,  constraint violation is used as the performance metric.
Different from \cite{Yuan2017}, the authors of \cite{Yuan2021b} use the cumulative constraint violation metric proposed in \cite{Yuan2018}.
The cumulative constraint violation captures all constraints that are not satisfied, and thus cumulative constraint violation is stricter than constraint violation.
The authors of \cite{Yi2023} extend distributed online convex optimization with long-term constraints into the time-varying constraints setting where the information of local constraint functions 
is privately revealed along with the information of local loss functions by the adversary in each iteration. 
However, the distributed online algorithms proposed in \cite{Yuan2021b, Yi2023} cannot achieve reduced network cumulative constraint violation under Slater’s condition, as discussed in \cite{Yi2024, Zhang2024b}. 
Recently, the authors of \cite{Yi2024, Zhang2024b} propose novel distributed online algorithms for the full-information and bandit feedback settings, respectively, and establish reduced network cumulative constraint violation bounds under Slater’s condition.
Note that all the aforementioned studies on distributed online convex optimization with inequality constraints do not consider compressed communication. 
\emph{Under compressed communication, it remains unclear whether sublinear network regret and (cumulative) constraint violation can be established for distributed online optimization with inequality constraints} (Question~2).

Different from convex optimization as discussed above,
in nonconvex optimization, finding a globally optimal decision is often an NP-hard problem \cite{Kohler2017}.  
Consequently, the studies in the literature typically focus on establishing convergence to a point satisfying the first-order optimality condition , see, e.g., \cite{Scutari2019, Cannelli2020, Kungurtsev2023}. 
Recently, to measure the performance of the algorithms for distributed online nonconvex optimization, 
the authors of \cite{Lu2021a} propose a regret metric by utilizing the first-order optimality condition associated with the variational inequality. 
As a result, an $\mathcal{O}( {\sqrt T} )$ regret bound is established for nonconvex local loss functions. 
The authors of \cite{Suo2025} consider inequality constraints for distributed online nonconvex optimization. 
By utilizing the regret metric proposed in \cite{Lu2021a} and the constraint violation metric, they establish an $\mathcal{O}( {{T^{1/2 + c}}} )$ regret bound 
and an $\mathcal{O}( {{T^{3/4}}} )$ constraint violation bound for nonconvex local loss and convex local constraint functions under Slater’s condition, where $c \in ( {0,1/2} )$. 
Note that \cite{Suo2025} considers static inequality constraints and the constraint violation metric. 
\emph{It remains unclear whether sublinear network regret and (cumulative) constraint violation can be achieved for distributed online nonconvex optimization with time-varying constraints} (Question~3). 
In addition, unlike convex optimization, in nonconvex optimization Slater’s condition alone does not generally guarantee strong duality because the standard Lagrange dual often has a nonzero duality gap in nonconvex problems \cite{Bertsekas1997}. 
\emph{It also remains unclear whether reduced network (cumulative) constraint violation can be achieved for distributed online nonconvex optimization with time-varying constraints under Slater’s condition} (Question~4). 

In this paper, we consider the distributed online nonconvex optimization problem with time-varying constraints in the bandit feedback setting.
The main contributions are outlined as follows.
\begin{itemize}
\item[$\bullet$] 
We propose two distributed bandit online primal--dual algorithms with compressed communication by using the general compressors with globally bounded absolute compression error, 
for the one-point and two-point bandit feedback settings, respectively.
Different from \cite{Li2021b, Cao2023a, Tu2022, Ge2023} that consider a fixed and undirected communication topology, 
we consider a uniformly jointly strongly connected time-varying directed communication topology.
Moreover, in \cite{Li2021b, Cao2023a, Tu2022, Ge2023} the local loss function is convex and inequality constraints are not considered while we consider nonconvex local loss functions and time-varying inequality constraints.
\item[$\bullet$]
We show in Theorem~1 that the compressed distributed online primal--dual algorithm with one-point bandit feedback establishes 
sublinear network regret and cumulative constraint violation bounds, thereby answering {Questions 1--3}.  
Moreover, the network cumulative constraint violation bound is further reduced under Slater’s condition, thereby answering {Question 4}.
To the best of our knowledge, this paper is the first to establish sublinear network regret and cumulative constraint violation bounds 
for distributed online nonconvex optimization with time-varying constraints in the one-point bandit feedback setting, both without and with Slater’s condition.
Notably, such results have not even been established for the convex local loss functions.
\item[$\bullet$]
We show in Theorem~2 that the compressed distributed online primal--dual algorithm with two-point bandit feedback establishes 
an $\mathcal{O}( {{T^{\max \{ {1 - {\theta _1},{\theta _1}} \}}}} )$ network regret bound and an $\mathcal{O}( {T^{1 - {\theta _1}/2}} )$ network cumulative constraint violation bound. 
That answers {Questions 1--3}.
These bounds are the same as the results established in \cite{Yi2023, Yi2024, Zhang2024b} where the local loss function is convex and compressed communication is not considered. 
When Slater’s condition holds, we further show in Theorem~2 that the algorithm establishes a reduced $\mathcal{O}( {T^{1 - {\theta _1}}} )$ network cumulative constraint violation bound.
That answers {Question 4}. 
This bound is the same as the results established in \cite{Yi2024, Zhang2024b}.
\end{itemize}

\renewcommand{\arraystretch}{0.95}
\begin{table*}\LARGE
\centering
\caption{Comparison of this paper to related studies on distributed online optimization.}
\resizebox{\linewidth}{!}{
\begin{tabular}{c|c|c|c|c|c|c|c|c|c}
\Xcline{1-10}{1pt}
\multicolumn{2}{c|}{\multirow{3}{*}{Reference}} & {\multirow{3}{*}{\makecell{Loss \\ functions}}} & \multirow{3}{*}{\makecell{Constraint \\functions}} & \multirow{3}{*}{\makecell{Slater's \\condition}} & \multirow{3}{*}{\makecell{Compressed \\ communication}} & \multirow{3}{*}{\makecell{Information \\feedback}} & \multirow{3}{*}{Regret} & \multirow{3}{*}{\makecell{Constraint \\violation}} & \multirow{3}{*}{\makecell{Cumulative \\constraint \\violation}}\\
\multicolumn{2}{c|}{} & & & & & & & \\
\multicolumn{2}{c|}{} & & & & & & & \\
\cline{1-10}
\multicolumn{2}{c|}{\multirow{2}{*}{\cite{Cao2023a}}} & {\multirow{2}{*}{Convex}} & \multirow{2}{*}{\makecell{$\times$}} & \multirow{2}{*}{\makecell{$\times$}} & \multirow{2}{*}{\makecell{$\checkmark$}} & \multirow{2}{*}{\makecell{Full-information}} & \multirow{2}{*}{$\mathcal{O}( {\sqrt T } )$} & \multicolumn{2}{c}{\multirow{2}{*}{Not applicable}}\\
\multicolumn{2}{c|}{} & & & & & &\\
\cline{1-10}
\multicolumn{2}{c|}{\multirow{6}{*}{\cite{Tu2022}}} & {\multirow{6}{*}{Convex}} & \multirow{6}{*}{\makecell{$\times$}} & \multirow{6}{*}{\makecell{$\times$}} & \multirow{6}{*}{\makecell{$\checkmark$}} & \multirow{2}{*}{\makecell{Full-information}} & \multirow{2}{*}{$\mathcal{O}( {\sqrt T } )$} & \multicolumn{2}{c}{\multirow{6}{*}{Not applicable}}\\
\multicolumn{2}{c|}{} & & & & & &\\
\cline{7-8}
\multicolumn{2}{c|}{} & & & & & \multirow{2}{*}{\makecell{One-point bandit}} & \multirow{2}{*}{$\mathcal{O}( {{T^{3/4}} } )$} \\
\multicolumn{2}{c|}{} & & & & & &\\
\cline{7-8}
\multicolumn{2}{c|}{} & & & & & \multirow{2}{*}{\makecell{Two-point bandit}} & \multirow{2}{*}{$\mathcal{O}( {{\sqrt T} } )$} \\
\multicolumn{2}{c|}{} & & & & & &\\
\cline{1-10}
\multicolumn{2}{c|}{\multirow{2}{*}{\cite{Ge2023}}} & {\multirow{2}{*}{Convex}} & \multirow{2}{*}{\makecell{$\times$}} & \multirow{2}{*}{\makecell{$\times$}} & \multirow{2}{*}{\makecell{$\checkmark$}} & \multirow{2}{*}{\makecell{One-point bandit}} & \multirow{2}{*}{$\mathcal{O}( {T^{3/4}} )$} & \multicolumn{2}{c}{\multirow{2}{*}{Not applicable}}\\
\multicolumn{2}{c|}{} & & & & & &\\
\cline{1-10}
\multicolumn{2}{c|}{\multirow{3}{*}{\cite{Yuan2017}}} & {\multirow{3}{*}{Convex}} & \multirow{3}{*}{\makecell{Convex and static}} & \multirow{3}{*}{\makecell{$\times$}} & \multirow{3}{*}{\makecell{$\times$}} & \multirow{3}{*}{\makecell{Full-information \\of $f_{i,t}$ and $g$ }} & \multirow{3}{*}{\makecell{$\mathcal{O}( {{T^{1/2 + c}}} )$, \\ $c \in ( {0,1/2} )$}} & \multirow{3}{*}{$\mathcal{O}( {{T^{1 - c/2}}} )$} & \multirow{3}{*}{Not applicable}\\
\multicolumn{2}{c|}{} & & & & & & &\\
\multicolumn{2}{c|}{} & & & & & & &\\
\cline{1-10}
\multicolumn{2}{c|}{\multirow{6}{*}{\cite{Yuan2021b}}} & {\multirow{6}{*}{Quadratic}} & \multirow{6}{*}{\makecell{Linear and static}} & \multirow{6}{*}{\makecell{$\times$}} & \multirow{6}{*}{\makecell{$\times$}} & \multirow{3}{*}{\makecell{Full-information \\of $f_{i,t}$ and $g$ }} & \multirow{3}{*}{\makecell{$\mathcal{O}\big( {{T^{\max \{ {c, 1 - c} \}}}} \big)$, \\$c \in ( {0,1} )$}} & \multicolumn{2}{c}{\multirow{3}{*}{$\mathcal{O}( {{T^{1 - c/2}}} )$}}\\
\multicolumn{2}{c|}{} & & & & & &\\
\multicolumn{2}{c|}{} & & & & & &\\
\cline{7-10}
\multicolumn{2}{c|}{} & & & & & \multirow{3}{*}{\makecell{Two-point bandit of $f_{i,t}$ and \\ full-information of $g$ }} & \multirow{3}{*}{\makecell{$\mathcal{O}\big( {{T^{\max \{ {c, 1 - c} \}}}} \big)$, \\$c \in ( {0,1} )$}} & \multicolumn{2}{c}{\multirow{3}{*}{$\mathcal{O}( {{T^{1 - c/2}}} )$}}\\
\multicolumn{2}{c|}{} & & & & & &\\
\multicolumn{2}{c|}{} & & & & & &\\
\cline{1-10}
\multicolumn{2}{c|}{\multirow{6}{*}{\cite{Yi2023}}} & {\multirow{6}{*}{Convex}} & \multirow{6}{*}{\makecell{Convex and time-varying}} & \multirow{6}{*}{\makecell{$\times$}} & \multirow{6}{*}{\makecell{$\times$}} & \multirow{3}{*}{\makecell{Full-information \\of $f_{i,t}$ and $g_{i,t}$ }} & \multirow{3}{*}{\makecell{$\mathcal{O}\big( {{T^{\max \{ {c, 1 - c} \}}}} \big)$, \\$c \in ( {0,1} )$}} & \multicolumn{2}{c}{\multirow{3}{*}{$\mathcal{O}( {{T^{1 - c/2}}} )$}}\\
\multicolumn{2}{c|}{} & & & & & &\\
\multicolumn{2}{c|}{} & & & & & &\\
\cline{7-10}
\multicolumn{2}{c|}{} & & & & & \multirow{3}{*}{\makecell{Two-point bandit \\ 
of $f_{i,t}$ and $g_{i,t}$ }} & \multirow{3}{*}{\makecell{$\mathcal{O}\big( {{T^{\max \{ {c, 1 - c} \}}}} \big)$, \\$c \in ( {0,1} )$}} & \multicolumn{2}{c}{\multirow{3}{*}{$\mathcal{O}( {{T^{1 - c/2}}} )$}}\\
\multicolumn{2}{c|}{} & & & & & &\\
\multicolumn{2}{c|}{} & & & & & &\\
\cline{1-10}
\multicolumn{2}{c|}{\multirow{4}{*}{\cite{Yi2024}}} & {\multirow{4}{*}{Convex}} & \multirow{4}{*}{\makecell{Convex and time-varying}} & \multirow{2}{*}{\makecell{$\times$}} & \multirow{4}{*}{\makecell{$\times$}} & \multirow{4}{*}{\makecell{Full-information \\of $f_{i,t}$ and $g_{i,t}$}} & \multirow{4}{*}{\makecell{$\mathcal{O}\big( {{T^{\max \{ {c, 1 - c} \}}}} \big)$, \\$c \in ( {0,1} )$}} & \multicolumn{2}{c}{\multirow{2}{*}{$\mathcal{O}( {{T^{1 - c/2}}} )$}}\\
\multicolumn{2}{c|}{} & & & & & &\\
\cline{5-5}
\cline{9-10}
\multicolumn{2}{c|}{} & & & \multirow{2}{*}{\makecell{$\checkmark$}} & & & & \multicolumn{2}{c}{\multirow{2}{*}{$\mathcal{O}( {{T^{1 - c}}} )$}}\\
\multicolumn{2}{c|}{} & & & & & &\\
\cline{1-10}
\multicolumn{2}{c|}{\multirow{4}{*}{\cite{Zhang2024b}}} & {\multirow{4}{*}{Convex}} & \multirow{4}{*}{\makecell{Convex and time-varying}} & \multirow{2}{*}{\makecell{$\times$}} & \multirow{4}{*}{\makecell{$\times$}} & \multirow{4}{*}{\makecell{Two-point bandit \\of $f_{i,t}$ and $g_{i,t}$}} & \multirow{4}{*}{\makecell{$\mathcal{O}\big( {{T^{\max \{ {c, 1 - c} \}}}} \big)$, \\$c \in ( {0,1} )$}} & \multicolumn{2}{c}{\multirow{2}{*}{$\mathcal{O}( {{T^{1 - c/2}}} )$}}\\
\multicolumn{2}{c|}{} & & & & & &\\
\cline{5-5}
\cline{9-10}
\multicolumn{2}{c|}{} & & & \multirow{2}{*}{\makecell{$\checkmark$}} & & & & \multicolumn{2}{c}{\multirow{2}{*}{$\mathcal{O}( {{T^{1 - c}}} )$}}\\
\multicolumn{2}{c|}{} & & & & & &\\
\cline{1-10}
\multicolumn{2}{c|}{\multirow{2}{*}{\cite{Lu2021a}}} & {\multirow{2}{*}{Nonconvex}} & \multirow{2}{*}{\makecell{$\times$}} & \multirow{2}{*}{\makecell{$\times$}} & \multirow{2}{*}{\makecell{$\times$}} & \multirow{2}{*}{\makecell{Full-information}} & \multirow{2}{*}{$\mathcal{O}( {\sqrt T } )$} & \multicolumn{2}{c}{\multirow{2}{*}{Not applicable}}\\
\multicolumn{2}{c|}{} & & & & & &\\
\cline{1-10}
\multicolumn{2}{c|}{\multirow{3}{*}{\cite{Suo2025}}} & {\multirow{3}{*}{Nonconvex}} & \multirow{3}{*}{\makecell{Convex and static}} & \multirow{3}{*}{\makecell{$\checkmark$}} & \multirow{3}{*}{\makecell{$\times$}} & \multirow{3}{*}{\makecell{Full-information \\of $f_{i,t}$ and $g_i$ }} & \multirow{3}{*}{\makecell{$\mathcal{O}( {{T^{1/2 + c}}} )$, \\ $c \in ( {0,1/2} )$}} & \multirow{3}{*}{$\mathcal{O}( {{T^{3/4}}} )$} & \multirow{3}{*}{Not applicable}\\
\multicolumn{2}{c|}{} & & & & & & &\\
\multicolumn{2}{c|}{} & & & & & & &\\
\cline{1-10}
\multicolumn{2}{c|}{\multirow{8}{*}{This paper}} & {\multirow{8}{*}{Nonconvex}} & \multirow{8}{*}{\makecell{Convex and time-varying}} & \multirow{2}{*}{\makecell{$\times$}} & \multirow{8}{*}{\makecell{$\checkmark$}} & \multirow{4}{*}{\makecell{One-point bandit \\of $f_{i,t}$ and $g_{i,t}$}} & \multirow{4}{*}{\makecell{$\mathcal{O}( {{T^{\theta _1}}} )$, \\ ${\theta _1} \in ( {3/4,5/6} ]$}} & \multicolumn{2}{c}{\multirow{2}{*}{$\mathcal{O}( {T^{7/4 - {\theta _1}}} )$}}\\
\multicolumn{2}{c|}{} & & & & & &\\
\cline{5-5}
\cline{9-10}
\multicolumn{2}{c|}{} & & & \multirow{2}{*}{\makecell{$\checkmark$}} & & & & \multicolumn{2}{c}{\multirow{2}{*}{$\mathcal{O}( {T^{5/2 - 2{\theta _1}}} )$}}\\
\multicolumn{2}{c|}{} & & & & & &\\
\cline{5-5}
\cline{7-7}
\cline{8-10}
\multicolumn{2}{c|}{} & & & \multirow{2}{*}{\makecell{$\times$}} & & \multirow{4}{*}{\makecell{Two-point bandit \\of $f_{i,t}$ and $g_{i,t}$}} & \multirow{4}{*}{\makecell{$\mathcal{O}\big( {{T^{\max \{ {{\theta _1}, 1 - {\theta _1}} \}}}} \big)$, \\$\theta _1 \in ( {0,1} )$}} & \multicolumn{2}{c}{\multirow{2}{*}{$\mathcal{O}( {{T^{1 - {\theta _1}/2}}} )$}}\\
\multicolumn{2}{c|}{} & & & & & &\\
\cline{5-5}
\cline{9-10}
\multicolumn{2}{c|}{} & & & \multirow{2}{*}{\makecell{$\checkmark$}} & & & & \multicolumn{2}{c}{\multirow{2}{*}{$\mathcal{O}( {{T^{1 - {\theta _1}}}} )$}}\\
\multicolumn{2}{c|}{} & & & & & &\\
\Xcline{1-10}{1pt}
\end{tabular}
}
\end{table*}
\renewcommand{\arraystretch}{1.0}

The detailed comparison between this paper and related studies is presented in TABLE~I. 
For clarity, we present a special case of the results in Theorem~1.

The remainder of this paper is organized as follows.
Section~II presents the problem formulation.
Sections~III and~IV propose the compressed distributed online primal--dual algorithm with one-point and two-point bandit feedback, respectively,
and analyze their network regret and cumulative constraint violation bounds without and with Slater’s condition, respectively.
Section~V provides a simulation example to verify the theoretical results.
Finally, Section~VI concludes this paper.
All proofs are given in Appendix.

\noindent\textbf{Notations:} All inequalities and equalities throughout this paper are understood componentwise. 
${\mathbb{N}_ + }$, $\mathbb{R}$, ${\mathbb{R}^p}$ and $\mathbb{R}_ + ^p$ denote the sets of all positive integers, real numbers, $p$-dimensional and nonnegative vectors, respectively.
$\lfloor  \cdot  \rfloor $ denotes the floor operation.
$\mathbf{E}[  \cdot  ]$ denotes the total expectation with respect to all underlying randomness.
${p_t} = \mathcal{O}( {{q_t}} )$ means that there exists a constant $a > 0$ such that ${p_t} \le a{q_t}$ for all $t$.
Given $m$ and $n \in {\mathbb{N}_ + }$, $[ m ]$ denotes the set $\{ {1, \cdot  \cdot  \cdot ,m} \}$, and $[m, n]$ denotes the set $\{ {m, \cdot  \cdot  \cdot ,n} \}$ for $m < n$. 
Given vectors $x$ and $y$, ${x^T}$ denotes the transpose of the vector $x$, and $\langle {x,y} \rangle $ and $x \otimes y$ denote the standard inner and Kronecker product, respectively. 
${\mathbf{0}_p}$ and ${\mathbf{1}_p}$ denote the $p$-dimensional column vector whose components are all $0$ and $1$, respectively. 
$\mathrm{col}( {q_1}, \cdot  \cdot  \cdot ,{q_n} )$ denotes the concatenated column vector of ${q_i} \in {\mathbb{R}^{{m_i}}}$ for $i \in [ n ]$. 
${\mathbb{B}^p}$ and ${\mathbb{S}^p}$ denote the unit ball and sphere centered around the origin in ${\mathbb{R}^p}$ under Euclidean norm, respectively. 
For a set $\mathbb{K} \in {\mathbb{R}^p}$ and a vector $ x \in {\mathbb{R}^p}$, ${\mathcal{P}_{\mathbb{K}}}(  x  )$ denotes the projection of the vector $x$ onto the set $\mathbb{K}$, 
i.e., ${\mathcal{P}_{\mathbb{K}}}( x ) = \arg {\min _{y \in {\mathbb{K}}}}{\| {x - y} \|^2}$, and $[  x  ]_+$ denotes ${\mathcal{P}_{\mathbb{R}_ + ^p}}( x )$. 
For a function $f$ and a vector $ x $, $\nabla f( x )$ denotes the gradient of $f$ at $x$.

\section{Problem Formulation}
Consider the distributed online nonconvex optimization problem with time-varying constraints.
At iteration $t$, a network of $n$ agents is modeled by a time-varying directed graph ${\mathcal{G}_t} = ( {\mathcal{V},{\mathcal{E}_t}} )$ with the agent set $\mathcal{V} = [ n ]$ and the edge set ${\mathcal{E}_t} \subseteq \mathcal{V} \times \mathcal{V}$. $( {j,i} ) \in {\mathcal{E}_t}$ indicates that agent $i$ can receive information from agent $j$.
The sets of in- and out-neighbors of agent~$i$ are $\mathcal{N}_i^{\text{in}}( {{\mathcal{G}_t}} ) = \{ {j \in [ n ]|( {j,i} ) \in {\mathcal{E}_t}} \}$ and $\mathcal{N}_i^{\text{out}}( {{\mathcal{G}_t}} ) = \{ {j \in [ n ]|( {i,j} ) \in {\mathcal{E}_t}} \}$, respectively.
Let $\{ {{f_{i,t}}:\mathbb{X} \to \mathbb{R}} \}$ and $\{ {{g_{i,t}}:\mathbb{X} \to {\mathbb{R}^{{m_i}}}} \}$ be sequences of nonconvex local loss and convex local constraint functions, respectively, where ${m_i}$ is a positive integer and ${g_{i,t}} \le {\mathbf{0}_{{m_i}}}$ is the local constraint.
Each agent $i$ selects a local decisions $\{ {{x_{i,t}} \in \mathbb{X}} \}$ without prior access to $\{ {{f_{i,t}}} \}$ and $\{ {{g_{i,t}}} \}$. 
After the selection, the agent receives bandit feedback for the nonconvex local loss function $\{ {{f_{i,t}}} \}$ and convex local constraint function $\{ {{g_{i,t}}} \}$, which is held privately by the agent.
The goal of the agent is to choose the decision sequence $\{ {{x_{i,t}}} \}$ for $i \in [n]$ and $t \in [T]$ such that both network regret
\begin{flalign}
{\rm{Net}\mbox{-}\rm{Reg}}( T ) &:= \frac{1}{n}\sum\limits_{i = 1}^n {\Big( {\sum\limits_{t = 1}^T {\langle {\nabla {f_t}( {{x_{i,t}}} ),{x_{i,t}}} \rangle }  - \mathop {\inf }\limits_{x \in \mathcal{X}_T} \Big\langle {\sum\limits_{t = 1}^T {\nabla {f_t}( {{x_{i,t}}} )} ,x} \Big\rangle } \Big)}, \label{regret-eq1}
\end{flalign}
and network cumulative constraint violation
\begin{flalign}
{\rm{Net}\mbox{-}\rm{CCV}}( T ) &:= \frac{1}{n}\sum\limits_{i = 1}^n {\sum\limits_{t = 1}^T {\| {{{[ {{g_t}( {{x_{i,t}}} )} ]}_ + }} \|} }, \label{CCV-eq2}
\end{flalign}
increase sublinearly, where ${f_t}( x ) = \frac{1}{n}\sum\nolimits_{j = 1}^n {{f_{j,t}}( x )} $ is the global loss function of the network at iteration $t$, 
${\mathcal{X}_T} = \{ { x :x \in \mathbb{X}, {g_t}( x ) \le {\mathbf{0}_m},\forall t \in [ T ]} \}$ is the feasible set, 
and ${g_t}( x ) = {\rm{col}}\big( {{g_{1,t}}( x ), \cdot  \cdot  \cdot ,{g_{n,t}}( x )} \big) \in {\mathbb{R}^m}$ with $m = \sum\nolimits_{i = 1}^n {{m_i}} $ 
is the global constraint function of the network at iteration $t$. Similar to existing literature on distributed online convex optimization with time-varying constraints, 
see, e.g., \cite{Yi2023, Yi2024, Zhang2024b}, we assume that the feasible set ${\mathcal{X}_T}$ is nonempty for all $T \in {\mathbb{N}_ + }$.


The following commonly used assumptions are made throughout this paper.
\begin{assumption}
The set $\mathbb{X}$ is closed.
Moreover, the convex set $\mathbb{X}$ contains the ball of radius $r( \mathbb{X} )$ and is contained in the ball of radius $R( \mathbb{X} )$, i.e.,
\begin{flalign}
r( \mathbb{X} ){\mathbb{B}^p} \subseteq \mathbb{X} \subseteq R( \mathbb{X} ){\mathbb{B}^p}. \label{ass1-eq1}
\end{flalign}
\end{assumption}

\begin{assumption}
For all $i \in [n]$, $t \in {\mathbb{N}_ + }$, the gradients $\nabla {f_{i,t}}( x )$ and $\nabla {g_{i,t}}( x )$ exist. Moreover, there exist constants ${G_1}$ and ${G_2}$ such that
\begin{subequations}
\begin{flalign}
\| {\nabla {f_{i,t}}( x )} \| &\le {G_1}, x \in \mathbb{X}, \label{ass4-eq1a}\\
\| {\nabla {g_{i,t}}( x )} \| &\le {G_2}, x \in \mathbb{X}. \label{ass4-eq1b}
\end{flalign}
\end{subequations}
\end{assumption}

For all $i \in [n]$, $t \in {\mathbb{N}_ + }$, let ${\phi _{i,t}}( u ) = {f_{i,t}}\big( {x + u( {y - x} )} \big)$, $x,y \in \mathbb{X}$.
Then, we have
\begin{flalign}
\nonumber
&\;\;\;\;\;| {{f_{i,t}}( y ) - {f_{i,t}}( x )} | \le | {{\phi _{i,t}}( 1 ) - {\phi _{i,t}}( 0 )} | \\
\nonumber
& = \Big| {\int_0^1 {\nabla {f_{i,t}}( {x + u( {y - x} )} )( {y - x} )du} } \Big| \\
& \le \int_0^1 {\| {\nabla {f_{i,t}}( {x + u( {y - x} )} )} \|\| {y - x} \|du} \le {G_1}\| {x - y} \|,
\label{LP}
\end{flalign}
where the first inequality holds due to the Cauchy--Schwarz inequality, and the first inequality holds due to \eqref{ass4-eq1a}.
In the same way, from \eqref{ass4-eq1b}, for all $i \in [n]$, $t \in {\mathbb{N}_ + }$, we have
\begin{flalign}
\| {{g_{i,t}}( x ) - {g_{i,t}}( y )} \| &\le {G_2}\| {x - y} \|, x,y \in \mathbb{X}. \label{ass4-eq2}
\end{flalign}
For all $i \in [n]$, $t \in {\mathbb{N}_ + }$, and $x \in \mathbb{X}$, from \eqref{ass1-eq1}, \eqref{LP}--\eqref{ass4-eq2}, we know that there exist a point ${x_0} \in \mathbb{X}$ and constants ${F_1}$ and  ${F_2}$ such that
\begin{flalign}
| {{f_{i,t}}( x )} | & \le | {{f_{i,t}}( x ) - {f_{i,t}}( {{x_0}} )} | + | {{f_{i,t}}( {{x_0}} )} | \le {F_1}, \label{ass2-eq1a} \\
\| {{g_{i,t}}( x )} \| & \le \| {{g_{i,t}}( x ) - {g_{i,t}}( {{x_0}} )} \| + \| {{g_{i,t}}( {{x_0}} )} \| \le {F_2}. \label{ass2-eq1b}
\end{flalign}

\begin{assumption}
For all $i \in [n]$, $t \in {\mathbb{N}_ + }$, there exists a constant ${L}$ such that
\begin{flalign}
\| {\nabla {f_{i,t}}( x ) - \nabla {f_{i,t}}( y )} \| \le {L}\| {x - y} \|, x,y \in \mathbb{X}. \label{ass5-eq1}
\end{flalign}
\end{assumption}

\begin{remark}\label{rem1}
Assumptions~1--2 are commonly used for distributed online convex optimization with inequality constraints \cite{Yuan2017, Yuan2021b, Yi2023, Yi2024, Zhang2024b}. 
Assumption~2 implies that the local loss function $ {{f_{i,t}}}$ and the local constraint function $ {{g_{i,t}}}$ are Lipschitz continuous on $\mathbb{X}$, as stated in \eqref{LP}--\eqref{ass4-eq2}.
Under Assumptions~1--2, we show that $f_{i,t}$ and $g_{i,t}$ are uniformly bounded, as stated in \eqref{ass2-eq1a}--\eqref{ass2-eq1b}. In contrast, such conditions are imposed as an additional assumption in \cite{Tu2022, Ge2023, Yi2023, Zhang2024b}.
Assumption~3 is commonly used for distributed online nonconvex optimization, e.g., \cite{Lu2021a, Suo2025}.
\end{remark}

Different from the studies on distributed online convex optimization with compressed communication \cite{Li2021b, Cao2023a, Tu2022, Ge2023} that consider a fixed and undirected graph, we consider a time-varying directed graph.
The following assumption on the graph is made, which is also used in \cite{Yuan2017, Yi2023, Yi2024, Zhang2024b}.
\begin{assumption}
For all $t \in {\mathbb{N}_ + }$, the time-varying directed graph $\mathcal{G}_t$ satisfies the following conditions:

\noindent (i) There exists a constant $w  \in ( {0,1} )$ such that ${[ {{W_t}} ]_{ij}} \ge w$ if $( {j,i} ) \in {\mathcal{E}_t}$ or $i = j$, and ${[ {{W_t}} ]_{ij}} = 0$ otherwise.

\noindent (ii) The mixing matrix ${W_t}$ is doubly stochastic, i.e., ${\sum\nolimits_{i = 1}^n {[ {{W_t}} ]} _{ij}} = {\sum\nolimits_{j = 1}^n {[ {{W_t}} ]} _{ij}} = 1$, $\forall i,j \in [ n ]$.

\noindent (iii) There exists an integer $B > 0$ such that the time-varying directed graph $( {\mathcal{V}, \cup _{l = 0}^{B - 1}{\mathcal{E} _{t + l}}} )$ is strongly connected.
\end{assumption}

We consider the scenario where the communication between agents is compressed by a class of general compressors with globally bounded absolute compression error, as given in the following.
\begin{assumption}
The compressor $\mathcal{C}:{\mathbb{Y}} \to {\mathbb{Y}}$ with $\mathbb{Y} \subseteq {\mathbb{R}^p}$ satisfies
\begin{flalign}
{\mathbf{E}_\mathcal{C}}[ {\| {\mathcal{C}( x ) - x} \|^2} ] \le C, \forall x \in {\mathbb{Y}}, \label{compre-eq1}
\end{flalign}
for constant $C \ge 0$. Here ${\mathbf{E}_\mathcal{C}}$ denotes the expectation over the internal randomness of the stochastic compression operator $\mathcal{C}$.
\end{assumption}
Assumption~5 requires that the magnitude of the compression error be bounded.
The smaller the value of $C$, the higher the compression accuracy.
Note that Assumption~5 allows that the compression error is larger than the magnitude of the original vector, which is not allowed in the compressor used in  \cite{Cao2023a, Tu2022}. 
Moreover, the compressor satisfying Assumption~5 does not need to be unbiased, unlike the compressor used in \cite{Li2021b}, which requires unbiasedness.
Assumption~5 covers many popular compressors in machine learning and signal processing such as the deterministic quantization used in \cite{Zhu2016, Li2017} 
and the unbiased stochastic quantization used in \cite{Tang2018, Reisizadeh2019}.

Slater’s condition is present in the following.
\begin{assumption}
There exists a point ${x_s} \in \mathbb{X}$ and a positive constant ${\varsigma _s}$ such that
\begin{flalign}
{g_t}( {{x_s}} ) \le  - {\varsigma _s}{\mathbf{1}_m},t \in {\mathbb{N}_ + }. \label{ass8-eq1}
\end{flalign}
\end{assumption}
Slater’s condition is a sufficient condition for strong duality in convex optimization problems \cite{Boyd2004}. 
However, for nonconvex problems, Slater’s condition alone does not generally guarantee strong duality because the standard Lagrange dual often has a nonzero duality gap in nonconvex problems \cite{Bertsekas1997}.
For distributed online convex optimization, \cite{Yi2024, Zhang2024b} establish reduced network cumulative constraint violation bounds under Slater’s condition. However, to the best of our knowledge, there are no studies to show that similar reductions can be achieved for the nonconvex setting.

\section{Compressed Distributed Online Primal--Dual Algorithm \\with One-Point Bandit Feedback}
In this section, we consider the scenario where the agent receives one-point bandit feedback for the nonconvex local loss functions and the convex local constraint functions. 
Moreover, we propose a compressed distributed online primal--dual algorithm with one-point bandit feedback by using the class of general compressors satisfying Assumption~5, 
and analyze the performance of the algorithm without and with Slater’s condition, respectively.
\subsection{Algorithm Description}
To achieve reduced network cumulative constraint violation, \cite{Yi2024} proposes a distributed online primal--dual algorithm for distributed online convex optimization with time-varying constraints. 
In that algorithm, the gradients $\nabla {f_{i,t}}({x_{i,t}})$ and $\nabla {g_{i,t}}({x_{i,t}})$ can be directly accessed by agent~$i$ at each iteration~$t$. 
In contrast, we consider the bandit feedback setting where each agent only receives one-point bandit feedback for its local loss and constraint functions. 
Inspired by the one-point gradient estimator proposed in \cite{Flaxman2005}, we estimate the gradients by
\begin{flalign}
\nonumber
\hat {\nabla}_1 {f_{i,t}}( {{x_{i,t}}} ) &= \frac{p}{{{\delta _t}}}\big( {{f_{i,t}}( {{x_{i,t}} + {\delta _t}{u_{i,t}}} )} \big){u_{i,t}} \in {\mathbb{R}^p}, \\
\nonumber
\hat {\nabla}_1 {{g_{i,t}}( {{x_{i,t}}} )} &= \frac{p}{{{\delta _t}}}{\big( {{{{g_{i,t}}( {{x_{i,t}} + {\delta _t}{u_{i,t}}} )} }} \big)^T}
\otimes {u_{i,t}} \in {\mathbb{R}^{p \times {m_i}}},
\end{flalign}
where $p$ is the dimension of ${x_{i,t}}$, ${\delta _t} \in ( {0,r( \mathbb{X} ){\xi _t}} ]$ is an exploration parameter, ${r( \mathbb{X} )}$ is a known positive constant given in Assumption~1, 
${\xi _t} \in ( {0,1} )$ is a shrinkage coefficient, and ${u_{i,t}} \in {\mathbb{S}^p}$ is a uniformly distributed random perturbation vector.
Note that $\hat {\nabla}_1 {f_{i,t}}( {{x_{i,t}}} )$ and $\hat {\nabla}_1 {{g_{i,t}}( {{x_{i,t}}} )}$ are defined over the contracted set $( {1 - {\xi _t}} )\mathbb{X}$ rather than $\mathbb{X}$. 
This contraction ensures that the perturbed point ${{x_{i,t}} + {\delta _t}{u_{i,t}}}$ remains within $\mathbb{X}$.

Then, we give a one-point bandit variant of the algorithm proposed in \cite{Yi2024} in the following: 
\begin{subequations}
\begin{flalign}
 {x_{i,t}} &= \sum\limits_{j = 1}^n {{{[{W_t}]}_{ij}}{z_{j,t}}}, \label{Algorithm0-eq1} \\
 {v_{i,t + 1}} &= {\gamma _t}{[ {{g_{i,t}}( {{x_{i,t}} + {\delta _t}{u_{i,t}}} )} ]_ + }, \label{Algorithm1-eq4} \\
 {{a} _{i,t + 1}} &= {\hat{\nabla}_1} {f_{i,t}}({x_{i,t}}) + {\hat{\nabla}_1} {{g_{i,t}}({x_{i,t}}) }{v_{i,t+1}}, \label{Algorithm1-eq5} \\
 {z_{i,t + 1}} &= {\mathcal{P}_{( {1 - {\xi _{t + 1}}} )\mathbb{X}}}( {{x_{i,t}} - {\alpha _t}{{ a }_{i,t + 1}}} ), \label{Algorithm1-eq6} 
\end{flalign} \label{Algorithm0} 
\end{subequations}
where ${{\gamma _t}}$ is the regularization parameter, and ${{\alpha _t}}$ is the stepsize.

The algorithm \eqref{Algorithm0} needs a substantial amount of data exchange over all iterations, especially when the scale of the agents and the dimension $p$ are large.
To improve communication efficiency, we use the compressor satisfying Assumption~5 and modify \eqref{Algorithm0-eq1} as follows:
\begin{flalign}
{x_{i,t}} = \sum\limits_{j = 1}^n {{{[{W_t}]}_{ij}}{\hat{z}_{j,t}^i}}, \label{Algorithm1-eq2}
\end{flalign}
where 
\begin{flalign}
\hat z_{j,t}^i = \left\{ \begin{array}{l}
\hat z_{j,t - 1}^i + {s_t}\mathcal{C}\big( {( {{z_{j,t}} - \hat z_{j,t - 1}} )/{s_t}} \big),j \in \big(\big( {{\cal N}_i^{{\rm{in}}}({{\cal G}_{t}}) \cap {\cal N}_i^{{\rm{in}}}({{\cal G}_{t-1}})} \big)\cup \{ i \}\big),\\
{\hat z_{j,t}}, j \in \big( {{\cal N}_i^{{\rm{in}}}({{\cal G}_t})\backslash {\cal N}_i^{{\rm{in}}}({{\cal G}_{t - 1}})} \big).
\end{array} \right. \label{Algorithm1-eq1}
\end{flalign}
and
\begin{flalign}
 {\hat{z}_{j,t}} &= {\hat{z}_{j,t - 1}} + {s_t}\mathcal{C}\big( {( {{z_{j,t}} - {\hat{z}_{j,t-1}}} )/{s_{t}}} \big), j \in [n]. \label{Algorithm1-eq11}
\end{flalign}
Here, ${\hat{z}_{j,t}^i}$ and ${\hat{z}_{j,t}}$ are stored in agent~$i$ and agent $j$, respectively, which are the estimates for ${z}_{j,t}$ stored in agent~$j$.
Let ${\hat{z}_{j,0}^i} = {\hat{z}_{j,0}} = {\mathbf{0}_p}$ for $j \in [ n ]$, ${\cal N}_i^{{\rm{out}}}({{\cal G}_0})={\cal N}_i^{{\rm{out}}}({{\cal G}_1})$ and ${\cal N}_i^{{\rm{in}}}({{\cal G}_0})={\cal N}_i^{{\rm{in}}}({{\cal G}_1})$. 
Then, if agent $j$ is an in-neighbor of agent $i$ at iteration~$t=1$, we know $\hat z_{j,1}^i = {\hat{z}_{j,1}}$ and only the compressed value ${\cal C}({z_{j,1}}/{s_1})$ is sent to agent $i$.
If agent $j$ remains the in-neighbor of agent $i$ at iteration~$t=2$, then $\hat z_{j,2}^i$ will be equal to ${\hat{z}_{j,2}}$.
Recall that we consider the time-varying graph.
Suppose that a new neighbor $j^*$ becomes an in-neighbor of agent $i$ at iteration~$t=2$.
In this case, agent $i$ is unable to track ${\hat{z}_{j^*,2}}$ as accurately as it does for its neighbor $j$ due to the missing of ${\hat{z}_{j^*,1}}$.
Therefore, we allow the new neighbor $j^*$ to send ${\hat{z}_{j^*,2}}$ to agent $i$, 
thereby ensuring that $\hat z_{j^*,2}^i = {\hat{z}_{j^*,2}}$. This procedure continues in the same manner for subsequent iterations.
Note that ${\hat{z}_{i,t}^i} = {\hat{z}_{i,t}}$ for $t \in {\mathbb{N}_ + }$.
As a result, we have ${\hat{z}_{j,t}^i}={\hat{z}_{j,t}}$ for $j \in \big( {\mathcal{N}_i^{\mathrm{in}}( {{\mathcal{G}_t}} ) \cup \{ i \}} \big)$ and $t \in {\mathbb{N}_ + }$.
Then, the compressed distributed online primal--dual algorithm with one-point bandit feedback is proposed. 
It is presented in pseudo-code as Algorithm~1.
\begin{algorithm}[!t]
  \caption{Compressed Distributed Online Primal--Dual Algorithm with One-Point Bandit Feedback} 
  \begin{algorithmic}
  \renewcommand{\algorithmicrequire}{\textbf{Input:}}
  \REQUIRE
   positive constant ${r( \mathbb{X} )}$, nonincreasing stepsize sequence $\{ {\alpha _t}\} \subseteq ( {0, + \infty })$, 
   nondecreasing regularization parameter sequence $\{ {\gamma _t}\} \subseteq ( {0, + \infty })$, nonincreasing shrinkage coefficient sequence $\{ {{\xi _t}} \} \subseteq ( {0,1} )$, 
   nonincreasing exploration parameter sequence $\{ {{\delta _t}} \} \subseteq ( {0,r( \mathbb{X} ){\xi _t}} ]$,
   and decreasing scaling parameter sequence $\{ {{s _t}} \} \subseteq ( {0, + \infty })$.
  \renewcommand{\algorithmicrequire}{\textbf{Initialize:}}
  \REQUIRE
         ${\hat{z}_{j,0}^i} = {\hat{z}_{j,0}} = {\mathbf{0}_p}$ for $i \in [ n ]$ and $j \in [ n ]$, and ${z_{i,1}} \in {(1-\xi_{1})\mathbb{X}}$, $\forall i \in [ n ]$, ${\cal N}_i^{{\rm{out}}}({{\cal G}_0})={\cal N}_i^{{\rm{out}}}({{\cal G}_1})$, ${\cal N}_i^{{\rm{in}}}({{\cal G}_0})={\cal N}_i^{{\rm{in}}}({{\cal G}_1})$.
    \FOR {$t = 1, \cdot  \cdot  \cdot, T-1 $}
    \FOR {$i = 1,\cdot  \cdot  \cdot,n$ in parallel}
    \STATE Send $\mathcal{C}\big( {( {{z_{i,t}} - {\hat{z}_{i,t-1}}} )/{s_{t}}} \big)$ to agent $j$ if $j \in \big( {{\cal N}_i^{{\rm{out}}}({{\cal G}_t}) \cap {\cal N}_i^{{\rm{out}}}({{\cal G}_{t - 1}})} \big)$ or send ${\hat{z}_{i,t}}$ to agent $j$ if $j \in \big( {{\cal N}_i^{{\rm{out}}}({{\cal G}_t})\backslash {\cal N}_i^{{\rm{out}}}({{\cal G}_{t - 1}})} \big)$, receive $\mathcal{C}\big( {( {{z_{j,t}} - {\hat{z}_{j,t-1}}} )/{s_{t}}} \big)$ from agent $j$ if $j \in \big( {{\cal N}_i^{{\rm{in}}}({{\cal G}_t}) \cap {\cal N}_i^{{\rm{in}}}({{\cal G}_{t - 1}})} \big)$ or receive ${\hat{z}_{j,t}}$ from agent $j$ if $j \in \big( {{\cal N}_i^{{\rm{in}}}({{\cal G}_t})\backslash {\cal N}_i^{{\rm{in}}}({{\cal G}_{t - 1}})} \big)$.
    \STATE Update ${x_{i,t}}$ according to \eqref{Algorithm1-eq2}--\eqref{Algorithm1-eq11}.
    \STATE Select vector ${u_{i,t}} \in {\mathbb{S}^p}$ independently and uniformly at random.
    \STATE Observe ${{f_{i,t}}( {{x_{i,t}} + {\delta _t}{u_{i,t}}} )}$ and ${{ {{g_{i,t}}( {{x_{i,t}} + {\delta _t}{u_{i,t}}} )}  }}$.
    \STATE Update {$z_{i,t + 1}$} according to \eqref{Algorithm1-eq4}--\eqref{Algorithm1-eq6}.
    \ENDFOR
    \ENDFOR
  \renewcommand{\algorithmicensure}{\textbf{Output:}}
  \ENSURE
      $\{ x_{i,t} \}$.
  \end{algorithmic}
\end{algorithm}

In the following, we provide a concise explanation of why the compression strategy employed in Algorithm~1 reduces the compression error when utilizing the class of general compressors that satisfy Assumption~5.
Note that the compression error is the difference between ${z_{i,t}}$ in \eqref{Algorithm0-eq1} and ${\hat{z}_{i,t}^j}$ in \eqref{Algorithm1-eq2} for $i \in \big( {\mathcal{N}_j^{\mathrm{in}}( {{\mathcal{G}_t}} ) \cup \{ j \}} \big)$.
For all $i \in [ n ]$ and $t \in {\mathbb{N}_ + }$, we have
\begin{flalign}
\nonumber
{( {{\mathbf{E}_\mathcal{C}}[ {\| {{z_{i,t}} - {\hat{z}_{i,t}^j}} \|} ]} )^2} 
& = {( {{\mathbf{E}_\mathcal{C}}[ {\| {{z_{i,t}} - {\hat{z}_{i,t}}} \|} ]} )^2} \\
\nonumber
&\le {\mathbf{E}_\mathcal{C}}[ {{\| {{z_{i,t}} - {\hat{z}_{i,t}}} \|^2}} ]  \\
\nonumber
& = {\mathbf{E}_\mathcal{C}}[{\| {{z_{i,t}} - {\hat{z}_{i,t-1}} - {s_{t}}\mathcal{C}( {( {{z_{i,t}} - {\hat{z}_{i,t-1}}} )/{s_{t}}} )} \|^2}] \\
& = s_{t}^2{\mathbf{E}_\mathcal{C}}[{\| {( {{z_{i,t}} - {\hat{z}_{i,t-1}}} )/{s_{t}} - \mathcal{C}( {( {{z_{i,t}} - {\hat{z}_{i,t-1}}} )/{s_{t}}} )} \|^2}]
\le Cs_{t}^2, \label{Lemma7-proof-eq9}
\end{flalign}
where the first equality holds due to ${\hat{z}_{i,t}^j}={\hat{z}_{i,t}}$, the first inequality holds due to the Jensen’s inequality, the second equality holds due to \eqref{Algorithm1-eq11}, 
and the last inequality holds due to \eqref{compre-eq1}.

From \eqref{Lemma7-proof-eq9}, we know that ${\mathbf{E}_\mathcal{C}}[ {\| {{z_{i,t}} - {{\hat z}_{i,t}^j}} \|} ] \le \sqrt C {s_t}$, i.e., the compression error is bounded by the decreasing sequence~$\{ {{s _t}} \}$. If we let $\{ {{s _t}} \}$ decrease to zero over iterations, then the compression error also decreases to zero.

\subsection{Performance Analysis}
In this section, we establish network regret and cumulative constraint violation bounds for Algorithm~1 without and with Slater’s condition, respectively. 
We choose some natural decreasing and increasing parameter sequences in the following theorem. 

\begin{theorem}\label{thm1}
Suppose Assumptions 1--5 hold. For all $i \in [ n ]$, let $\{ {{x_{i,t}}} \}$ be the sequences generated by Algorithm~1 with
\begin{flalign}
{\alpha _t} = \frac{{{\alpha _0}}}{{{t^{{\theta _1}}}}}, {\gamma _t} = {\gamma _0}{t^{{\theta _2}}}, {\xi _t} = \frac{1}{{{t^{{\theta _3}}}}}, 
{\delta _t} = \frac{{r( \mathbb{X} )}}{{{t^{{\theta _3}}}}}, {s_t} = \frac{{{s_0}}}{{{t^{{\theta _4}}}}}, \label{theorem1-eq1}
\end{flalign}
where ${\alpha _0} > 0$, ${\theta _1} \in ( {0,1} )$, ${\gamma _0} \in \big( {0,r{{( \mathbb{X} )}^2}/( {2{p^2}F_2^2} )} \big]$, ${\theta _2} \in ( {0,{\theta _1}/3} )$, ${\theta _3} \in \big( {{\theta _2},( {{\theta _1} - {\theta _2}} )/2} \big]$, 
${s_0} > 0$, and ${\theta _4} \ge 1$ are constants. Then, for any $T \in {\mathbb{N}_ + }$,
\begin{flalign}
\mathbf{E}[{{\rm{Net}\mbox{-}\rm{Reg}}( T )}] &= \mathcal{O}( {{T^{\max \{ {\theta _1},1 - {\theta _1} + 2{\theta _3},1 + {\theta _2} - {\theta _3} \}}}} ), \label{theorem1-eq2}\\
\mathbf{E}[{{\rm{Net} \mbox{-} \rm{CCV}}( T )}] &= \mathcal{O}( {T^{1 - {\theta _2}/2}} ). \label{theorem1-eq3}
\end{flalign}
Moreover, if Assumption~6 also holds, then
\begin{flalign}
\mathbf{E}[{{\rm{Net} \mbox{-} \rm{CCV}}( T )}] &= \mathcal{O}( {T^{1 - {\theta _2}}} ). \label{theorem1-eq4}
\end{flalign}
\end{theorem}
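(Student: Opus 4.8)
The plan is to combine four ingredients that are standard in distributed online primal--dual analysis but must be carefully reconciled here because of the three simultaneous complications: nonconvexity of $f_{i,t}$, one-point bandit gradient estimation, and compressed communication over a time-varying directed graph. First I would establish a ``consensus'' step: using Assumption~5 (uniform joint strong connectivity, doubly stochastic weights) together with the compression-error bound $\mathbf{E}_{\mathcal C}[\|z_{i,t}-\hat z_{i,t}\|]\le\sqrt{C}\,s_t$ from \eqref{Lemma7-proof-eq9}, I would show that all local decisions $x_{i,t}$ stay $\mathcal O(\alpha_t + s_t + \xi_t)$-close to their network average $\bar x_t:=\tfrac1n\sum_j x_{j,t}$, in expectation, summed over $t$. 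The decreasing scaling parameter $s_t=s_0/t^{\theta_4}$ with $\theta_4\ge1$ is exactly what makes $\sum_t s_t$ affect only lower-order terms; the geometric mixing constant coming from $B$ and $w$ in Assumption~5 gives the usual $\sum_t\alpha_t$-type bound on the consensus error. I expect these consensus estimates are proved in earlier lemmas (Lemmas~1--2 and the Lemma referenced near \eqref{Lemma7-proof-eq9}), so I would simply invoke them.

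Next I would set up the regret/CCV drift inequality. From the projection update \eqref{Algorithm1-eq6} and nonexpansiveness of $\mathcal P_{(1-\xi_{t+1})\mathbb X}$, for any fixed comparator $y\in(1-\xi_{t+1})\mathcal X_T$ I would expand $\|z_{i,t+1}-y\|^2\le\|x_{i,t}-\alpha_t a_{i,t+1}-y\|^2$, producing the telescoping term plus $2\alpha_t\langle a_{i,t+1},\,x_{i,t}-y\rangle$ plus $\alpha_t^2\|a_{i,t+1}\|^2$. The inner product splits via \eqref{Algorithm1-eq5} into a loss part $\langle\hat\nabla_1 f_{i,t}(x_{i,t}),x_{i,t}-y\rangle$ and a constraint part $\langle\hat\nabla_1 g_{i,t}(x_{i,t})v_{i,t+1},x_{i,t}-y\rangle$; the latter is handled by convexity of $g_{i,t}$ and the definition $v_{i,t+1}=\gamma_t[g_{i,t}(x_{i,t}+\delta_t u_{i,t})]_+$, which after taking $\mathbf E_u$ (bringing in the $\delta_t$-smoothing / Lipschitz bound \eqref{ass4-eq2}) yields a term of the form $\gamma_t\|[g_t(\bar x_t)]_+\|^2$ up to $\mathcal O(\gamma_t\delta_t+\gamma_t\cdot(\text{consensus}))$ corrections. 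For the loss part I would invoke Lemma~3 (cited in Remark~4) to replace $\hat\nabla_1 f_{i,t}(x_{i,t})$ by $\nabla f_{i,t}(x_{i,t})$ at the cost of an $\mathcal O(\delta_t + \xi_t + L\alpha_t\|a\|)$-type discrepancy, and also to pass from the contracted comparator $(1-\xi_{t+1})\mathcal X_T$ to $\mathcal X_T$ itself using $R(\mathbb X)$ from Assumption~1. Summing over $i$ and $t$, using $\|a_{i,t+1}\|\le G_1 p F_1/\delta_t + (G_2 p F_2/\delta_t)\gamma_t F_2$ (from Assumptions~2--3 and the estimator formulas) so that $\alpha_t^2\|a\|^2=\mathcal O(\alpha_t^2\gamma_t^2/\delta_t^2)$, and the telescoping of $\|z_{i,t}-y\|^2/\alpha_t$ (where monotonicity of $\{\alpha_t\}$ and $\{\gamma_t\}$ and the diameter bound control the boundary terms), gives the master inequality
\begin{flalign}
\nonumber
\mathbf E\Big[{\rm Net\text{-}Reg}(T)\Big] + \frac{1}{2}\sum_{t=1}^{T}\gamma_t\,\mathbf E\big[\|[g_t(\bar x_t)]_+\|^2\big]
\;\le\; \mathcal O\Big(\frac{1}{\alpha_T} + \sum_{t=1}^{T}\Big(\frac{\alpha_t\gamma_t^2}{\delta_t^2} + \gamma_t\delta_t + \frac{\gamma_t}{t^{\theta_3}} + \delta_t + \alpha_t\Big)\Big),
\end{flalign}
where the $\theta_3$-dependence packages the consensus and smoothing errors. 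Plugging in \eqref{theorem1-eq1} and using $\theta_2<\theta_1/3$, $\theta_3\in(\theta_2,(\theta_1-\theta_2)/2]$, $\theta_4\ge1$ makes the right-hand side $\mathcal O(T^{\max\{\theta_1,\,1-\theta_1+2\theta_3,\,1+\theta_2-\theta_3\}})$, which is \eqref{theorem1-eq2}.

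For the CCV bounds I would convert the per-iteration violation $\|[g_t(x_{i,t})]_+\|$ into $\|[g_t(\bar x_t)]_+\|$ up to the (summable-after-scaling) consensus error, and then need a lower bound on $\sum_t\gamma_t\|[g_t(\bar x_t)]_+\|^2$ from below, or equivalently an upper bound on $\sum_t\|[g_t(\bar x_t)]_+\|$. Without Slater I would use Cauchy--Schwarz: $\big(\sum_t\|[g_t(\bar x_t)]_+\|\big)^2\le\big(\sum_t 1/\gamma_t\big)\big(\sum_t\gamma_t\|[g_t(\bar x_t)]_+\|^2\big)$; bounding the second factor by the master inequality (after lower-bounding ${\rm Net\text{-}Reg}$ by $-\mathcal O(T^{\theta_1})$ via boundedness of $\nabla f_t$ and the diameter) and $\sum_t 1/\gamma_t=\mathcal O(T^{1-\theta_2})$ gives $\mathcal O(T^{1-\theta_2/2})$, which is \eqref{theorem1-eq3}. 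Under Slater (Assumption~7), I would instead plug the strictly feasible $x_s$ as comparator and exploit that $\langle\hat\nabla_1 g_{i,t}(x_{i,t})v_{i,t+1}, x_{i,t}-x_s\rangle$ now carries a strictly negative $-\varsigma_s\gamma_t\|[g_t(\bar x_t)]_+\|$ drift (by convexity, $g_{i,t}(x_s)\le-\varsigma_s$), yielding directly $\varsigma_s\sum_t\gamma_t\|[g_t(\bar x_t)]_+\| \le \mathcal O(1/\alpha_T + \text{lower order}) = \mathcal O(T^{\theta_1})$; since $\gamma_t$ is increasing, $\sum_t\|[g_t(\bar x_t)]_+\|\le\frac{1}{\gamma_1}\mathcal O(\ldots)$ is too crude, so instead I use $\sum_t\|[g_t]_+\|\le\frac{1}{\gamma_T}\sum_t\gamma_t\|[g_t]_+\|\cdot(\text{Abel summation})$ giving $\mathcal O(T^{\theta_1-\theta_2})$; optimizing the admissible range of $\theta_1$ against $\theta_3$ collapses this to $\mathcal O(T^{1-\theta_2})$, which is \eqref{theorem1-eq4}. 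The main obstacle I anticipate is the loss part: reconciling the nonconvex metric \eqref{regret-eq1} (which is linear in the comparator and uses $\nabla f_t(x_{i,t})$, not $f_t$ itself) with the one-point estimator — this is where Lemma~3 does the heavy lifting, and getting its error terms to land inside $\mathcal O(T^{1-\theta_1+2\theta_3})$ rather than something larger is the delicate parameter-balancing that forces the stated ranges on $\theta_1,\theta_2,\theta_3$.
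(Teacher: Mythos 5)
Your overall architecture matches the paper's: a consensus lemma for the time-varying graph plus the compression-error bound $\mathbf{E}_{\mathcal C}[\|z_{i,t}-\hat z_{i,t}\|]\le\sqrt{C}s_t$, a per-iteration drift inequality from the projection update, Cauchy--Schwarz/H\"older to convert the weighted squared violation into ${\rm Net\mbox{-}CCV}$, and the strictly feasible comparator $x_s$ to get the reduced bound under Slater's condition. However, there is one genuine gap that, as stated, breaks both \eqref{theorem1-eq2} and \eqref{theorem1-eq3}: your treatment of the dual-weighted term in the master inequality. You bound $\|a_{i,t+1}\|$ by $pF_1/\delta_t+(pF_2/\delta_t)\|v_{i,t+1}\|$ and then use the crude estimate $\|v_{i,t+1}\|\le F_2\gamma_t$, producing a right-hand-side term $\mathcal O\big(\sum_t\alpha_t\gamma_t^2/\delta_t^2\big)=\mathcal O\big(T^{1-\theta_1+2\theta_2+2\theta_3}\big)$. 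For $\theta_3$ near its allowed upper end $(\theta_1-\theta_2)/2$ this exponent equals $1+\theta_2>1$, i.e.\ the bound is superlinear and hence vacuous; it also exceeds every exponent in the claimed $\max\{\theta_1,\,1-\theta_1+2\theta_3,\,1+\theta_2-\theta_3\}$, and it propagates into the CCV bound through your Cauchy--Schwarz step. The paper avoids this by \emph{not} discarding the structure of $v_{i,t+1}=\gamma_t[g_{i,t}(e_{i,t})]_+$: the term $\tfrac{2p^2F_2^2\alpha_t}{\delta_t^2}\|v_{i,t+1}\|^2$ is absorbed into the negative drift $-v_{i,t+1}^Tg_{i,t}(x_{i,t})\approx-\gamma_t\|[g_{i,t}(e_{i,t})]_+\|^2$ using precisely the condition $\alpha_t\gamma_t\le\delta_t^2/(2p^2F_2^2)$ (this is why $\gamma_0\le r(\mathbb X)^2/(2p^2F_2^2)$ and $\theta_3\le(\theta_1-\theta_2)/2$ appear in the hypotheses), leaving only the benign residual $F_2G_2\gamma_t\delta_t\sim t^{\theta_2-\theta_3}$, whose sum is the $T^{1+\theta_2-\theta_3}$ entry of the max. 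Your proposal never invokes this cancellation, and without it the stated parameter ranges do not yield the theorem.

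Two smaller issues in the same direction. First, you claim the regret can be lower-bounded by $-\mathcal O(T^{\theta_1})$; by Assumption~3 and the diameter bound each term $\langle\nabla f_t(x_{i,t}),x_{i,t}-y\rangle$ is only bounded by $2G_1R(\mathbb X)$ in absolute value, so the correct lower bound is $-\mathcal O(T)$ (this is what the paper uses in its $\gamma_t$-normalized version, contributing $\sum_t 1/\gamma_t=\mathcal O(T^{1-\theta_2})$ to $\Psi_T$); with $-\mathcal O(T)$ your Cauchy--Schwarz route still recovers $\mathcal O(T^{1-\theta_2/2})$, but your stated intermediate bound is wrong. Second, in the Slater case the correct premise is $\varsigma_s\sum_t\gamma_t\mathbf E[\|[g_t]_+\|]\le\mathcal O(T)$, not $\mathcal O(T^{\theta_1})$; your Abel-summation step then gives $\mathcal O(T^{1-\theta_2})$ directly, and the concluding remark about ``optimizing $\theta_1$ against $\theta_3$ to collapse to $\mathcal O(T^{1-\theta_2})$'' is not needed (the paper instead works with the $\gamma_t$-normalized quantity $\Lambda_T(x_s)$, in which the $\gamma_t$ cancels against $v_{i,t+1}$, giving $2\varsigma_s\sum_{i,t}\mathbf E[\|[g_{i,t}(e_{i,t})]_+\|]\le\Psi_T=\mathcal O(T^{1-\theta_2})$ in one line).
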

\begin{proof}
The proof and the explicit expressions of the right-hand sides of \eqref{theorem1-eq2}–\eqref{theorem1-eq4} are given in Appendix B.
\end{proof}
\begin{remark}\label{rem2}
We show in Theorem~1 that Algorithm~1 establishes sublinear network regret and cumulative constraint violation bounds as in \eqref{theorem1-eq2}--\eqref{theorem1-eq3}, thereby answering {Questions~1--3} raised in Section~1.  
In addition, when Slater’s condition holds, the network cumulative constraint violation bound is further reduced as in \eqref{theorem1-eq4}. 
That answers {Question~4} raised in Section~1.
In these bounds of \eqref{theorem1-eq2}--\eqref{theorem1-eq4}, ${\theta _2}$ and ${\theta _3}$ capture the impact of inequality constraints and one-point bandit feedback on the network regret and cumulative constraint violation bounds, respectively.
The exponents governing the growth of these bounds with respect to $T$ are unaffected by compressed communication since the compression error is effectively suppressed when ${\theta _4} \ge 1$.
If we choose ${\theta _1} = 1 - {\theta _1} + 2{\theta _3} = 1 + {\theta _2} - {\theta _3}$, 
then we obtain ${\theta _1} \in ( {3/4,5/6} ]$, ${\theta _2} = 2{\theta _1} - 3/2$ and ${\theta _3} = {\theta _1} - 1/2$.
Therefore, the bounds in \eqref{theorem1-eq2}--\eqref{theorem1-eq4} can be rewritten as $\mathcal{O}( {{T^{\theta _1}}} )$, $\mathcal{O}( {T^{7/4 - {\theta _1}}} )$, $\mathcal{O}( {T^{5/2 - 2{\theta _1}}} )$, respectively.
In such a case,
as the user-defined trade-off parameter ${\theta _1}$ increases, the network regret bound for Algorithm~1 becomes larger, while the network cumulative constraint violation bounds become smaller, both with and without Slater’s condition.
If inequality constraints are not considered, it is straightforward to observe that Theorem~1 establishes an $\mathcal{O}( {{T^{3/4}} } )$ network regret bound, 
which is the same as the results established by the compressed distributed online algorithms with one-point bandit feedback in \cite{Tu2022, Ge2023}.
\end{remark}

\section{Compressed Distributed Online Primal--Dual Algorithm \\with Two-Point Bandit Feedback}
This section proposes a two-point bandit feedback version of the proposed algorithm in Section~III, 
and analyzes the performance of the algorithm without and with Slater’s condition, respectively.
\subsection{Algorithm Description}
In the two-point bandit feedback setting, the values of the local loss and constraint functions at two points are revealed at each iteration. 
Inspired by the two-point gradient estimator proposed in \cite{Agarwal2010, Shamir2017}, we estimate the gradients $\nabla {f_{i,t}}({x_{i,t}})$ and $\nabla {g_{i,t}}({x_{i,t}})$ by
\begin{flalign}
\nonumber
\hat \nabla_2 {f_{i,t}}( {{x_{i,t}}} ) &= \frac{p}{{{\delta _t}}}\big( {{f_{i,t}}( {{x_{i,t}} + {\delta _t}{u_{i,t}}} ) - {f_{i,t}}( {{x_{i,t}}} )} \big){u_{i,t}} \in {\mathbb{R}^p}, \\
\nonumber
\hat \nabla_2 {{g_{i,t}}( {{x_{i,t}}} )} &= \frac{p}{{{\delta _t}}}{\big( {{{{g_{i,t}}( {{x_{i,t}} + {\delta _t}{u_{i,t}}} )} } - { {{g_{i,t}}( {{x_{i,t}}} )} }} \big)^T}
\otimes {u_{i,t}} \in {\mathbb{R}^{p \times {m_i}}}.
\end{flalign}
In this way, Algorithm~1 can be modified into a two-point bandit feedback version, which is outlined in pseudo-code as Algorithm~2.
\begin{algorithm}[!t]
  \caption{Compressed Distributed Online Primal--Dual Algorithm with Two-Point Bandit Feedback} 
  \begin{algorithmic}
  \renewcommand{\algorithmicrequire}{\textbf{Input:}}
  \REQUIRE
   positive ${r( \mathbb{X} )}$, nonincreasing sequences $\{ {\alpha _t}\} \subseteq ( {0, + \infty })$, 
   $\{ {\gamma _t}\} \subseteq ( {0, + \infty })$, 
   $\{ {{\xi _t}} \} \subseteq ( {0,1} )$, and $\{ {{\delta _t}} \} \subseteq ( {0,r( \mathbb{X} ){\xi _t}} ]$,
   decreasing sequence $\{ {{s _t}} \} \subseteq ( {0, + \infty })$.
  \renewcommand{\algorithmicrequire}{\textbf{Initialize:}}
  \REQUIRE
      ${\hat{z}_{j,0}^i} = {\hat{z}_{j,0}} = {\mathbf{0}_p}$ for $i \in [ n ]$ and $j \in [ n ]$, and ${z_{i,1}} \in {(1-\xi_{1})\mathbb{X}}$, $\forall i \in [ n ]$, ${\cal N}_i^{{\rm{out}}}({{\cal G}_0})={\cal N}_i^{{\rm{out}}}({{\cal G}_1})$, ${\cal N}_i^{{\rm{in}}}({{\cal G}_0})={\cal N}_i^{{\rm{in}}}({{\cal G}_1})$.
    \FOR {$t = 1, \cdot  \cdot  \cdot, T-1 $}
    \FOR {$i = 1,\cdot  \cdot  \cdot,n$ in parallel}
    \STATE Send $\mathcal{C}\big( {( {{z_{i,t}} - {\hat{z}_{i,t-1}}} )/{s_{t}}} \big)$ to agent $j$ if $j \in \big( {{\cal N}_i^{{\rm{out}}}({{\cal G}_t}) \cap {\cal N}_i^{{\rm{out}}}({{\cal G}_{t - 1}})} \big)$ or send ${\hat{z}_{i,t}}$ to agent $j$ if $j \in \big( {{\cal N}_i^{{\rm{out}}}({{\cal G}_t})\backslash {\cal N}_i^{{\rm{out}}}({{\cal G}_{t - 1}})} \big)$, receive $\mathcal{C}\big( {( {{z_{j,t}} - {\hat{z}_{j,t-1}}} )/{s_{t}}} \big)$ from agent $j$ if $j \in \big( {{\cal N}_i^{{\rm{in}}}({{\cal G}_t}) \cap {\cal N}_i^{{\rm{in}}}({{\cal G}_{t - 1}})} \big)$ or receive ${\hat{z}_{j,t}}$ from agent $j$ if $j \in \big( {{\cal N}_i^{{\rm{in}}}({{\cal G}_t})\backslash {\cal N}_i^{{\rm{in}}}({{\cal G}_{t - 1}})} \big)$.
    \STATE Update ${x_{i,t}}$ according to \eqref{Algorithm1-eq2}--\eqref{Algorithm1-eq11}.
    \STATE Select vector ${u_{i,t}} \in {\mathbb{S}^p}$ independently and uniformly at random.
    \STATE Observe ${{f_{i,t}}( {{x_{i,t}}} )}$, ${{f_{i,t}}( {{x_{i,t}} + {\delta _t}{u_{i,t}}} )}$, ${{ {{g_{i,t}}( {{x_{i,t}}} )} }}$, ${{ {{g_{i,t}}( {{x_{i,t}} + {\delta _t}{u_{i,t}}} )}  }}$.
    \STATE Update
    \begin{subequations}
     \begin{flalign}
       {v_{i,t + 1}} &= {\gamma _t}{[ {{g_{i,t}}( {{x_{i,t}}} )} ]_ + }, \label{Algorithm2-eq3} \\
       {b _{i,t + 1}} &= {\hat{\nabla}}_2 {f_{i,t}}({x_{i,t}}) + {\hat{\nabla}}_2 {{g_{i,t}}({x_{i,t}}) }{v_{i,t+1}}, \label{Algorithm2-eq4} \\
       {z_{i,t + 1}} &= {\mathcal{P}_{( {1 - {\xi _{t + 1}}} )\mathbb{X}}}( {{x_{i,t}} - {\alpha _t}{b_{i,t + 1}}} ). \label{Algorithm2-eq5} 
      \end{flalign}
     \end{subequations}
    \ENDFOR
    \ENDFOR
  \renewcommand{\algorithmicensure}{\textbf{Output:}}
  \ENSURE
      $\{ x_{i,t} \}$.
  \end{algorithmic}
\end{algorithm}

\subsection{Performance Analysis}
Similar to Section~III.B, in this section we establish network regret and cumulative constraint violation bounds for Algorithm~2 without and with Slater’s condition, respectively. 
We choose some natural decreasing and increasing parameter sequences in the following theorem. 

\begin{theorem}\label{thm2}
Suppose Assumptions 1--5 hold. For all $i \in [ n ]$, let $\{ {{x_{i,t}}} \}$ be the sequences generated by Algorithm~2 with
\begin{flalign}
{\alpha _t} = \frac{{\alpha _0}}{{{t^{{\theta _1}}}}}, {\gamma _t} = \frac{{{\gamma _0}}}{{{\alpha _t}}}, {\xi _t} = {\alpha _t}, 
{\delta _t} = r( \mathbb{X} ){\alpha _t}, {s_t} = \frac{{s_0}}{{{t^{{\theta _2}}}}}, \label{theorem2-eq1}
\end{flalign}
where ${\alpha _0} > 0$, ${\theta _1} \in ( {0,1} )$, ${\gamma _0} \in ( {0,1/( {4(p^2+1)G_2^2} )} ]$, ${s_0} > 0$, and ${\theta _2} \ge 1$ are constants. Then, for any $T \in {\mathbb{N}_ + }$,
\begin{flalign}
\mathbf{E}[{{\rm{Net}\mbox{-}\rm{Reg}}( T )}] &= \mathcal{O}( {{T^{\max \{ {1 - {\theta _1},{\theta _1}} \}}}} ), \label{theorem2-eq2}\\
\mathbf{E}[{{\rm{Net} \mbox{-} \rm{CCV}}( T )}] &= \mathcal{O}( {T^{1 - {\theta _1}/2}} ). \label{theorem2-eq3}
\end{flalign}
Moreover, if Assumption~6 also holds, then
\begin{flalign}
\mathbf{E}[{{\rm{Net} \mbox{-} \rm{CCV}}( T )}] &= \mathcal{O}( {T^{1 - {\theta _1}}} ). \label{theorem2-eq4}
\end{flalign}
\end{theorem}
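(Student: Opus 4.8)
The plan is to follow the structure of the proof of Theorem~\ref{thm1} in Appendix~B, exploiting the one structural advantage of two-point feedback: by Assumption~3, $\|f_{i,t}(x_{i,t}+\delta_t u_{i,t})-f_{i,t}(x_{i,t})\|\le G_1\delta_t$ and $\|g_{i,t}(x_{i,t}+\delta_t u_{i,t})-g_{i,t}(x_{i,t})\|\le G_2\delta_t$, so that $\|\hat\nabla_2 f_{i,t}(x_{i,t})\|\le pG_1$ and $\|\hat\nabla_2 g_{i,t}(x_{i,t})\|\le pG_2$ uniformly, without the $1/\delta_t$ factor present in the one-point estimator. This is precisely why Assumption~2 can be dropped (cf.~Remark~\ref{rem1}) and why $\theta_1$ may range over all of $(0,1)$. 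First I would record the estimator facts: conditioned on the history, $\mathbf{E}_{u_{i,t}}[\hat\nabla_2 f_{i,t}(x_{i,t})]=\nabla\tilde f_{i,t}(x_{i,t})$ and $\mathbf{E}_{u_{i,t}}[\hat\nabla_2 g_{i,t}(x_{i,t})]=\nabla\tilde g_{i,t}(x_{i,t})$, where $\tilde f_{i,t}$ and $\tilde g_{i,t}$ are the $\delta_t$-ball smoothings of $f_{i,t}$ and $g_{i,t}$; by Assumption~4, $\|\nabla\tilde f_{i,t}(x)-\nabla f_{i,t}(x)\|\le L\delta_t$; and by \eqref{ass4-eq2}, $\tilde g_{i,t}$ is convex with $\|\tilde g_{i,t}(x)-g_{i,t}(x)\|\le G_2\delta_t$ and $\nabla\tilde g_{i,t}(x)$ a subgradient. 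Since $\delta_t=r(\mathbb{X})\alpha_t$, all smoothing-bias contributions accumulate to $\mathcal{O}(T^{1-\theta_1})$.

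Second, I would establish the network estimates. Using the double stochasticity of $W_t$ and the $B$-step contraction implied by Assumption~5, together with the compression bound $\mathbf{E}_\mathcal{C}[\|z_{i,t}-\hat z_{i,t}\|]\le\sqrt{C}\,s_t$ from \eqref{Lemma7-proof-eq9} and the uniform bound $\|b_{i,t+1}\|\le p(G_1+G_2\|v_{i,t+1}\|)$ read off from \eqref{Algorithm2-eq4}, I would show that $\mathbf{E}[\|x_{i,t}-\bar x_t\|]$ and $\mathbf{E}[\|x_{i,t}-x_{j,t}\|]$, where $\bar x_t:=\frac1n\sum_{i=1}^n x_{i,t}$, are bounded by geometrically weighted sums of the stepsize, compression and perturbation inputs; since $\theta_2\ge1$ makes $\{s_t\}$ essentially summable, the compression contribution is of lower order, and, as stressed after \eqref{Lemma7-proof-eq9}, the final bounds are unaffected by compression.

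Third, I would derive the per-iteration primal drift. By non-expansiveness of the projection in \eqref{Algorithm2-eq5}, for any $x\in\mathcal{X}_T$ (so $(1-\xi_{t+1})x\in(1-\xi_{t+1})\mathbb{X}$),
\begin{flalign}
\nonumber
\|z_{i,t+1}-(1-\xi_{t+1})x\|^2 &\le \|x_{i,t}-(1-\xi_{t+1})x\|^2 -2\alpha_t\big\langle b_{i,t+1},\,x_{i,t}-(1-\xi_{t+1})x\big\rangle \\
\nonumber
&\quad +\alpha_t^2\|b_{i,t+1}\|^2.
\end{flalign}
Averaging over $i$, using column stochasticity of $W_t$ to identify $\frac1n\sum_i x_{i,t}$ with $\frac1n\sum_i\hat z_{i,t}$, converting the averages into $\|\bar x_{t+1}-(1-\xi_{t+1})x\|^2$ and $\|\bar x_t-(1-\xi_t)x\|^2$ up to the consensus, shrinkage and $\mathcal{O}(s_t)$ corrections from Step~2, and then taking the conditional expectation over $u_{i,t}$, I would treat the two parts of $b_{i,t+1}$ separately. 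For the loss part, I would replace $\hat\nabla_2 f_{i,t}(x_{i,t})$ by $\nabla f_{i,t}(x_{i,t})$ up to the $\mathcal{O}(L\delta_t)$ smoothing bias and then use Assumption~4 with the consensus bounds to pass from $\frac1n\sum_i\nabla f_{i,t}(x_{i,t})$ to the network quantity $\nabla f_t(x_{i,t})$ at each $x_{i,t}$, reconstructing the variational-inequality term $\langle\nabla f_t(x_{i,t}),x_{i,t}-x\rangle$ of \eqref{regret-eq1} --- no convexity of $f_{i,t}$ is invoked here, which is precisely the point of this regret metric. For the constraint part, I would use the convexity of the smoothed $\tilde g_{i,t}$ together with $v_{i,t+1}=\gamma_t[g_{i,t}(x_{i,t})]_+$ from \eqref{Algorithm2-eq3} and $v_{i,t+1}^T g_{i,t}(x)\le0$ for $x\in\mathcal{X}_T$ to extract the nonnegative penalty $\gamma_t\|[g_{i,t}(x_{i,t})]_+\|^2$, modulo an $\mathcal{O}(\delta_t\|v_{i,t+1}\|_1)$ smoothing-gap term. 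Summing over $t$, telescoping (the leftover boundary term being $\mathcal{O}(1/\alpha_T)=\mathcal{O}(T^{\theta_1})$), noting that $\alpha_t\gamma_t=\gamma_0$ and that $\gamma_0\le1/(4(p^2+1)G_2^2)$ makes the $\alpha_t^2\|b_{i,t+1}\|^2$ contribution of the dual part dominated by the negative penalty, and using $1/\alpha_T=\mathcal{O}(T^{\theta_1})$, $\sum_t\alpha_t=\sum_t\delta_t=\mathcal{O}(T^{1-\theta_1})$, $\xi_t=\alpha_t$, and the lower-order compression terms, yields \eqref{theorem2-eq2}; isolating instead the accumulated penalty gives $\sum_{i=1}^n\sum_{t=1}^T\mathbf{E}[\|[g_{i,t}(x_{i,t})]_+\|^2]\le\Psi_T$ with $\Psi_T=\mathcal{O}(T^{1-\theta_1})$.

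Finally, for \eqref{theorem2-eq3} I would pass from $g_{i,t}(x_{i,t})$ to $g_t(x_{i,t})$ via the consensus bound and \eqref{ass4-eq2}, and then apply the H\"{o}lder/Jensen step of \eqref{Lemma9-proof-ii-eq11} to conclude that the network cumulative constraint violation is $\mathcal{O}(\sqrt{T\Psi_T})=\mathcal{O}(T^{1-\theta_1/2})$. For \eqref{theorem2-eq4}, I would instead take $x=x_s$ in the drift and invoke Slater's condition \eqref{ass8-eq1}: since $v_{i,t+1}^T g_{i,t}(x_s)\le-\varsigma_s\|v_{i,t+1}\|_1\le-\varsigma_s\gamma_t\|[g_{i,t}(x_{i,t})]_+\|$ and $\alpha_t\gamma_t=\gamma_0$, this produces a term $2\varsigma_s\gamma_0\|[g_{i,t}(x_{i,t})]_+\|$ with a \emph{constant} coefficient (dominating the $\mathcal{O}(\alpha_t\|[g_{i,t}(x_{i,t})]_+\|)$ bias cross-terms for all large $t$), so summing gives $\sum_{i=1}^n\sum_{t=1}^T\mathbf{E}[\|[g_{i,t}(x_{i,t})]_+\|]=\mathcal{O}(\Psi_T)=\mathcal{O}(T^{1-\theta_1})$, and passing again to $g_t(x_{i,t})$ yields \eqref{theorem2-eq4}. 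I expect the main obstacle to be the interplay of nonconvexity and bandit feedback in the regret analysis: the metric \eqref{regret-eq1} demands control of $\langle\nabla f_t(x_{i,t}),x_{i,t}-x\rangle$ --- a local average of \emph{exact} agent gradients evaluated at the algorithm's own iterates --- whereas the algorithm only manipulates the biased single-agent estimator $\hat\nabla_2 f_{i,t}(x_{i,t})$; bridging this gap requires simultaneously controlling the smoothing bias $\mathcal{O}(\delta_t)$, the consensus error, the shrinkage correction $\mathcal{O}(\xi_t)$, and the compression memory, uniformly over all comparators $x\in\mathcal{X}_T$, and checking that each is $\mathcal{O}(T^{\max\{\theta_1,1-\theta_1\}})$ under the schedule \eqref{theorem2-eq1}. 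The compression part itself is comparatively benign because $\theta_2\ge1$ renders $\{s_t\}$ summable.
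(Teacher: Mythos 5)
Your proposal is correct and follows essentially the same route as the paper's Appendix~C (Lemmas~10--12): the uniform bounds $\|\hat\nabla_2 f_{i,t}\|\le pG_1$, $\|\hat\nabla_2 g_{i,t}\|\le pG_2$, the three-point projection inequality with telescoping, the choice $\alpha_t\gamma_t=\gamma_0\le 1/(4(p^2+1)G_2^2)$ to absorb the quadratic dual terms, the H\"older/Jensen step for \eqref{theorem2-eq3}, and the Slater lower bound $v_{i,t+1}^Tg_{i,t}(x_s)\le-\varsigma_s\gamma_t\|[g_{i,t}(x_{i,t})]_+\|$ for \eqref{theorem2-eq4}. The only cosmetic difference is that you telescope via the network average $\bar x_t$ while the paper keeps per-agent terms and uses convexity of $\|\cdot\|^2$ with the doubly stochastic $W_{t+1}$; both are handled by the same consensus and compression-error estimates.
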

\begin{proof}
The proof and the explicit expressions of the right-hand sides of \eqref{theorem2-eq2}–\eqref{theorem2-eq4} are given in Appendix C.
\end{proof}
\begin{remark}\label{rem4}
We show in Theorem~2 that Algorithm~2 achieves the reduced network regret bounds compared with the results established in Theorem~1 when ${\theta _1} \in [ {1/4,3/4} ]$.
Moreover, for all ${\theta _1} \in ( {0,1} )$, 
Algorithm~2 achieves strictly tighter network cumulative constraint violation bounds than the results established in Theorem~1, both without and with Slater’s condition.
For ${\theta _1} \in ( {0,1/2} ]$, as the user-defined trade-off parameter ${\theta _1}$ increases, the network regret and cumulative constraint violation bounds for Algorithm~2 become smaller, both with and without Slater’s condition. For ${\theta _1} \in ( {1/2,1} )$, as ${\theta _1}$ increases, the network regret bound becomes larger, while the network cumulative constraint violation bounds become smaller, both with and without Slater’s condition.
These bounds are the same as the state-of-the-art results established by the distributed bandit online algorithms with perfect communication in \cite{Yi2023, Zhang2024b}. 
When ${\theta _1} = 1/2$, the bounds in \eqref{theorem2-eq2}--\eqref{theorem2-eq4} becomes $\mathcal{O}( {\sqrt T } )$, $\mathcal{O}( {T^{3/4}} )$ and $\mathcal{O}( {\sqrt T } )$, respectively. 
In such a case, the network regret bound is the same as the results established in \cite{Cao2023a, Tu2022} for distributed online convex optimization without inequality constraints and the result established in \cite{Lu2021a} for distributed online nonconvex optimization without inequality constraints.
Moreover, the $\mathcal{O}( {\sqrt T } )$ network cumulative constraint violation bound is tighter than the $\mathcal{O}( {{T^{3/4}}} )$ bound established in \cite{Suo2025} for distributed online nonconvex optimization with inequality constraints under Slater’s condition.
Note that \cite{Suo2025} considers the full-information feedback setting and static inequality constraints, and use constraint violation metric, while we consider the two-point bandit feedback setting and time-varying inequality constraints, and uses the stricter cumulative constraint violation metric and compressed communication.
\end{remark}

\section{Simulation Example}
To evaluate the performance of Algorithms~1 and~2, we consider a distributed
online localization problem with time-varying constraints over a network of $n$ sensors as follows: 
\begin{subequations}
\begin{flalign}
&\mathop {\min }\limits_x \;\;\;\sum\limits_{t = 1}^T {\sum\limits_{i = 1}^n {\frac{1}{4}} } {\big| {{{\| {{S_i} - x} \|}^2} - {D_{i,t}}} \big|^2}, \label{example1-1}\\ 
&\;{\rm{s}}{\rm{.t}}{\rm{.   }}\;\;\;\;x \in \mathbb{X},{\rm{ }}{B_{i,t}}x - {b_{i,t}} \le {\mathbf{0}_{{m_i}}},\forall i \in [ n ],\forall t \in [ T ]. \label{example1-2}
\end{flalign} \label{example1}%
\end{subequations}
In this problem, the sensors aim to cooperatively track a moving target. The position of sensor~$i$ is denoted by $S_i \in {\mathbb{R}^p}$. Sensor~$i$ measures the distance between the positions of the target and itself by ${D_{i,t}} = {\| {{S_i} - {X_{0,t}}} \|^2} + {\tau _{i,t}}$ where ${X_{0,t}} \in {\mathbb{R}^p}$ and ${\tau _{i,t}} \in \mathbb{R}$ denote the position of the target and the measurement noise at iteration $t$, respectively. Moreover, each agent only has access to its distance measurement. In addition, the position of the target is required to remain within a designated safe region characterized by the set $\mathbb{X}$, and avoid prolonged deviations from the mission region characterized by time-varying linear inequality constraint ${B_{i,t}}x - {b_{i,t}} \le {\mathbf{0}_{{m_i}}}$ with ${B_{i,t}} \in {\mathbb{R}^{{m_i} \times p}}$ and ${b_{i,t}} \in {\mathbb{R}^{{m_i}}}$. However, occasional deviations are permitted to accommodate obstacle avoidance and exploration requirements. 
Inspired by \cite{Chen2012, DiLorenzo2016, Lu2023a}, the problem as in \eqref{example1} is formulated to achieve the least squares estimator for the position of the target.  
We set $n  = 100$, then the communication topology is modeled by a time-varying directed graph. 
Specifically, at each iteration~$t$, the graph is first randomly generated where the probability of any two sensors being connected is $\rho $. 
Then,
we add directed edges~$( {i,i + 1} )$ for $i \in [ 99 ]$ and directed edge~$( {100,1} )$. Moreover, let
${[ {{W_t}} ]_{ij}} = \frac{1}{100}$ if $( {j,i} ) \in {\mathcal{E}_t}$ and ${[ {{W_t}} ]_{ii}} = 1 - \sum\nolimits_{j = 1}^{100} {{{[ {{W_t}} ]}_{ij}}} $.
We can check the graph satisfies Assumption~4.

In this paper, we show in Theorems 1 and 2 that Algorithms 1 and 2 achieve sublinear bounds on both network regret and cumulative constraint violation 
for distributed online nonconvex optimization with time-varying constraints.
More importantly, under Slater’s condition, the network cumulative constraint violation bounds are further reduced.
To the best of our knowledge, such reductions under Slater’s condition have only been reported in \cite{Yi2024, Zhang2024b} 
where state-of-the-art results are established for distributed online convex optimization with time-varying constraints and perfect communication is considered.
Therefore, we compare Algorithms~1 and~2 with the algorithms in \cite{Yi2024, Zhang2024b}. 
Note that \cite{Yi2024} considers the full-information feedback setting and \cite{Zhang2024b} considers the two-point bandit feedback setting.
We set $\rho  = 0.1$, $\mathbb{X} = {[ { - 5,5} ]^p}$, $p = 2$, ${m_i} = 2$, 
and randomly choose each component of ${{S_i}}$ from the uniform distribution in the interval $[ { - 5,5} ]$. We assume that the position of the target evolves by
\begin{flalign}
\nonumber
{X_{0,t + 1}} = {X_{0,t}} + \left[ {\begin{array}{*{20}{c}}
{\frac{{{{( { - 1} )}^{{Q_t}}}\sin ( {t/50} )}}{{10t}}}\\
{\frac{{ - {Q_t}\cos ( {t/70} )}}{{40t}}}
\end{array}} \right],
\end{flalign}
where ${{Q_t}}$ is randomly generated from Bernoulli distribution with a success probability of $0.5$, and ${X_{0,0}} = {[ {0.8,0.95} ]^T}$. Moreover, ${\tau _{i,t}}$ is randomly generated from the uniform distribution from in the interval $[ { 0,0.001} ]$. Furthermore, each component of ${B_{i,t}}$ is randomly generated from the uniform distribution in the interval $[ {0,2} ]$, and each component of ${b_{i,t}}$ is randomly generated from the uniform distribution in the interval $[ {b,b+1} ]$. Here we choose $b=0.01$ such that Slater’s condition holds.
In addition, we select the following compressor for Algorithms~1 and~2.

{\bf{Standard uniform quantizer}}:
\begin{flalign}
\nonumber
\mathcal{C}( x ) = \Delta \Big\lfloor {\frac{x}{\Delta } + \frac{{{\mathbf{1}_p}}}{2}} \Big\rfloor,
\end{flalign}
where $\Delta $ is a positive integer. This compressor satisfies Assumption~5 with $C = p{\Delta ^2}/4$, 
which is also used in \cite{Yi2022, Ge2023, Li2017, Khirirat2021}. Transmitting $\mathcal{C}( x )$ requires $pq$ bits if each integer is encoded using $q$ bits. 
Here we set $\Delta =1$ and $q=8$.

\begin{figure}[!ht]
 \centering
  \includegraphics[width=6cm]{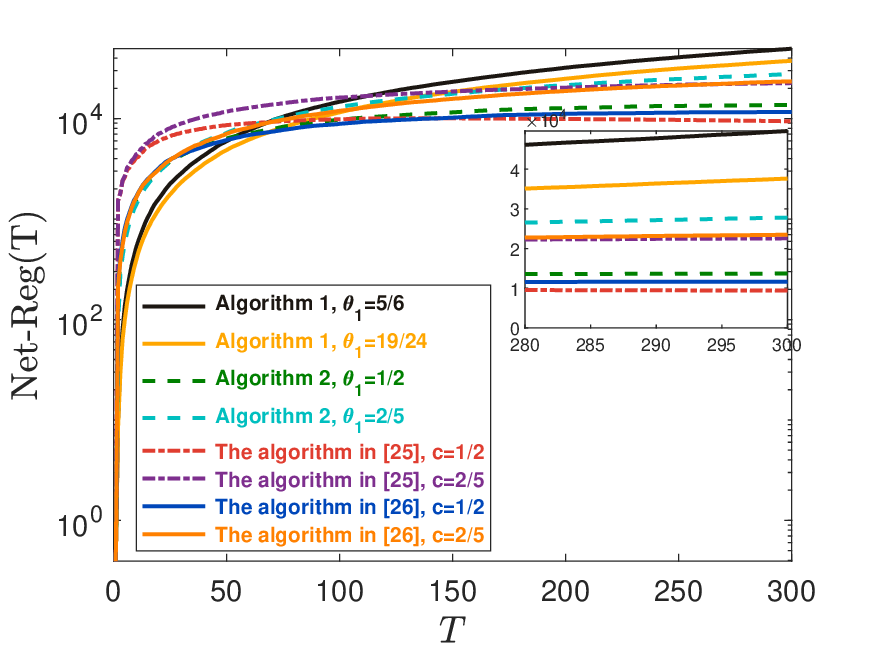}
  \caption{Evolutions of network regret under different trade-off parameters.}
\end{figure}
\begin{figure}[!ht]
  \centering
  \includegraphics[width=6cm]{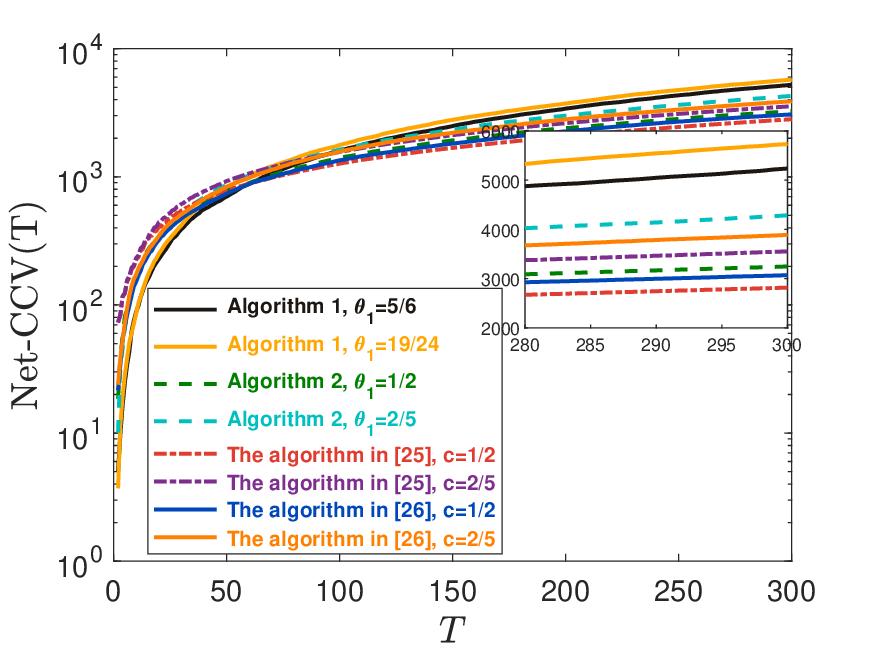}
  \caption{Evolutions of network cumulative constraint violation under different trade-off parameters.}
\end{figure}
\begin{figure}[!ht]
 \centering
  \includegraphics[width=6cm]{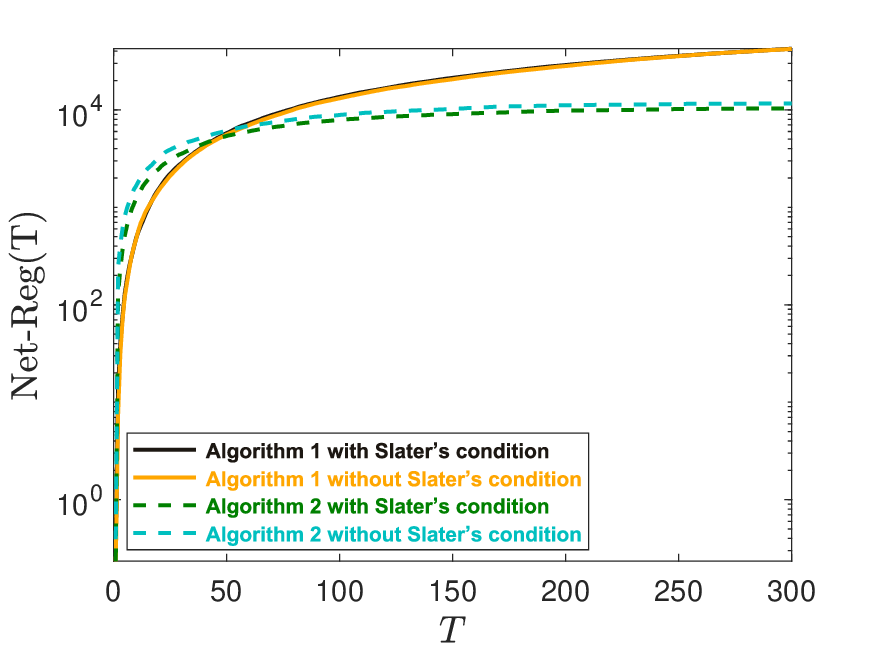}
  \caption{Evolutions of network regret with and without Slater’s condition.}
\end{figure}
\begin{figure}[!ht]
  \centering
  \includegraphics[width=6cm]{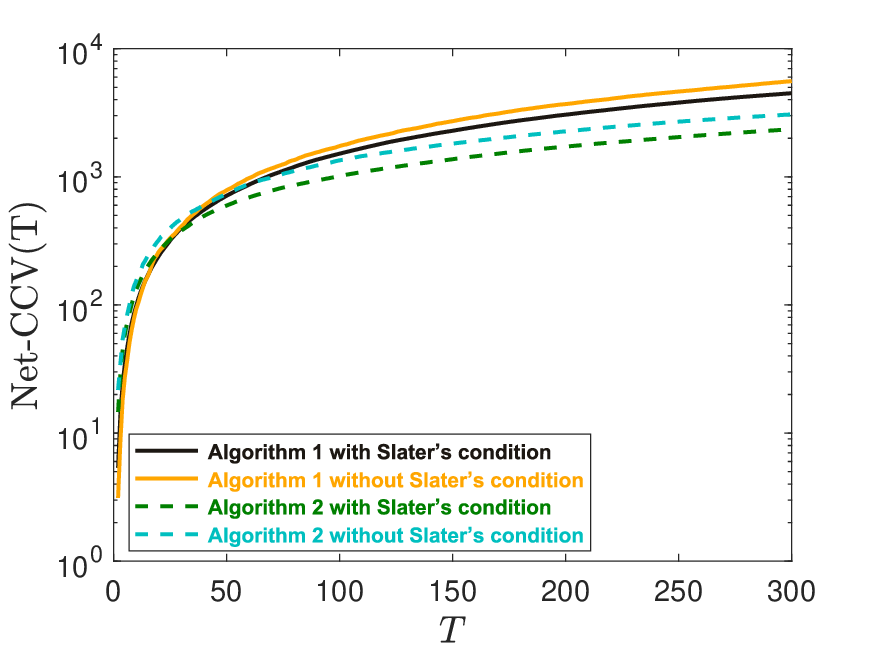}
  \caption{Evolutions of network cumulative constraint violation with and without Slater’s condition.}
\end{figure}
\begin{figure}[!ht]
 \centering
  \includegraphics[width=6cm]{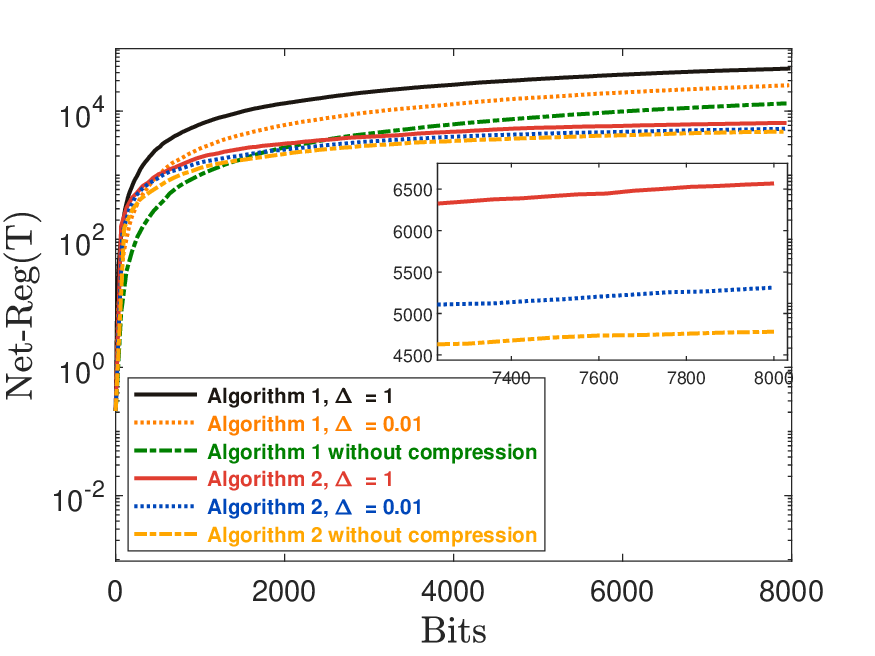}
  \caption{Evolutions of network regret under different quantization levels.}
\end{figure}
\begin{figure}[!ht]
  \centering
  \includegraphics[width=6cm]{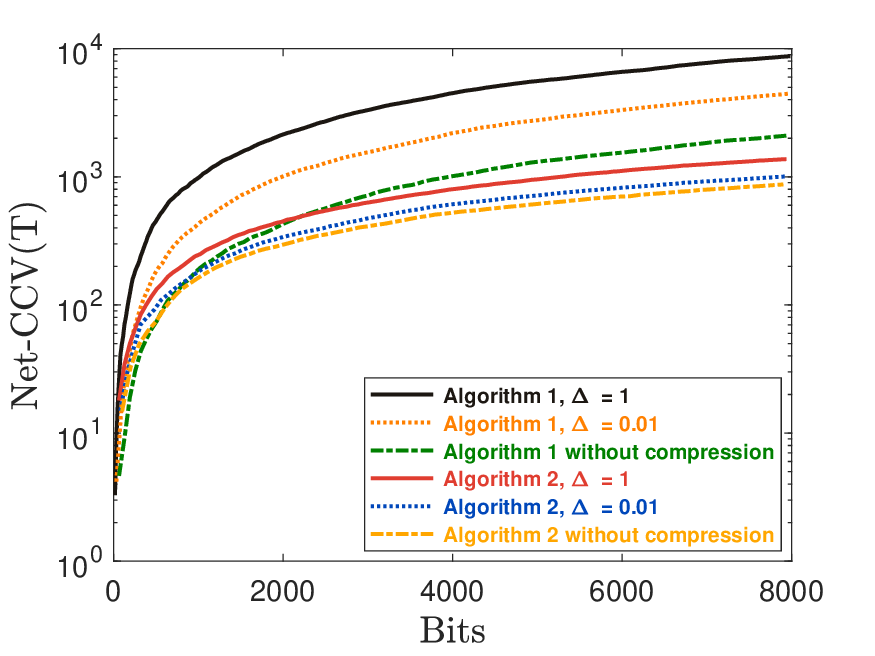}
  \caption{Evolutions of network cumulative constraint violation under different quantization levels.}
\end{figure}
Figs.~1 and 2 illustrate the evolutions of network regret and cumulative constraint violation under different trade-off parameters, respectively. 
As shown in Figs.~1 and 2, our Algorithm~2 exhibits slightly larger network regret and cumulative constraint violation than those of the algorithm in \cite{Zhang2024b}.
Furthermore, our Algorithm~1 has the largest network regret and cumulative constraint violation, and the algorithm in \cite{Yi2024} has the smallest network regret and cumulative constraint violation.
That is reasonable, as the algorithm in \cite{Yi2024} uses full-information feedback, our Algorithm~2 and the algorithm in \cite{Zhang2024b} use two-point bandit feedback,
and our Algorithm~1 only uses one-point bandit feedback. 
Moreover, our Algorithms~1 and~2 use compressed communication, whereas the algorithms in \cite{Yi2024, Zhang2024b} use perfect communication.
In addition, for Algorithm~1, as ${\theta _1}$ increases from $19/24$ to $5/6$, the network regret becomes larger, whereas the network cumulative constraint violation bound becomes smaller.
For Algorithm~2, as ${\theta _1}$ increases from $2/5$ to $1/2$, both the network regret and cumulative constraint violation become smaller.
Figs. 3 and 4 illustrate the evolutions of network regret and cumulative constraint violation with and without Slater’s condition, respectively.
For the setting where Slater’s condition does not hold, we choose ${B_{i,t}} = \left[ {\begin{array}{*{20}{c}}
\hat{b}&0\\
{ - \hat{b}}&0
\end{array}} \right]$, ${b_{i,t}} = \left[ {\begin{array}{*{20}{c}}
0\\
0
\end{array}} \right]$. Here $\hat{b}$ is randomly generated from the uniform distribution in the interval $[ {0,1} ]$.
As shown in Figs. 3 and 4, Algorithms~1 and~2 exhibit almost the same network regret regardless of whether Slater’s condition holds, respectively. In contrast, when Slater’s condition holds, they exhibit smaller network cumulative constraint violation.
These simulation results are consistent with the results in Theorems~1 and~2.
In addition, as shown in Figs.~5 and~6, for Algorithms 1 and 2, higher quantization levels result in larger network regret and cumulative constraint violation under the same number of transmitted bits.

\section{Conclusions}
This paper studied the distributed online nonconvex optimization problem with time-varying constraints. 
To better utilize communication resources, we proposed two distributed bandit online primal--dual algorithms with compressed communication, tailored for the one-point and two-point bandit feedback settings, respectively. 
More importantly, these algorithms were able to handle time-varying and directed communication topologies. 
We demonstrated that they established sublinear network regret and cumulative constraint violation bounds. 
Moreover, the network cumulative constraint violation bounds were further reduced under Slater’s condition. 
In the future, we plan to investigate compressors with bounded relative compression error, as compressors with bounded absolute and relative errors generally exhibit distinct characteristics, and neither type is strictly more restrictive than the other, nor does one imply the other. Unlike compressors with bounded absolute compression error, where the error bound is fixed regardless of the input, a key challenge in studying compressors with bounded relative compression error is that the compression error scales with the norm of the input vector, which necessitates the development of new algorithmic designs and analytical methods.

\appendix

\hspace{-3mm}\emph{A. Useful Lemmas}

We begin by presenting several preliminary results that will be utilized in the subsequent proofs.
\begin{lemma}
(\cite{Nedic2009, Nedic2014})
Let ${W_t}$ denote the mixing matrix associated with a time-varying graph that satisfies Assumption~4. Then,
\begin{flalign}
\Big| {{{[ {\Psi _s^t} ]}_{ij}} - \frac{1}{n}} \Big| \le \tau {\lambda ^{t - s}}, \forall i,j \in [ n ], \forall t \ge s \ge 1, \label{Lemma2-eq1}
\end{flalign}
where $\Psi _s^t = {W_t}{W_{t - 1}} \cdots {W_s}$, $\tau  = {( {1 - \omega /4{n^2}} )^{ - 2}} > 1$, and $\lambda  = {( {1 - \omega /4{n^2}} )^{1/B}} \in ( {0,1} )$.
\end{lemma}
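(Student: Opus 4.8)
The plan is to prove that the transition matrices $\Psi_s^t$ converge to the uniform averaging matrix $J := \frac{1}{n}\mathbf{1}_n\mathbf{1}_n^T$ geometrically in $t-s$, and then extract $\tau$ and $\lambda$ from the constants this produces. Two structural facts drive the argument. First, each $\Psi_s^t$ is doubly stochastic, being a product of doubly stochastic matrices by Assumption~5(ii) and induction on the number of factors; this pins the limit to exactly $J$, and in particular gives $\Psi_s^t\mathbf{1}_n = \mathbf{1}_n$ and $\mathbf{1}_n^T\Psi_s^t = \mathbf{1}_n^T$, hence $\Psi_s^t J = J\Psi_s^t = J$ and $\Psi_s^t - J = \Psi_s^t(I - J)$. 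Second, since $W_\ell\mathbf{1}_n = \mathbf{1}_n$ and $\mathbf{1}_n^T W_\ell = \mathbf{1}_n^T$, each $W_\ell$ maps the subspace $\mathbf{1}_n^\perp$ into itself, where its restriction is a contraction of operator norm $\sigma_2(W_\ell) \le 1$ (the second singular value of $W_\ell$), and these restricted operators are submultiplicative along products. It therefore suffices to bound $\|\Psi_s^t - J\|_2$, the operator norm of $\Psi_s^t$ on $\mathbf{1}_n^\perp$, because $|[\Psi_s^t]_{ij} - \frac{1}{n}| = |[\Psi_s^t - J]_{ij}| \le \|\Psi_s^t - J\|_2$.

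The crucial step is a one-window contraction estimate: I would use Assumption~5(i) (present edges and all self-loops carry weight at least $w$) together with Assumption~5(iii) (the union of any $B$ consecutive graphs is strongly connected) to show $\sigma_2\big(W_{\ell+B-1}\cdots W_\ell\big)\le 1 - \frac{w}{4n^2}$ for every $\ell$. The mechanism is a conductance-type argument: for a unit vector $v\perp\mathbf{1}_n$, track the running iterate through the $B$ steps; joint strong connectivity forces some step within the window to have an edge crossing a level set of the current iterate, and the uniform weight lower bound $w$ quantifies the Euclidean norm lost when mass is moved across that cut, while the persistent self-loops let the estimate be carried through the remaining steps. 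Granting this, group the $t-s+1$ factors of $\Psi_s^t$ into $\lfloor (t-s+1)/B\rfloor$ consecutive length-$B$ blocks — each contracting $\mathbf{1}_n^\perp$ by the factor $1-\frac{w}{4n^2}$ — and bound the at most $B-1$ leftover factors by $\sigma_2\le 1$; submultiplicativity then gives $\|\Psi_s^t - J\|_2 \le \big(1 - \frac{w}{4n^2}\big)^{\lfloor (t-s+1)/B\rfloor}$. Using $\lfloor (t-s+1)/B\rfloor \ge (t-s)/B - 1$ and absorbing this together with the leftover block into the prefactor yields $|[\Psi_s^t]_{ij} - \frac{1}{n}| \le \tau\lambda^{t-s}$ with $\lambda = \big(1-\frac{w}{4n^2}\big)^{1/B}\in(0,1)$ and $\tau = \big(1-\frac{w}{4n^2}\big)^{-2} > 1$.

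I expect the one-window estimate $\sigma_2(W_{\ell+B-1}\cdots W_\ell)\le 1-\frac{w}{4n^2}$ to be the main obstacle: strong connectivity of the union graph by itself yields only \emph{some} contraction, via a compactness argument, and pinning down the explicit $\frac{w}{4n^2}$ dependence requires the careful Cheeger/conductance-style bookkeeping sketched above, tracking how a vector orthogonal to $\mathbf{1}_n$ necessarily loses a definite fraction of its norm across a connecting cut when every relevant weight is at least $w$. This quantitative estimate, and the precise values of $\tau$ and $\lambda$, are exactly what \cite{Nedic2009, Nedic2014} establish, so in the write-up I would present the above as the conceptual skeleton and cite those references for the constants. (A variant replaces the $\ell_2$ operator norm on $\mathbf{1}_n^\perp$ by Dobrushin's coefficient of ergodicity, which is likewise submultiplicative over products of stochastic matrices; the doubly stochastic structure is what makes the sharper $\ell_2$ argument available here.)
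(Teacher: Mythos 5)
The paper gives no proof of this lemma; it is imported verbatim from \cite{Nedic2009, Nedic2014}, which is also where your proposal ultimately lands for the one quantitative step that matters. Your sketch (double stochasticity pins the limit to $\frac{1}{n}\mathbf{1}_n\mathbf{1}_n^T$, submultiplicative contraction on $\mathbf{1}_n^\perp$ over length-$B$ windows, entrywise bound via the operator norm, and the per-window estimate $1-\frac{w}{4n^2}$ deferred to the cited references) is a faithful outline of the standard argument and is consistent with how the paper treats the lemma, so there is nothing to fault.
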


\begin{lemma}
(\cite[Lemma~3]{Yi2021b} and \cite[Lemma~1]{Zhang2024b})
Let $\mathbb{K}$ denote a nonempty closed convex subset of ${\mathbb{R}^p}$ and let $b$ and $c$ denote two vectors in ${\mathbb{R}^p}$. If ${x} = {\mathcal{P}_\mathbb{K}}( {b - c} )$, then for all $y \in \mathbb{K}$,
\begin{flalign}
2\langle {x - y,c} \rangle  \le {\| {y - b} \|^2} - {\| {y - x} \|^2} - {\| {x - b} \|^2}. \label{Lemma3-eq1}
\end{flalign}
In addition, if $\Phi ( y ) = {\| {b - y} \|^2} + 2\langle {c,y} \rangle $, then we know $\Phi$ is a strongly convex function with convexity parameter $\sigma  = 2$ and $x = \mathop {\arg \min }\limits_{y \in \mathbb{K}} \Phi ( y )$. Moreover, 
\begin{flalign}
\| {x - b} \| \le {\| c \|} \label{Lemma3-eq2}
\end{flalign}
holds.
\end{lemma}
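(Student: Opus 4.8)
The plan is to reduce both parts of the statement to the standard variational (obtuse-angle) characterization of the Euclidean projection onto a closed convex set. I would begin with the ``in addition'' part, since it makes the rest transparent. Expanding and completing the square, $\Phi(y) = \|b-y\|^2 + 2\langle c,y\rangle = \|y\|^2 - 2\langle b-c,y\rangle + \|b\|^2 = \|y-(b-c)\|^2 + (\|b\|^2 - \|b-c\|^2)$, and the parenthesized term does not depend on $y$; hence minimizing $\Phi$ over $\mathbb{K}$ is the same as minimizing $\|y-(b-c)\|^2$ over $\mathbb{K}$, whose unique minimizer is by definition $\mathcal{P}_{\mathbb{K}}(b-c)=x$. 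Strong convexity with parameter $\sigma=2$ is immediate since the Hessian of $\Phi$ is $2 I_p$.

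For the first inequality I would use the first-order optimality condition for the constrained minimization of the convex function $\Phi$ over the convex set $\mathbb{K}$: since $x=\arg\min_{y\in\mathbb{K}}\Phi(y)$, we have $\langle \nabla\Phi(x),\,y-x\rangle\ge 0$ for every $y\in\mathbb{K}$, with $\nabla\Phi(x)=2\big(x-(b-c)\big)$, that is, $\langle x-(b-c),\,x-y\rangle\le 0$. This is combined with the elementary identity $\|y-b\|^2-\|y-x\|^2-\|x-b\|^2 = 2\langle y-x,\,x-b\rangle$, obtained by writing $y-b=(y-x)+(x-b)$ and expanding. Rearranging the optimality inequality then gives $2\langle x-y,\,c\rangle \le 2\langle y-x,\,x-b\rangle = \|y-b\|^2-\|y-x\|^2-\|x-b\|^2$, which is exactly the claimed bound. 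Equivalently, one can note that $2$-strong convexity together with optimality of $x$ yields $\Phi(y)\ge\Phi(x)+\|y-x\|^2$ and expand both sides to obtain the same estimate; either route is short.

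Finally, for $\|x-b\|\le\|c\|$ I would specialize the first inequality to $y=b$: the right-hand side collapses to $\|b-b\|^2-\|b-x\|^2-\|x-b\|^2 = -2\|x-b\|^2$, so $2\langle x-b,c\rangle\le -2\|x-b\|^2$, hence $\|x-b\|^2 \le \langle b-x,c\rangle \le \|x-b\|\,\|c\|$ by Cauchy--Schwarz, and dividing by $\|x-b\|$ (the case $x=b$ being trivial) finishes. There is no genuine obstacle in this argument; the only subtlety worth flagging is that this last inequality silently uses $b\in\mathbb{K}$ — it is false otherwise, as seen by taking $\mathbb{K}=\{0\}$, $b$ large, and $c=0$ — and this hypothesis is satisfied in every place the lemma is invoked in the sequel, where $b$ plays the role of a previous iterate already lying in the relevant set.
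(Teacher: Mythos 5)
The paper does not actually prove this lemma: it is imported by citation from \cite{Zhang2024b} and \cite{Yi2021b} and stated without proof, so there is no in-paper argument to compare against. Your proof is correct and is the standard one: the completion-of-the-square identity $\Phi(y)=\|y-(b-c)\|^2+\|b\|^2-\|b-c\|^2$ cleanly identifies the minimizer with the projection, the variational inequality $\langle x-(b-c),\,x-y\rangle\le 0$ combined with the three-term expansion of $\|y-b\|^2$ gives \eqref{Lemma3-eq1}, and both of your suggested routes (variational inequality vs.\ $2$-strong convexity) are equivalent and fine. Your flag on \eqref{Lemma3-eq2} is the one substantive point and it is well taken: as stated the lemma omits the hypothesis $b\in\mathbb{K}$, without which \eqref{Lemma3-eq2} is false (your counterexample $\mathbb{K}=\{0\}$, $c=0$ is valid), and your derivation via $y=b$ — or equivalently via nonexpansiveness of $\mathcal{P}_{\mathbb{K}}$ together with $\mathcal{P}_{\mathbb{K}}(b)=b$ — genuinely needs it. One caveat on your closing remark: it is not obvious that $b\in\mathbb{K}$ holds at every invocation in this paper. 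In \eqref{Lemma9-proof-ii-eq7} and \eqref{Lemma12-proof-iii-eq2} the lemma is applied with $b=x_{i,t}=\sum_{j}[W_t]_{ij}\hat z_{j,t}$ and $\mathbb{K}=(1-\xi_{t+1})\mathbb{X}$, where the $\hat z_{j,t}$ are compressed surrogates that need not lie in $(1-\xi_{t+1})\mathbb{X}$; so the missing hypothesis is not automatically discharged in the sequel and would need a separate argument (or a bound in terms of $\|c\|+\mathrm{dist}(b-c,\mathbb{K})$).
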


We provide some properties of the gradient estimators in the following lemma.
\begin{lemma}\label{lem1}
(\cite[Lemma~3]{Yi2023}, \cite[Lemma~2]{Yi2021b} and \cite[Lemma~5]{Tang2020})
If Assumptions~2--3 hold, and ${{{{g}_{i,t}}( x )} }$ is convex on $\mathbb{X}$. Then, ${{{{\hat g}_{i,t}}( x )} }$ is convex on $( {1 - {\xi _t}} )\mathbb{X}$. 
Moreover, for any $i \in [ n ]$, $t \in {\mathbb{N}_ + }$, $x \in ( {1 - {\xi _t}} )\mathbb{X}$, $v \in \mathbb{R}_ + ^{{m_i}}$,
\begin{subequations}
\begin{flalign}
&\nabla {{\hat f}_{i,t}}( x ) = {\mathbf{E}_{{\mathfrak{U}_t}}}[ {\hat \nabla_1 {f_{i,t}}( x )} ] = {\mathbf{E}_{{\mathfrak{U}_t}}}[ {\hat \nabla_2 {f_{i,t}}( x )} ], \label{lemma5-eq1}\\
&\| {\hat \nabla_1 {f_{i,t}}( x )} \| \le \frac{{p{F_1}}}{{{\delta _t}}}, \label{lemma5-eq2} \\
&\| {\hat \nabla_2 {f_{i,t}}( x )} \| \le p{G_1}, \label{lemma5-eq3}\\
&\nabla {{{{\hat g}_{i,t}}( x )} } = {\mathbf{E}_{{\mathfrak{U}_t}}}\big[ {\hat \nabla_1 {{{g_{i,t}}( x )} }} \big] = {\mathbf{E}_{{\mathfrak{U}_t}}}\big[ {\hat \nabla_2 {{{g_{i,t}}( x )} }} \big], \label{lemma5-eq4} \\
&{v^T}{{{g_{i,t}}( x )} } \le {v^T}{{{{\hat g}_{i,t}}( x )} }
\le {v^T}{{{g_{i,t}}( x )} } + {G_2}{\delta _t}\| v \|, \label{lemma5-eq5}\\
&\| {\hat \nabla_1 {{{g_{i,t}}( x )} }} \| \le \frac{{p{F_2}}}{{{\delta _t}}}, \label{lemma5-eq6} \\
&\| {\hat \nabla_2 {{{g_{i,t}}( x )} }} \| \le p{G_2}, \label{lemma5-eq7} \\
&\| {\nabla {{\hat f}_{i,t}}( x ) - \nabla {f_{i,t}}( x )} \| \le L{\delta _t}, \label{lemma5-eq8}
\end{flalign}
\end{subequations}
where ${{\hat f}_{i,t}}( x ) = {\mathbf{E}_{w_t \in {\mathbb{B}^p}}}[ {{f_{i,t}}( {x + {\delta _t}w_t} )} ]$ and ${{{{\hat g}_{i,t}}( x )} } = {\mathbf{E}_{w_t \in {\mathbb{B}^p}}}\big[ {{{{g_{i,t}}( {x + {\delta _t}w_t} )}}} \big]$ with $w_t$ being chosen uniformly at random, 
and ${\mathfrak{U}_t}$ is the $\sigma$-algebra induced by the independent and identically distributed variables ${u_{1,t}}, \cdot  \cdot  \cdot ,{u_{n,t}}$. Let ${\mathcal{U}_t} =  \cup _{s = 1}^t{\mathfrak{U}_s}$. 
For Algorithm~1, it is straightforward to see that $\{ {{{\hat z}_{j,t}}} \}$, $\{ {{x_{i,t}}} \}$, $\{ {{e_{i,t-1}}} \}$, $\{ {{v_{i,t}}} \}$, $\{ {{a _{i,t}}} \}$, and $\{ {{z_{i,t}}} \}$, $i \in [ n ]$  depend on ${\mathcal{U}_{t-1}}$ and are independent of ${\mathfrak{U}_s}$ for all $s \ge t$. 
For Algorithm~2, it is similarly straightforward to see that $\{ {{{\hat z}_{j,t}}} \}$, $\{ {{x_{i,t}}} \}$, $\{ {{b _{i,t}}} \}$, and $\{ {{z_{i,t}}} \}$, $i \in [ n ]$  depend on ${\mathcal{U}_{t-1}}$ and are independent of ${\mathfrak{U}_s}$ for all $s \ge t$. 
Moreover, it is also straightforward to see that $\{ {{v_{i,t}}} \}$ in Algorithm~2 is independent of ${\mathfrak{U}_s}$ for all $s$.
\end{lemma}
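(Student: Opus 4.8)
The plan is to establish the listed properties essentially one by one, relying heavily on Lemma~4, which already contains the core smoothing identities and estimates. First I would prove the convexity claim: since $g_{i,t}$ is convex on $\mathbb{X}$, for $x \in (1-\xi_t)\mathbb{X}$ the smoothed function $\hat g_{i,t}(x) = \mathbf{E}_{w_t \in \mathbb{B}^p}[g_{i,t}(x+\delta_t w_t)]$ is an average of convex functions over a fixed distribution, hence convex; each component is handled separately. Next, the gradient-identity equations \eqref{lemma5-eq1} and \eqref{lemma5-eq4} are immediate specializations of the identity $\nabla \hat f(x) = \mathbf{E}_{u\in\mathbb{S}^p}[\hat\nabla_1 f(x)] = \mathbf{E}_{u\in\mathbb{S}^p}[\hat\nabla_2 f(x)]$ from Lemma~4, applied to $f_{i,t}$ and to each coordinate of $g_{i,t}$, with $\delta = \delta_t$, $\xi = \xi_t$, $r(\mathbb{K}) = r(\mathbb{X})$; here I would note that $\delta_t \in (0, r(\mathbb{X})\xi_t]$ is exactly the hypothesis required in Lemma~4, and that $\mathbf{E}_{\mathfrak{U}_t}$ restricted to $u_{i,t}$ coincides with $\mathbf{E}_{u\in\mathbb{S}^p}$ since the $u_{j,t}$ are i.i.d.\ uniform on $\mathbb{S}^p$.

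For the norm bounds \eqref{lemma5-eq2}, \eqref{lemma5-eq3}, \eqref{lemma5-eq6}, \eqref{lemma5-eq7}, I would argue directly from the definitions of $\hat\nabla_1$ and $\hat\nabla_2$ together with $\|u_{i,t}\| = 1$: for \eqref{lemma5-eq2}, $\|\hat\nabla_1 f_{i,t}(x)\| = \frac{p}{\delta_t}|f_{i,t}(x+\delta_t u_{i,t})| \le \frac{p F_1}{\delta_t}$ by Assumption~2; \eqref{lemma5-eq6} follows the same way from \eqref{ass2-eq1b} (or equivalently from part~(a) of Lemma~4 with $F = F_2$). For the two-point estimators, \eqref{lemma5-eq3}: $\|\hat\nabla_2 f_{i,t}(x)\| = \frac{p}{\delta_t}|f_{i,t}(x+\delta_t u_{i,t}) - f_{i,t}(x)| \le \frac{p}{\delta_t}\cdot G_1 \delta_t = p G_1$ using the Lipschitz bound that follows from Assumption~3 and Lemma~2.6 in \cite{ShalevShwartz2012} (the $f$-analogue of \eqref{ass4-eq2}); \eqref{lemma5-eq7} is identical using \eqref{ass4-eq1b}/\eqref{ass4-eq2}. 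The smoothing-error bound \eqref{lemma5-eq8} is a direct instance of \eqref{Lemma4-eq4} with $L$ from Assumption~4 and $\delta = \delta_t$.

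The inequality \eqref{lemma5-eq5} is the one requiring a little more care. For the upper bound I would write $v^T g_{i,t}(x) \le v^T \hat g_{i,t}(x) = v^T \mathbf{E}_{w_t}[g_{i,t}(x+\delta_t w_t)]$: since $v \in \mathbb{R}_+^{m_i}$ and $g_{i,t}$ is convex coordinatewise, $g_{i,t}(x) \le \hat g_{i,t}(x)$ holds componentwise by Jensen's inequality (the average of $g_{i,t}$ over the ball dominates the value at the center), and pairing with the nonnegative $v$ preserves the inequality; for the right-hand bound, $v^T(\hat g_{i,t}(x) - g_{i,t}(x)) \le \|v\|\,\|\hat g_{i,t}(x) - g_{i,t}(x)\| \le \|v\|\,\mathbf{E}_{w_t}[\|g_{i,t}(x+\delta_t w_t) - g_{i,t}(x)\|] \le \|v\|\, G_2 \delta_t$ using Cauchy--Schwarz, Jensen, and \eqref{ass4-eq2} with $\|w_t\| \le 1$. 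Finally, the measurability/independence statements at the end are justified by inspecting the update equations of each algorithm: $x_{i,t}$, $\hat z_{j,t}$, $z_{i,t}$, $v_{i,t}$, $a_{i,t}$ (resp.\ $b_{i,t}$) are all built from data available through iteration $t-1$ plus, at most, $u_{i,t-1}$, so they are $\mathcal{U}_{t-1}$-measurable and independent of $\mathfrak{U}_s$ for $s \ge t$; for Algorithm~2 the dual variable $v_{i,t+1} = \gamma_t [g_{i,t}(x_{i,t})]_+$ depends on $x_{i,t}$ only (not on $u_{i,t}$), hence $v_{i,t}$ is independent of every $\mathfrak{U}_s$. The main obstacle, modest as it is, is bookkeeping: making sure that the hypothesis $\delta_t \le r(\mathbb{X})\xi_t$ licenses every invocation of Lemma~4 on the contracted set, and that the coordinatewise convexity of $g_{i,t}$ is genuinely used for the left inequality in \eqref{lemma5-eq5} — no part of the argument is deep, but each of the nine displayed facts must be matched to the right earlier result.
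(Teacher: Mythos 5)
Your proposal is correct in substance and follows essentially the same route as the paper, which simply reduces every item to the smoothing lemma (the paper's Lemma~3, whose internal labels are \eqref{Lemma4-eq1}--\eqref{Lemma4-eq4}) together with Assumptions~2--4 and the Lipschitz bound \eqref{ass4-eq2}; the paper outsources most of the details to Lemma~3 of \cite{Yi2023}, so your write-up is in effect the filled-in version of the same argument.

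One justification needs repair. For \eqref{lemma5-eq3} you bound $|f_{i,t}(x+\delta_t u_{i,t})-f_{i,t}(x)|\le G_1\delta_t$ by appealing to ``the $f$-analogue of \eqref{ass4-eq2}'' via Lemma~2.6 of \cite{ShalevShwartz2012}. That lemma derives Lipschitz continuity from bounded \emph{subgradients of a convex function}, and the paper's Remark~1 explicitly points out that the nonconvexity of $f_{i,t}$ precludes exactly this route. The conclusion is still true, but the correct mechanism is Assumption~4 (smoothness, hence differentiability of $f_{i,t}$) combined with Assumption~3: writing $f_{i,t}(x+\delta_t u_{i,t})-f_{i,t}(x)=\int_0^{1}\langle\nabla f_{i,t}(x+s\delta_t u_{i,t}),\delta_t u_{i,t}\rangle\,ds$ along the segment (which stays in $\mathbb{X}$) and using $\|\nabla f_{i,t}\|\le G_1$ gives the $G_1\delta_t$ bound. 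Everything else --- the convexity of $\hat g_{i,t}$ as an average of convex functions, the identities \eqref{lemma5-eq1} and \eqref{lemma5-eq4} from the smoothing lemma, the one-point bounds \eqref{lemma5-eq2} and \eqref{lemma5-eq6} from Assumption~2, the sandwich \eqref{lemma5-eq5} via Jensen and \eqref{ass4-eq2}, the bound \eqref{lemma5-eq8} from \eqref{Lemma4-eq4}, and the measurability bookkeeping --- is handled correctly.
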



\begin{lemma}
If Assumption~4 holds. For all $i,j \in [ n ]$ and $t \in {\mathbb{N}_ + }$, ${\hat{z}_{i,t}}$ generated by Algorithm~1 satisfy
\begin{flalign}
&\| {{\hat{z}_{i,t}} - {{\bar z}_t}} \| \le \tau {\lambda ^{t - 2}}\sum\limits_{j = 1}^n {\| {{\hat{z}_{j,1}}} \|}  + \tau \sum\limits_{s = 1}^{t - 2} {{\lambda ^{t - s - 2}}} \sum\limits_{j = 1}^n {\| {\varepsilon _{j,s}^z} \|}  + \| {\varepsilon _{i,t - 1}^z} \| + \frac{1}{n}\sum\limits_{j = 1}^n {\| {\varepsilon _{j,t - 1}^z} \|}, \label{Lemma5.5-eq1} \\
&\;\;\;\;\;\;\;\;\;\;\;\;\;\;\;\;\;\;\;\;\;\;\;\;\;\;\;\;\;\;\;\;2\sum\limits_{t = 1}^T {\sum\limits_{i = 1}^n {\| {{{\hat z}_{i,t}} - {{\bar z}_t}} \|} }
\le n{\vartheta _1} + {{\vartheta }_2}\sum\limits_{t = 1}^T {\sum\limits_{i = 1}^n {{\| {\varepsilon _{i,t}^z} \|} } }, \label{Lemma6-eq3} 
\end{flalign}
and ${x_{i,t}}$ generated by Algorithm~1 satisfy
\begin{flalign}
\frac{1}{n}\sum\limits_{t = 1}^T {\sum\limits_{i = 1}^n {\sum\limits_{j = 1}^n {{\| {{x_{i,t}} - {x_{j,t}}} \|} } } }
&\le n{\vartheta _1} + {{\vartheta }_2}\sum\limits_{t = 1}^T {\sum\limits_{i = 1}^n {{\| {\varepsilon _{i,t}^z} \|} } }, \label{Lemma6-eq1} \\
\frac{1}{n}\sum\limits_{t = 1}^T {\sum\limits_{i = 1}^n {\sum\limits_{j = 1}^n {{{{\| {{x_{i,t}} - {x_{j,t}}} \|}^2}} } } } &\le {{ \vartheta }_3} + {{ \vartheta }_4}\sum\limits_{t = 1}^T {\sum\limits_{i = 1}^n {{{\| {\varepsilon _{i,t}^z} \|}^2}} }, \label{Lemma6-eq2}
\end{flalign}
where ${{\bar z}_t} = \frac{1}{n}\sum\nolimits_{i = 1}^n {{{\hat z}_{i,t}}} $, $\varepsilon _{i,t}^z = {\hat{z}_{i,t+1}} - {x_{i,t}}$, ${\vartheta _1} = \frac{{2\tau }}{{\lambda ( {1 - \lambda } )}}\sum\limits_{j = 1}^n {\| {{{\hat z}_{j,1}}} \|}$, ${{ \vartheta }_2} = 4 + \frac{{2n\tau }}{{1 - \lambda }}$, ${\vartheta _3} = \frac{{16n{\tau ^2}}}{{{\lambda ^2}( {1 - {\lambda ^2}} )}}{\Big( {\sum\limits_{i = 1}^n {\| {{{\hat z}_{i,1}}} \|} } \Big)^2}$, 
and ${\vartheta _4} = \frac{{16{n^2}{\tau ^2}}}{{{{( {1 - \lambda } )}^2}}} + 32$.
\end{lemma}
\begin{proof}
From \eqref{Algorithm1-eq2}, we have
\begin{flalign}
\hat z_{i,t} = \sum\limits_{j = 1}^n {{{[ {{W_{t - 1}}} ]}_{ij}}\hat z_{j,t - 1}^i}  + \varepsilon _{i,{t-1}}^z. \label{Lemma5.5-proof-eq1}
\end{flalign}
Then, noting that ${\hat{z}_{j,t}^i}={\hat{z}_{j,t}}$, and following the proof of Lemma 4 in \cite{Yi2023}, we know that \eqref{Lemma5.5-eq1} holds.

From \eqref{Lemma5.5-eq1}, we know that \eqref{Lemma6-eq3}--\eqref{Lemma6-eq2} hold by following the proof of Lemma 1 in \cite{Yi2024}.
\end{proof}

We analyze local regret at one iteration in the following lemma.
\begin{lemma}
Suppose Assumptions~1--2 and 4--5 hold. For all $i \in [ n ]$, let $\{ {{x_{i,t}}} \}$ be the sequences generated by Algorithm~1, $y$ be an arbitrary point in $\mathbb{X}$, 
and $\hat y = ( {1 - {\xi _t}} )y$, then
\begin{flalign}
\nonumber
&\;\;\;\;\;\frac{1}{n}\sum\limits_{i = 1}^n {{\mathbf{E}}[ {v_{i,t + 1}^T{g_{i,t}}( {{x_{i,t}}} )} ]} 
+ \frac{1}{n}\sum\limits_{i = 1}^n {{\mathbf{E}}[ {\langle {\nabla {f_{i,t}}( {{x_{i,t}}} ),{x_{i,t}} - y} \rangle } ]} \\
\nonumber
& \le \frac{1}{n}\sum\limits_{i = 1}^n {{\mathbf{E}}[ {v_{i,t + 1}^T{g_{i,t}}( y )} ]}  
+ \frac{1}{n}\sum\limits_{i = 1}^n {{{\mathbf{E}}[ {{\Delta _{i,t}}( {\hat y} )} ]} }  
+ \frac{1}{n}{{\mathbf{E}}[ {{{\bar \Delta }_t}} ]} \\
&\;\; + \frac{1}{n}\sum\limits_{i = 1}^n {{G_2}\big( {R( \mathbb{X} ){\xi _t} + {\delta _t}} \big){\mathbf{E}}[ {\| {{v_{i,t + 1}}} \|} ]} 
+ {G_1}R( \mathbb{X} ){\xi _t} + 2LR( \mathbb{X} ){\delta _t} + 2\sqrt C R( \mathbb{X} )\frac{{{s_t}}}{{{\alpha _t}}}, \label{Lemma7-eq1}
\end{flalign}
where 
\begin{flalign}
\nonumber
&{\Delta _{i,t}}( \hat{y} ) = \frac{1}{{2{\alpha _t}}}( {{{\| {\hat{y} - {x_{i,t}}} \|}^2} - {{\| {\hat{y} - {x_{i,t + 1}}} \|}^2}} ), \\
\nonumber
&{{ \bar{\Delta} }_t} = \sum\limits_{i = 1}^n {\Big( {\frac{{p{F_1}}}{{{\delta _t}}} + \frac{{p{F_2}}}{{{\delta _t}}}\| {{v_{i,t + 1}}} \|} \Big)\| {x_{i,t}} - z_{i,t + 1} \|}  
- \sum\limits_{i = 1}^n {\frac{{{{\| {x_{i,t}} - z_{i,t + 1} \|}^2}}}{{2{\alpha _t}}}}.
\end{flalign}
\end{lemma}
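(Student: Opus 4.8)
The plan is to establish the per-iteration inequality agent by agent, starting from the primal projection step \eqref{Algorithm1-eq6}, and only at the end sum over $i\in[n]$, divide by $n$, and take total expectation. First I would apply Lemma~2 to $z_{i,t+1}=\mathcal{P}_{(1-\xi_{t+1})\mathbb{X}}(x_{i,t}-\alpha_t a_{i,t+1})$ with comparator $\hat y=(1-\xi_t)y$; since $\{\xi_t\}$ is nonincreasing and $\mathbf{0}_p\in\mathbb{X}$ by \eqref{ass1-eq1}, one has $\hat y\in(1-\xi_t)\mathbb{X}\subseteq(1-\xi_{t+1})\mathbb{X}$, so \eqref{Lemma3-eq1} gives $2\alpha_t\langle z_{i,t+1}-\hat y,a_{i,t+1}\rangle\le\|\hat y-x_{i,t}\|^2-\|\hat y-z_{i,t+1}\|^2-\|z_{i,t+1}-x_{i,t}\|^2$. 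Writing $z_{i,t+1}-\hat y=(x_{i,t}-\hat y)+(z_{i,t+1}-x_{i,t})$, I would move the $z_{i,t+1}-x_{i,t}$ term to the right by Cauchy--Schwarz and bound $\|a_{i,t+1}\|\le pF_1/\delta_t+(pF_2/\delta_t)\|v_{i,t+1}\|$ using \eqref{lemma5-eq2} and \eqref{lemma5-eq6}; combined with $-\|z_{i,t+1}-x_{i,t}\|^2/(2\alpha_t)$ this is precisely the $i$-th summand of $\bar\Delta_t$, leaving $\langle x_{i,t}-\hat y,a_{i,t+1}\rangle$ to be lower-bounded.

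For the loss part of $a_{i,t+1}$ I would take the conditional expectation given $\mathcal{U}_{t-1}$: since $x_{i,t}$ and $\hat y$ are $\mathcal{U}_{t-1}$-measurable, \eqref{lemma5-eq1} gives $\mathbf{E}[\langle x_{i,t}-\hat y,\hat\nabla_1 f_{i,t}(x_{i,t})\rangle\mid\mathcal{U}_{t-1}]=\langle x_{i,t}-\hat y,\nabla\hat f_{i,t}(x_{i,t})\rangle$; replacing $\nabla\hat f_{i,t}$ by $\nabla f_{i,t}$ through \eqref{lemma5-eq8} and $\hat y$ by $y$ through $\hat y-y=-\xi_t y$ and \eqref{ass4-eq1a}, together with $\|x_{i,t}-\hat y\|\le 2R(\mathbb{X})$ from \eqref{ass1-eq1}, produces the errors $G_1R(\mathbb{X})\xi_t$ and $pLR(\mathbb{X})\delta_t$. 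For the constraint part I would again condition on $\mathcal{U}_{t-1}$ and use convexity of the smoothed constraint $\hat g_{i,t}$ on $(1-\xi_t)\mathbb{X}$ with \eqref{lemma5-eq4} to pass to $v_{i,t+1}^T(\hat g_{i,t}(x_{i,t})-\hat g_{i,t}(\hat y))$, then convert $\hat g_{i,t}$ back to $g_{i,t}$ via \eqref{lemma5-eq5} and move $\hat y$ to $y$ via the Lipschitz bound \eqref{ass4-eq2}, which is where the term $G_2(R(\mathbb{X})\xi_t+\delta_t)\|v_{i,t+1}\|$ appears.

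The main obstacle is precisely this constraint step: by \eqref{Algorithm1-eq4}--\eqref{Algorithm1-eq5} the dual iterate $v_{i,t+1}=\gamma_t[g_{i,t}(x_{i,t}+\delta_t u_{i,t})]_+$ and the estimator $\hat\nabla_1 g_{i,t}(x_{i,t})$ are built from the \emph{same} perturbation $u_{i,t}$, so $\mathbf{E}[\hat\nabla_1 g_{i,t}(x_{i,t})v_{i,t+1}\mid\mathcal{U}_{t-1}]$ does not factor into $\mathbf{E}[\hat\nabla_1 g_{i,t}(x_{i,t})\mid\mathcal{U}_{t-1}]\,\mathbf{E}[v_{i,t+1}\mid\mathcal{U}_{t-1}]$, and the convexity inequality for $\hat g_{i,t}$ cannot be applied naively under the expectation. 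The way around it is to observe that $\hat\nabla_1 g_{i,t}(x_{i,t})v_{i,t+1}$ equals $\gamma_t$ times a one-point gradient estimator of the scalar convex function $x\mapsto\|[g_{i,t}(x)]_+\|^2$ evaluated at $x_{i,t}$, so its conditional mean is the gradient of the corresponding ball-smoothed function; the convexity bound is then carried out on that function, and the gap between $\gamma_t\|[g_{i,t}(x_{i,t})]_+\|^2$ and $v_{i,t+1}^Tg_{i,t}(x_{i,t})$ (as well as between the contracted and original comparators) is controlled by the $\mathcal{O}(\delta_t)$ and $\mathcal{O}(\xi_t)$ Lipschitz/contraction estimates implied by \eqref{ass4-eq2} and \eqref{ass4-eq1a}.

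Finally I would sum over $i$ and convert $-\|\hat y-z_{i,t+1}\|^2$ from the projection inequality into the $-\|\hat y-x_{i,t+1}\|^2$ needed for $\Delta_{i,t}(\hat y)$: since $x_{i,t+1}=\sum_j[W_{t+1}]_{ij}\hat z_{j,t+1}$ by \eqref{Algorithm1-eq2}, convexity of $\|\cdot\|^2$ and double stochasticity give $\sum_i\|\hat y-x_{i,t+1}\|^2\le\sum_j\|\hat y-\hat z_{j,t+1}\|^2$, and then I would pass from $\hat z_{j,t+1}$ to $z_{j,t+1}$ using $\mathbf{E}[\|z_{j,t+1}-\hat z_{j,t+1}\|]\le\sqrt C\,s_{t+1}\le\sqrt C\,s_t$ (from \eqref{Lemma7-proof-eq9}) and $\|\hat y\|,\|z_{j,t+1}\|\le R(\mathbb{X})$, which accounts for the $2\sqrt C\,R(\mathbb{X})s_t/\alpha_t$ term. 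Combining the four pieces, dividing by $n$, and taking total expectation yields \eqref{Lemma7-eq1}.
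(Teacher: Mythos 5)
Your skeleton coincides with the paper's proof almost step for step: applying \eqref{Lemma3-eq1} to \eqref{Algorithm1-eq6} with comparator $\hat y$, splitting $\langle a_{i,t+1},z_{i,t+1}-\hat y\rangle$ so that the Cauchy--Schwarz bound on $\|a_{i,t+1}\|$ together with $-\|x_{i,t}-z_{i,t+1}\|^2/(2\alpha_t)$ yields $\bar\Delta_t$, converting $-\|\hat y-z_{i,t+1}\|^2$ into $-\|\hat y-x_{i,t+1}\|^2$ via \eqref{Algorithm1-eq2}, double stochasticity and the compression estimate \eqref{Lemma7-proof-eq9}, and the $G_1R(\mathbb{X})\xi_t$ and $pLR(\mathbb{X})\delta_t$ corrections for the smoothed loss gradient are all exactly what the paper does. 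The one place you deviate --- the cross term $\langle\hat\nabla_1 g_{i,t}(x_{i,t})v_{i,t+1},\hat y-x_{i,t}\rangle$ --- is also the place where the paper is on the thinnest ice: in \eqref{lemma7-proof-eq5} it pulls $\mathbf{E}_{\mathfrak{U}_t}$ past $v_{i,t+1}$ on the stated grounds that $v_{i,t+1}$ is independent of $\mathfrak{U}_t$, which is inconsistent with \eqref{Algorithm1-eq4} (and with the paper's own bookkeeping, which records that $v_{i,t+1}$ depends on $\mathcal{U}_t$). Your diagnosis of the correlation between the dual iterate and the gradient estimator is therefore a real and well-spotted issue.

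The problem is that your proposed repair does not prove the lemma as stated. The identity $\hat\nabla_1 g_{i,t}(x_{i,t})v_{i,t+1}=\tfrac{p\gamma_t}{\delta_t}\|[g_{i,t}(x_{i,t}+\delta_tu_{i,t})]_+\|^2u_{i,t}$ and the recognition of this as $\gamma_t$ times a one-point estimator of the convex function $h_{i,t}(x)=\|[g_{i,t}(x)]_+\|^2$ are correct, and convexity of the smoothed $\hat h_{i,t}$ gives $\gamma_t\bigl(\hat h_{i,t}(\hat y)-\hat h_{i,t}(x_{i,t})\bigr)$. But the right-hand side of \eqref{Lemma7-eq1} must contain $v_{i,t+1}^Tg_{i,t}(y)$ with the realized dual variable, because that is the term later combined with Slater's condition in \eqref{Theorem1-proof-iii-eq1} to produce $-\varsigma_s\|[g_{i,t}(e_{i,t})]_+\|_1$ and hence the reduced bound \eqref{theorem1-eq4}. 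Your route replaces it by $\gamma_t\hat h_{i,t}(\hat y)\approx\gamma_t\|[g_{i,t}(y)]_+\|^2\ge 0$, and there is no inequality $\gamma_t\|[g_{i,t}(y)]_+\|^2\le v_{i,t+1}^Tg_{i,t}(y)+G_2(R(\mathbb{X})\xi_t+\delta_t)\|v_{i,t+1}\|$ for general $y\in\mathbb{X}$: the multiplier $[g_{i,t}(e_{i,t})]_+$ is evaluated at a point far from $y$, so different components can be active and $v_{i,t+1}^Tg_{i,t}(y)$ can be far more negative than $-\|[g_{i,t}(y)]_+\|^2$ permits; at the Slater point your bound degenerates to $0$ where the lemma supplies a strictly negative quantity. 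So your argument yields a genuinely weaker per-iteration inequality --- enough for \eqref{theorem1-eq2}--\eqref{theorem1-eq3} but not for \eqref{theorem1-eq4} --- and to obtain \eqref{Lemma7-eq1} verbatim you must keep $v_{i,t+1}$ paired with $g_{i,t}(\cdot)$ throughout, which forces you to confront the correlation you identified rather than route around it.
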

\begin{proof}
Due to $\hat y = ( {1 - {\xi _t}} )y$, for any $t \in {\mathbb{N}_ + }$, we have
\begin{flalign}
\nonumber
v_{i,t + 1}^T{{\hat g}_{i,t}}( {\hat y} ) &\le v_{i,t + 1}^T{g_{i,t}}( {\hat y} ) + {G_2}{\delta _t}\| {{v_{i,t + 1}}} \| \\
\nonumber
& = v_{i,t + 1}^T\big( {{g_{i,t}}( y ) + {g_{i,t}}( {\hat y} ) - {g_{i,t}}( y )} \big) + {G_2}{\delta _t}\| {{v_{i,t + 1}}} \| \\
\nonumber
& \le v_{i,t + 1}^T{g_{i,t}}( y ) + {G_2}\| {\hat y - y} \|\| {{v_{i,t + 1}}} \| + {G_2}{\delta _t}\| {{v_{i,t + 1}}} \| \\
& \le v_{i,t + 1}^T{g_{i,t}}( y ) + {G_2}\big( {R( \mathbb{X} ){\xi _t} + {\delta _t}} \big)\| {{v_{i,t + 1}}} \|, \label{lemma7-proof-eq1}
\end{flalign}
where the first inequality holds due to \eqref{lemma5-eq5}, the second inequality holds due to \eqref{ass4-eq2}, and the last inequality holds due to \eqref{ass1-eq1}.
\begin{flalign}
\nonumber
\langle {\nabla {{\hat f}_{i,t}}( {{x_{i,t}}} ),{x_{i,t}} - \hat y} \rangle
\nonumber
&  = \langle {{\mathbf{E}_{{\mathfrak{U}_t}}}[ {\hat \nabla_1 {f_{i,t}}( {{x_{i,t}}} )} ],{x_{i,t}} - \hat y} \rangle \\
\nonumber
&  = {\mathbf{E}_{{\mathfrak{U}_t}}}[ {\langle {\hat \nabla_1 {f_{i,t}}( {{x_{i,t}}} ),{x_{i,t}} - \hat y} \rangle } ] \\
\nonumber
&  = {\mathbf{E}_{{\mathfrak{U}_t}}}[ {\langle {\hat \nabla_1 {f_{i,t}}( {{x_{i,t}}} ),{x_{i,t}} - z_{i,t + 1}} \rangle  + \langle {\hat \nabla_1 {f_{i,t}}( {{x_{i,t}}} ),{z_{i,t + 1}} - \hat y} \rangle } ] \\
& \le {\mathbf{E}_{{\mathfrak{U}_t}}}[ {\frac{{p{F_1}}}{{{\delta _t}}}\| {x_{i,t} - z_{i,t + 1}} \| + \langle {\hat \nabla_1 {f_{i,t}}( {{x_{i,t}}} ),{z_{i,t + 1}} - \hat y} \rangle } ], \label{lemma7-proof-eq2}
\end{flalign}
where the first equality holds due to \eqref{lemma5-eq1}, the second equality holds since ${x_{i,t}}$ and $\hat y$ are independent of ${\mathfrak{U}_t}$, and the last inequality holds due to the Cauchy-Schwarz inequality and \eqref{lemma5-eq2}.

From \eqref{Algorithm1-eq5}, we have
\begin{flalign}
\nonumber
\langle {\hat \nabla_1 {f_{i,t}}( {{x_{i,t}}} ),{z_{i,t + 1}} - \hat y} \rangle
&  = \langle {\hat \nabla_1 {g_{i,t}}( {{x_{i,t}}} ){v_{i,t + 1}},\hat y - {z_{i,t + 1}}} \rangle + \langle {{{ a }_{i,t + 1}},{z_{i,t + 1}} - \hat y} \rangle \\
\nonumber
&  = \langle {\hat \nabla_1 {g_{i,t}}( {{x_{i,t}}} ){v_{i,t + 1}},\hat y - {x_{i,t}}} \rangle + \langle {\hat \nabla_1 {g_{i,t}}( {{x_{i,t}}} ){v_{i,t + 1}},{x_{i,t}} - {z_{i,t + 1}}} \rangle \\
&\;\;\;\;  + \langle {{{ a }_{i,t + 1}},{z_{i,t + 1}} - \hat y} \rangle. \label{lemma7-proof-eq3}
\end{flalign}
Since ${\hat{g}_{i,t}}$ is convex on $( {1 - {\xi _t}} )\mathbb{X}$, we have
\begin{flalign}
{g_{i,t}}( \hat{y} ) \ge {g_{i,t}}( x ) + \nabla {g_{i,t}}( x )( {\hat{y} - x} ), \forall x,\hat{y} \in ( {1 - {\xi _t}} )\mathbb{X}. \label{Lemma7-proof-eq4}
\end{flalign}
We have
\begin{flalign}
\nonumber
{\mathbf{E}_{{\mathfrak{U}_t}}}[ {\langle {\hat \nabla_1 {g_{i,t}}( {{x_{i,t}}} ){v_{i,t + 1}},\hat y - {x_{i,t}}} \rangle } ]
&  = \langle {{\mathbf{E}_{{\mathfrak{U}_t}}}[ {\hat \nabla_1 {g_{i,t}}( {{x_{i,t}}} )} ]{v_{i,t + 1}},\hat y - {x_{i,t}}} \rangle  \\
\nonumber
&  = \big\langle {\big(\nabla {{\hat g}_{i,t}}( {{x_{i,t}}} )\big)^T{v_{i,t + 1}},\hat y - {x_{i,t}}} \big\rangle \\
\nonumber
& \le v_{i,t + 1}^T{{\hat g}_{i,t}}( {\hat y} ) - v_{i,t + 1}^T{{\hat g}_{i,t}}( {{x_{i,t}}} ) \\
& \le v_{i,t + 1}^T{g_{i,t}}( y ) - v_{i,t + 1}^T{g_{i,t}}( {{x_{i,t}}} ) + {G_2}\big( {R( \mathbb{X} ){\xi _t} + {\delta _t}} \big)\| {{v_{i,t + 1}}} \|, \label{lemma7-proof-eq5}
\end{flalign}
where the first equality holds since $v_{i,t + 1}$, $\hat y$, and ${x_{i,t}}$ are independent of ${\mathfrak{U}_t}$, the last equality holds due to \eqref{lemma5-eq4}, 
the first inequality holds due to ${v_{i,t+1}} \ge {\mathbf{0}_{{m_i}}}$, ${x_{i,t}},\hat y \in ( {1 - {\xi _t}} )\mathbb{X}$ and \eqref{Lemma7-proof-eq4}, 
and the last inequality holds due to \eqref{lemma5-eq5} and \eqref{lemma7-proof-eq1}.

From the Cauchy-Schwarz inequality and \eqref{lemma5-eq6}, we have
\begin{flalign}
\big\langle {{ {\hat{\nabla}_1 {g_{i,t}}( {{x_{i,t}}} )} }{v_{i,t + 1}},{x_{i,t}} - {z_{i,t + 1}}} \big\rangle  \le \frac{{p{F_2}}}{{{\delta _t}}}\| {{v_{i,t + 1}}} \|\| {x_{i,t}} - {z_{i,t + 1}} \|. \label{lemma7-proof-eq6}
\end{flalign}
By applying \eqref{Lemma3-eq1} to the update \eqref{Algorithm1-eq6}, for all $i \in [ n ]$, we have
\begin{flalign}
\nonumber
&\;\;\;\;\;\langle {{a_{i,t + 1}},{z_{i,t + 1}} - \hat y} \rangle \\
\nonumber
& \le \frac{1}{{2{\alpha _t}}}\big( {{{\| {\hat y - {x_{i,t}}} \|}^2} - {{\| {\hat y - {z_{i,t + 1}}} \|}^2} - {{\| {{x_{i,t}} - {z_{i,t + 1}}} \|}^2}} \big) \\
\nonumber
& = \frac{1}{{2{\alpha _t}}}\big( {{{\| {\hat y - {x_{i,t}}} \|}^2} - {{\| {\hat y - {x_{i,t + 1}}} \|}^2} + {{\| {\hat y - {x_{i,t + 1}}} \|}^2} - {{\| {\hat y - {z_{i,t + 1}}} \|}^2} - {{\| {{x_{i,t}} - {z_{i,t + 1}}} \|}^2}} \big) \\
\nonumber
&  = {\Delta _{i,t}}( {\hat y} ) + \frac{1}{{2{\alpha _t}}}\big( {{{\| {\hat y - {x_{i,t + 1}}} \|}^2} - {{\| {\hat y - {z_{i,t + 1}}} \|}^2} - {{\| {{x_{i,t}} - {z_{i,t + 1}}} \|}^2}} \big) \\
\nonumber
& = {\Delta _{i,t}}( {\hat y} ) + \frac{1}{{2{\alpha _t}}}\Big( {{{\Big\| {\hat y - \sum\limits_{j = 1}^n {{{[ {{W_{t + 1}}} ]}_{ij}}{{\hat z}_{j,t + 1}^i}} } \Big\|^2}} - {{\| {\hat y - {z_{i,t + 1}}} \|}^2} - {{\| {{x_{i,t}} - {z_{i,t + 1}}} \|}^2}} \Big) \\
& \le {\Delta _{i,t}}( {\hat y} ) + \frac{1}{{2{\alpha _t}}}\Big( {\sum\limits_{j = 1}^n {{{[ {{W_{t + 1}}} ]}_{ij}}{{\| {\hat y - {{\hat z}_{j,t + 1}}} \|}^2}}  - {{\| {\hat y - {z_{i,t + 1}}} \|}^2} - {{\| {{x_{i,t}} - {z_{i,t + 1}}} \|}^2}} \Big) \label{Lemma7-proof-eq7}
\end{flalign}
where the last equality holds due to \eqref{Algorithm1-eq2}, and the last inequality holds since ${\hat{z}_{j,t}^i}={\hat{z}_{j,t}}$, ${W_{t + 1}}$ is doubly stochastic and ${\|  \cdot  \|^2}$ is convex.

We have
\begin{flalign}
\nonumber
&\;\;\;\;\; {{\mathbf{E}}[ {{{\| {\hat y - {{\hat z}_{i,t + 1}}} \|}^2}} ]} - {{\mathbf{E}}[ {{{\| {\hat y - {z_{i,t + 1}}} \|}^2}} ]}\\
\nonumber
& = {{\mathbf{E}}\big[ {{{\| {\hat y - {{\hat z}_{i,t + 1}}} \|}^2} - {{\| {\hat y - {z_{i,t + 1}}} \|}^2}} ]} \\
\nonumber
& = {{\mathbf{E}}[ {\langle {\hat y - {{\hat z}_{i,t + 1}} + \hat y - {z_{i,t + 1}},\hat y - {{\hat z}_{i,t + 1}} - \hat y + {z_{i,t + 1}}} \rangle } ]} \\
\nonumber
& \le {{\mathbf{E}}[ {\| {\hat y - {{\hat z}_{i,t + 1}} + \hat y - {z_{i,t + 1}}} \|\| {{z_{i,t + 1}} - {{\hat z}_{i,t + 1}}} \|} ]}\\
& \le 4{R}( \mathbb{X} ){{\mathbf{E}}[ {\| {{z_{i,t + 1}} - {{\hat z}_{i,t + 1}}} \|} ]}, \label{Lemma7-proof-eq8}
\end{flalign}
where the first inequality holds due to the Cauchy–Schwarz inequality, and the last inequality holds due to \eqref{ass1-eq1}.

From \eqref{Lemma7-proof-eq9}, noting that $C > 0$, we have
\begin{flalign}
{{\mathbf{E}}[ {\| {{z_{i,t + 1}} - {\hat{z}_{i,t + 1}}} \|} ]}
& \le {\sqrt C} s_{t + 1}. \label{Lemma7-proof-eq10}
\end{flalign}

We have
\begin{flalign}
\nonumber
\langle \nabla {f_{i,t}}({x_{i,t}}),{x_{i,t}} - \hat y\rangle
& = \langle \nabla {{\hat f}_{i,t}}({x_{i,t}}),{x_{i,t}} - \hat y\rangle  + \langle \nabla {f_{i,t}}({x_{i,t}}) - \nabla {{\hat f}_{i,t}}({x_{i,t}}),{x_{i,t}} - \hat y\rangle \\
\nonumber
& \le \langle \nabla {{\hat f}_{i,t}}({x_{i,t}}),{x_{i,t}} - \hat y\rangle  + \| {\nabla {f_{i,t}}({x_{i,t}}) - \nabla {{\hat f}_{i,t}}({x_{i,t}})} \|\| {{x_{i,t}} - \hat y} \| \\
& \le \langle \nabla {{\hat f}_{i,t}}({x_{i,t}}),{x_{i,t}} - \hat y\rangle  + 2LR( \mathbb{X} ){\delta _t}, \label{lemma7-proof-eq11}
\end{flalign}
where the first inequality holds due to the Cauchy-Schwarz inequality, and the last inequality holds due to \eqref{lemma5-eq8} and $\hat y = ( {1 - {\xi _t}} )y$, $y \in \mathbb{X}$, and \eqref{ass1-eq1}.

We have
\begin{flalign}
\nonumber
\langle \nabla {f_{i,t}}({x_{i,t}}),{x_{i,t}} - y\rangle
&  = \langle \nabla {f_{i,t}}({x_{i,t}}),{x_{i,t}} - \hat y\rangle  + \langle \nabla {f_{i,t}}({x_{i,t}}),\hat y - y\rangle \\
\nonumber
& \le \langle \nabla {f_{i,t}}({x_{i,t}}),{x_{i,t}} - \hat y\rangle  + \| {\nabla {f_{i,t}}({x_{i,t}})} \|\| {\hat y - y} \| \\
& \le \langle \nabla {f_{i,t}}({x_{i,t}}),{x_{i,t}} - \hat y\rangle  + {G_1}R( \mathbb{X} ){\xi _t}, \label{lemma7-proof-eq12}
\end{flalign}
where the first inequality holds due to the Cauchy--Schwarz inequality, and the last inequality holds due to \eqref{ass4-eq1a} and $\hat y = ( {1 - {\xi _t}} )y$, $y \in \mathbb{X}$, and \eqref{ass1-eq1}.

Combining \eqref{lemma7-proof-eq2}--\eqref{lemma7-proof-eq3}, \eqref{lemma7-proof-eq5}--\eqref{Lemma7-proof-eq8}, and \eqref{Lemma7-proof-eq10}--\eqref{lemma7-proof-eq12}, summing over $i \in [ n ]$, dividing by $n$,
taking expectation, using $\sum\nolimits_{i = 1}^n {{{[ {{W_t}} ]}_{ij}}}  = 1$, $\forall t \in {\mathbb{N}_ + }$, and noting that $\{ {{s_t}} \}$ is decreasing, 
we rearrange the terms to obtain \eqref{Lemma7-eq1}. 
\end{proof}

We analyze local regret and squared cumulative constraint violation over $T$ iterations in the following lemma.
\begin{lemma}
Suppose Assumptions~1--2 and 4--5 hold. For all $i \in [ n ]$, let $\{ {{x_{i,t}}} \}$ be the sequences generated by Algorithm~1 
with ${\alpha _t}{\gamma _t} \le \delta _t^2/2{p^2}F_2^2$, ${\alpha _t}{\gamma _t}$ be nonincreasing, $y$ be an arbitrary point in $\mathbb{X}$, and $\hat y = ( {1 - {\xi _t}} )y$. 
Then, for any $T \in {\mathbb{N}_ + }$,
\begin{subequations}
\begin{flalign}
\nonumber
&\;\;\;\;\;\frac{1}{n}\sum\limits_{i = 1}^n {\sum\limits_{t = 1}^T {{\mathbf{E}}[ {\langle {\nabla {f_{i,t}}( {{x_{i,t}}} ),{x_{i,t}} - y} \rangle } ]} } \\
& \le  - \frac{1}{n}\sum\limits_{i = 1}^n {\sum\limits_{t = 1}^T {\frac{{\mathbf{E}[ {{{\| {{x_{i,t}} - {z_{i,t + 1}}} \|}^2}} ]}}{{4{\alpha _t}}}} }  
+ \frac{1}{n}\sum\limits_{i = 1}^n {\sum\limits_{t = 1}^T {{{\mathbf{E}}[ {{\Delta _{i,t}}( {\hat y} )} ]}} }  
+ {\Phi _T}, \forall y \in {\mathcal{X}_T}, \label{Lemma8-eq1} \\
&\;\;\;\;\;\sum\limits_{i = 1}^n {\sum\limits_{t = 1}^T {{\mathbf{E}}\big[ {{{\| {{{[ {{g_{i,t}}( {{x_{i,t}}} )} ]}_ + }} \|}^2}} \big]} } 
 \le 2{\mathbf{E}}[ {{\Lambda _T}( y )} ] + {\Psi _T}, \label{Lemma8-eq2}
\end{flalign}
\end{subequations}
where
\begin{flalign}
\nonumber
&{\Phi _T} = 2{p^2}F_1^2\sum\limits_{t = 1}^T {\frac{{{\alpha _t}}}{{\delta _t^2}}}   
+ {F_2}{G_2}R( \mathbb{X} )\sum\limits_{t = 1}^T {{\gamma _t}{\xi _t}}  + 2{F_2}{G_2}\sum\limits_{t = 1}^T {{\gamma _t}{\delta _t}} + {G_1}R( \mathbb{X} )\sum\limits_{t = 1}^T {{\xi _t}} + 2LR( \mathbb{X} )\sum\limits_{t = 1}^T {{\delta _t}} \\
\nonumber
&\;\;\;\;\;\; + 2\sqrt C R( \mathbb{X} )\sum\limits_{t = 1}^T {\frac{{{s_t}}}{{{\alpha _t}}}},\\
\nonumber
&{\Lambda _T}( y ) = \sum\limits_{i = 1}^n {\sum\limits_{t = 1}^T {\frac{{v_{i,t + 1}^T{g_{i,t}}( y )}}{{{\gamma _t}}}} }, \\
\nonumber
&{\Psi _T} = \frac{{4nR{{( \mathbb{X} )}^2}}}{{{\alpha _T}{\gamma _T}}} + 2n{p^2}F_1^2\sum\limits_{t = 1}^T {\frac{{{\alpha _t}}}{{{\gamma _t}\delta _t^2}}}  
+ 4n{G_1}R( \mathbb{X} )\sum\limits_{t = 1}^T {\frac{1}{{{\gamma _t}}}}  
+ 2n{G_1}R( \mathbb{X} )\sum\limits_{t = 1}^T {\frac{{{\xi _t}}}{{{\gamma _t}}}}  + 4nLR( \mathbb{X} )\sum\limits_{t = 1}^T {\frac{{{\delta _t}}}{{{\gamma _t}}}} \\
\nonumber
&\;\;\;\;\;\; + 2n{F_2}{G_2}R( \mathbb{X} )\sum\limits_{t = 1}^T {{\xi _t}}  
+ 4n{F_2}{G_2}\sum\limits_{t = 1}^T {{\delta _t}} 
 + 4n\sqrt C R( \mathbb{X} )\sum\limits_{t = 1}^T {\frac{{{s_t}}}{{{\alpha _t}{\gamma _t}}}}.
\end{flalign}
\end{lemma}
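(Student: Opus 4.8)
The plan is to derive both \eqref{Lemma8-eq1} and \eqref{Lemma8-eq2} from the one-step estimate \eqref{Lemma7-eq1} of Lemma~7, the two proofs differing only in how the two ``useful'' quantities on the left of \eqref{Lemma7-eq1} --- the penalty term $\tfrac1n\sum_i\mathbf{E}[v_{i,t+1}^Tg_{i,t}(x_{i,t})]$ and the regret term $\tfrac1n\sum_i\mathbf{E}[\langle\nabla f_{i,t}(x_{i,t}),x_{i,t}-y\rangle]$ --- are exploited. Two common ingredients are prepared first. For $\bar\Delta_t$: viewing $(\tfrac{pF_1}{\delta_t}+\tfrac{pF_2}{\delta_t}\|v_{i,t+1}\|)\|x_{i,t}-z_{i,t+1}\|-\tfrac{\|x_{i,t}-z_{i,t+1}\|^2}{2\alpha_t}$ as a concave quadratic in $\|x_{i,t}-z_{i,t+1}\|$ and completing the square yields $\bar\Delta_t\le\sum_i 2\alpha_t\tfrac{p^2F_1^2}{\delta_t^2}+\sum_i 2\alpha_t\tfrac{p^2F_2^2}{\delta_t^2}\|v_{i,t+1}\|^2-\sum_i\tfrac{\|x_{i,t}-z_{i,t+1}\|^2}{4\alpha_t}$, whereas using Young's inequality so as to consume the whole $-\tfrac{1}{2\alpha_t}$ yields $\bar\Delta_t\le\sum_i\alpha_t\tfrac{p^2F_1^2}{\delta_t^2}+\sum_i\alpha_t\tfrac{p^2F_2^2}{\delta_t^2}\|v_{i,t+1}\|^2$. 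Next, from $\|v_{i,t+1}\|=\gamma_t\|[g_{i,t}(x_{i,t}+\delta_t u_{i,t})]_+\|$, the identity $[a]_+^Tb=\|[a]_+\|^2+[a]_+^T(b-a)\ge\|[a]_+\|^2-\|[a]_+\|\,\|a-b\|$, and \eqref{ass4-eq2}, one gets $\tfrac{\|v_{i,t+1}\|^2}{\gamma_t}\le v_{i,t+1}^Tg_{i,t}(x_{i,t})+G_2\delta_t\|v_{i,t+1}\|$; combined with the hypothesis $\alpha_t\gamma_t\le\delta_t^2/(2p^2F_2^2)$, this converts the $\|v_{i,t+1}\|^2$ terms produced by $\bar\Delta_t$ into (a fraction of) the penalty term $v_{i,t+1}^Tg_{i,t}(x_{i,t})$.

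For \eqref{Lemma8-eq1} I would substitute the first $\bar\Delta_t$ bound into \eqref{Lemma7-eq1}. Since $2\alpha_t\tfrac{p^2F_2^2}{\delta_t^2}\|v_{i,t+1}\|^2\le\tfrac{\|v_{i,t+1}\|^2}{\gamma_t}\le v_{i,t+1}^Tg_{i,t}(x_{i,t})+G_2\delta_t\|v_{i,t+1}\|$, the penalty term coming from $\bar\Delta_t$ cancels exactly the penalty term on the left of \eqref{Lemma7-eq1}; the leftover $-\tfrac{\|x_{i,t}-z_{i,t+1}\|^2}{4\alpha_t}$ survives, and for $y\in\mathcal{X}_T$ the term $v_{i,t+1}^Tg_{i,t}(y)\le0$ (as $g_{i,t}(y)\le\mathbf{0}_{m_i}$ and $v_{i,t+1}\ge\mathbf{0}_{m_i}$) is dropped. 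Bounding $\|v_{i,t+1}\|\le\gamma_tF_2$ (from \eqref{ass2-eq1b} and nonexpansiveness of $[\cdot]_+$) in the residual $\|v_{i,t+1}\|$-terms and summing over $t\in[T]$ then assembles $\Phi_T$ exactly.

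For \eqref{Lemma8-eq2} I would first reduce the squared violation at $x_{i,t}$ to the penalty term: from $\|v_{i,t+1}\|=\gamma_t\|[g_{i,t}(x_{i,t}+\delta_t u_{i,t})]_+\|$, the identity above, nonexpansiveness of $[\cdot]_+$, \eqref{ass4-eq2}, and $\|[g_{i,t}(x_{i,t})]_+\|\le\|g_{i,t}(x_{i,t})\|\le F_2$ from \eqref{ass2-eq1b} (used to \emph{linearize} the cross term rather than absorb it), one obtains $\|[g_{i,t}(x_{i,t})]_+\|^2\le\tfrac{v_{i,t+1}^Tg_{i,t}(x_{i,t})}{\gamma_t}+\mathcal{O}(\delta_t)$. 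To bound $\sum_i\sum_t\mathbf{E}[v_{i,t+1}^Tg_{i,t}(x_{i,t})]/\gamma_t$, I would use \eqref{Lemma7-eq1} with the second $\bar\Delta_t$ bound, move its regret term to the right (bounded below by $-G_1(2R(\mathbb{X})+\sqrt{C}s_t)$ via $\|\nabla f_{i,t}\|\le G_1$, $\|y\|\le R(\mathbb{X})$, and $\mathbf{E}[\|x_{i,t}\|]\le R(\mathbb{X})+\sqrt{C}s_t$ which follows from \eqref{Lemma7-proof-eq9}), and note that now $\alpha_t\tfrac{p^2F_2^2}{\delta_t^2}\|v_{i,t+1}\|^2\le\tfrac12\tfrac{\|v_{i,t+1}\|^2}{\gamma_t}\le\tfrac12 v_{i,t+1}^Tg_{i,t}(x_{i,t})+\tfrac12 G_2\delta_t\|v_{i,t+1}\|$, so only \emph{half} the penalty on the left of \eqref{Lemma7-eq1} is absorbed while half survives. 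Dividing by $\gamma_t$, summing over $t\in[T]$, applying Abel summation to the telescoping $\Delta_{i,t}(\hat y)/\gamma_t$ terms (here ``$\alpha_t\gamma_t$ nonincreasing'' and boundedness of $\mathbb{X}$, together with the bounded total variation of $\{\xi_t\}$, produce the term $\tfrac{4nR(\mathbb{X})^2}{\alpha_T\gamma_T}$ of $\Psi_T$), collecting the $v_{i,t+1}^Tg_{i,t}(y)/\gamma_t$ terms into $\Lambda_T(y)$, and again bounding $\|v_{i,t+1}\|\le\gamma_tF_2$ elsewhere, assembles $2\mathbf{E}[\Lambda_T(y)]+\Psi_T$.

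I expect the main obstacle to be \eqref{Lemma8-eq2}: one must choose the right $\bar\Delta_t$ estimate in each case and track $\alpha_t\gamma_t\le\delta_t^2/(2p^2F_2^2)$ so that a strictly positive multiple of $v_{i,t+1}^Tg_{i,t}(x_{i,t})/\gamma_t$ remains after $\bar\Delta_t$ is absorbed, and --- crucially --- linearize $\|[g_{i,t}(x_{i,t})]_+\|^2$ against $v_{i,t+1}^Tg_{i,t}(x_{i,t})$ using \eqref{ass2-eq1b} without losing a constant factor, since otherwise the coefficient of $\mathbf{E}[\Lambda_T(y)]$ would be too large for the subsequent Slater-condition argument. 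By comparison, \eqref{Lemma8-eq1} is essentially a direct substitution into \eqref{Lemma7-eq1}.
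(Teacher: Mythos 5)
Your proposal is correct and follows essentially the same route as the paper's proof: substitute the one-step bound of Lemma~7, apply Young's inequality retaining $1/(4\alpha_t)$ for \eqref{Lemma8-eq1} (resp.\ consuming the full $1/(2\alpha_t\gamma_t)$ after dividing by $\gamma_t$ for \eqref{Lemma8-eq2}), absorb the resulting $\|v_{i,t+1}\|^2$ terms into the full (resp.\ half the) penalty term via $\alpha_t\gamma_t\le\delta_t^2/(2p^2F_2^2)$ and $v_{i,t+1}=\gamma_t[g_{i,t}(e_{i,t})]_+$, and then linearize $\|[g_{i,t}(x_{i,t})]_+\|^2$ against $v_{i,t+1}^Tg_{i,t}(x_{i,t})/\gamma_t$ up to an $O(\delta_t)$ error before the weighted telescoping of $\Delta_{i,t}(\hat y)/\gamma_t$ under the monotonicity of $\alpha_t\gamma_t$. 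The only deviations are cosmetic constant-tracking choices (e.g., the paper simply bounds $\langle\nabla f_{i,t}(x_{i,t}),y-x_{i,t}\rangle\le 2G_1R(\mathbb{X})$ without your $\sqrt{C}s_t$ correction), which do not alter the argument.
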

\begin{proof}
(i)
Since ${g_{i,t}}( y ) \le {\mathbf{0}_{{m_i}}}$, $\forall i \in [ n ]$, $\forall t \in {\mathbb{N}_ + }$ when $ y \in {\mathcal{X}_T}$, summing \eqref{Lemma7-eq1} over $t \in [ T ]$ gives
\begin{flalign}
\nonumber
&\;\;\;\;\;\frac{1}{n}\sum\limits_{i = 1}^n {\sum\limits_{t = 1}^T {{\mathbf{E}}[ {\langle {\nabla {f_{i,t}}( {{x_{i,t}}} ),{x_{i,t}} - y} \rangle } ]} } \\
\nonumber
& \le \frac{1}{n}\sum\limits_{i = 1}^n {\sum\limits_{t = 1}^T {{\mathbf{E}}[ { - v_{i,t + 1}^T{g_{i,t}}( {{x_{i,t}}} ) 
+ \frac{{{{\bar \Delta }_t}}}{n}  } ]} }  
+ \frac{1}{n}\sum\limits_{i = 1}^n {\sum\limits_{t = 1}^T {{{\mathbf{E}}[ {{\Delta _{i,t}}( {\hat y} )} ]}} } \\
\nonumber
& \;\;+ \frac{1}{n}\sum\limits_{i = 1}^n {\sum\limits_{t = 1}^T {{G_2}\big( {R( \mathbb{X} ){\xi _t} + {\delta _t}} \big){\mathbf{E}}[ {\| {{v_{i,t + 1}}} \|} ]} }  
+ {G_1}R( \mathbb{X} )\sum\limits_{t = 1}^T {{\xi _t}}  
+ 2LR( \mathbb{X} )\sum\limits_{t = 1}^T {{\delta _t}}  \\
&\;\; + 2\sqrt C R( \mathbb{X} )\sum\limits_{t = 1}^T {\frac{{{s_t}}}{{{\alpha _t}}}}. \label{Lemma8-proof-eq1}
\end{flalign}
We have
\begin{flalign}
\Big( {\frac{{p{F_1}}}{{{\delta _t}}} + \frac{{p{F_2}}}{{{\delta _t}}}\| {{v_{i,t + 1}}} \|} \Big)\| {{x_{i,t}} - {z_{i,t + 1}}} \| \le \frac{{2{p^2}F_1^2{\alpha _t}}}{{\delta _t^2}} 
+ \frac{{2{p^2}F_2^2{\alpha _t}}}{{\delta _t^2}}{\| {{v_{i,t + 1}}} \|^2} + \frac{{{{\| {{x_{i,t}} - {z_{i,t + 1}}} \|}^2}}}{{4{\alpha _t}}}. \label{Lemma8-proof-eq2}
\end{flalign}
Let ${e_{i,t}} = {x_{i,t}} + {\delta _t}{u_{i,t}}$. From \eqref{Algorithm1-eq4}, for all $\forall t \in {\mathbb{N}_ + }$, we have
\begin{flalign}
\| {{v_{i,t + 1}}} \| = {\gamma _t}\| {{{[ {{g_{i,t}}( {{e_{i,t}}} )} ]}_ + }} \|. \label{Lemma8-proof-eq3}
\end{flalign}
From \eqref{Lemma8-proof-eq3} and \eqref{ass2-eq1b}, we have
\begin{flalign}
\| {{v_{i,t + 1}}} \| \le {F_2}{\gamma _t}. \label{Lemma8-proof-eq4}
\end{flalign}
From \eqref{Lemma8-proof-eq3} and \eqref{Algorithm1-eq4}, we have
\begin{flalign}
\nonumber
&\;\;\;\;\;\frac{{2{p^2}F_2^2{\alpha _t}}}{{\delta _t^2}}{\| {{v_{i,t + 1}}} \|^2} - v_{i,t + 1}^T{g_{i,t}}( {{x_{i,t}}} ) \\
\nonumber
& = \frac{{2{p^2}F_2^2{\alpha _t}\gamma _t^2}}{{\delta _t^2}}{\| {{{[ {{g_{i,t}}( {{e_{i,t}}} )} ]}_ + }} \|^2} - {\gamma _t}{\big( {{{[ {{g_{i,t}}( {{e_{i,t}}} )} ]}_ + }} \big)^T}{g_{i,t}}( {{x_{i,t}}} ) \\
\nonumber
& = \frac{{2{p^2}F_2^2{\alpha _t}\gamma _t^2}}{{\delta _t^2}}{\| {{{[ {{g_{i,t}}( {{e_{i,t}}} )} ]}_ + }} \|^2} - {\gamma _t}{\big( {{{[ {{g_{i,t}}( {{e_{i,t}}} )} ]}_ + }} \big)^T}\big( {{g_{i,t}}( {{e_{i,t}}} ) + {g_{i,t}}( {{x_{i,t}}} ) - {g_{i,t}}( {{e_{i,t}}} )} \big) \\
\nonumber
& \le \Big( {\frac{{2{p^2}F_2^2{\alpha _t}{\gamma _t}}}{{\delta _t^2}} - 1} \Big){\gamma _t}{\| {{{[ {{g_{i,t}}( {{e_{i,t}}} )} ]}_ + }} \|^2} + {\gamma _t}\| {{{[ {{g_{i,t}}( {{e_{i,t}}} )} ]}_ + }} \|\| {{g_{i,t}}( {{x_{i,t}}} ) - {g_{i,t}}( {{e_{i,t}}} )} \|\\
& \le \Big( {\frac{{2{p^2}F_2^2{\alpha _t}{\gamma _t}}}{{\delta _t^2}} - 1} \Big){\gamma _t}{\| {{{[ {{g_{i,t}}( {{e_{i,t}}} )} ]}_ + }} \|^2} + {F_2}{G_2}{\gamma _t}{\delta _t}, \label{Lemma8-proof-eq5}
\end{flalign}
where the first inequality holds due to the fact that ${\varphi ^T}{[ \varphi  ]_ + } = {\| {{{[ \varphi  ]}_ + }} \|^2}$ for any vector $\varphi $ and the Cauchy-Schwarz inequality, 
and the last inequality holds due to $\| {{u_{i,t}}} \| = 1$, $\forall i \in [ n ]$, $\forall t \in {\mathbb{N}_ + }$.

Due to ${\alpha _t}{\gamma _t} \le \delta _t^2/2{p^2}F_2^2$, we have
\begin{flalign}
\Big( {\frac{{2{p^2}F_2^2{\alpha _t}{\gamma _t}}}{{\delta _t^2}} - 1} \Big){\gamma _t}{\| {{{[ {{g_{i,t}}( {{e_{i,t}}} )} ]}_ + }} \|^2} \le 0. \label{Lemma8-proof-eq6}
\end{flalign}
Combining \eqref{Lemma8-proof-eq1}--\eqref{Lemma8-proof-eq2} and \eqref{Lemma8-proof-eq4}--\eqref{Lemma8-proof-eq6} yields \eqref{Lemma8-eq1}.

(ii)
From the Cauchy--Schwarz inequality and \eqref{ass1-eq1}, we have
\begin{flalign}
\langle {\nabla {f_{i,t}}( {{x_{i,t}}} ),y - {x_{i,t}}} \rangle  \le \| {\nabla {f_{i,t}}( {{x_{i,t}}} )} \|\| {y - {x_{i,t}}} \| \le 2{G_1}R( \mathbb{X} ), \forall y \in \mathbb{X}. \label{Lemma8-proof-ii-eq1}
\end{flalign}
Dividing \eqref{Lemma7-eq1} by ${\gamma _t}$, using \eqref{Lemma8-proof-ii-eq1}, and summing over $t \in [ T ]$ yields
\begin{flalign}
\nonumber
&\;\;\;\;\;\sum\limits_{i = 1}^n {\sum\limits_{t = 1}^T {\frac{{{\mathbf{E}}[ {v_{i,t + 1}^T{g_{i,t}}( {{x_{i,t}}} )} ]}}{{{\gamma _t}}}} } \\
\nonumber
& \le {\mathbf{E}}[ {{\Lambda _T}( y )} ] + {2n{G_1}R( \mathbb{X} )}\sum\limits_{t = 1}^T {\frac{1}{{{\gamma _t}}}}  
 + \sum\limits_{i = 1}^n {\sum\limits_{t = 1}^T {\frac{{{{\mathbf{E}}[ {{\Delta _{i,t}}( {\hat y} )} ]}}}{{{\gamma _t}}}} }  
 + \sum\limits_{t = 1}^T {\frac{{{{\mathbf{E}}[ {{{\bar \Delta }_t}} ]}}}{{{\gamma _t}}}}  \\
\nonumber
&\;\; + \sum\limits_{i = 1}^n {\sum\limits_{t = 1}^T {\frac{{{G_2}\big( {R( \mathbb{X} ){\xi _t} + {\delta _t}} \big){\mathbf{E}}[ {\| {{v_{i,t + 1}}} \|} ]}}{{{\gamma _t}}}} } 
      + n{G_1}R( \mathbb{X} )\sum\limits_{t = 1}^T {\frac{{{\xi _t}}}{{{\gamma _t}}}}  + 2nLR( \mathbb{X} )\sum\limits_{t = 1}^T {\frac{{{\delta _t}}}{{{\gamma _t}}}} \\
&\;\;+ 2n\sqrt C R( \mathbb{X} )\sum\limits_{t = 1}^T {\frac{{{s_t}}}{{{\alpha _t}{\gamma _t}}}}. \label{Lemma8-proof-ii-eq2}
\end{flalign}
We have
\begin{flalign}
\Big( {\frac{{p{F_1}}}{{{\gamma _t}{\delta _t}}} + \frac{{p{F_2}}}{{{\gamma _t}{\delta _t}}}\| {{v_{i,t + 1}}} \|} \Big)\| {{x_{i,t}} - {z_{i,t + 1}}} \| 
\le \frac{{{p^2}F_1^2{\alpha _t}}}{{{\gamma _t}\delta _t^2}} + \frac{{{p^2}F_2^2{\alpha _t}}}{{{\gamma _t}\delta _t^2}}{\| {{v_{i,t + 1}}} \|^2} 
+ \frac{{{{\| {{x_{i,t}} - {z_{i,t + 1}}} \|}^2}}}{{2{\alpha _t}{\gamma _t}}}. \label{Lemma8-proof-ii-eq3}
\end{flalign}
Since ${\alpha _t}{\gamma _t}$ is nonincreasing, from \eqref{ass1-eq1}, we have
\begin{flalign}
\sum\limits_{t = 1}^T {\frac{{{\Delta _{i,t}}( {\hat y} )}}{{{\gamma _t}}}}  
= \sum\limits_{t = 1}^T {\frac{1}{{2{\alpha _t}{\gamma _t}}}} \big( {{{\| {\hat y - {x_{i,t}}} \|}^2} - {{\| {\hat y - {x_{i,t + 1}}} \|}^2}} \big) 
\le \frac{{{{\| {\hat y - {x_{i,1}}} \|}^2}}}{{2{\alpha _T}{\gamma _T}}} \le \frac{{2R{{( \mathbb{X} )}^2}}}{{{\alpha _T}{\gamma _T}}}. \label{Lemma8-proof-ii-eq4}
\end{flalign}

From \eqref{Algorithm1-eq4} and \eqref{Lemma8-proof-eq5}, we have
\begin{flalign}
\nonumber
&\;\;\;\;\;\sum\limits_{i = 1}^n {\sum\limits_{t = 1}^T {\frac{{{\mathbf{E}}[ {v_{i,t + 1}^T{g_{i,t}}( {{x_{i,t}}} )} ]}}{{2{\gamma _t}}}} } \\
\nonumber
& = \frac{1}{2}\sum\limits_{i = 1}^n {\sum\limits_{t = 1}^T {{\mathbf{E}}\big[ {{{\big( {{{[ {{g_{i,t}}( {{e_{i,t}}} )} ]}_ + }} \big)}^T}{g_{i,t}}( {{x_{i,t}}} )} \big]} } \\
\nonumber
& = \frac{1}{2}\sum\limits_{i = 1}^n {\sum\limits_{t = 1}^T {{\mathbf{E}}\big[ {{{\big( {{{[ {{g_{i,t}}( {{e_{i,t}}} )} ]}_ + } 
- {{[ {{g_{i,t}}( {{x_{i,t}}} )} ]}_ + } + {{[ {{g_{i,t}}( {{x_{i,t}}} )} ]}_ + }} \big)}^T}{g_{i,t}}( {{x_{i,t}}} )} \big]} } \\
& \le \frac{1}{2}\sum\limits_{i = 1}^n {\sum\limits_{t = 1}^T {{\mathbf{E}}\big[ {{{\| {{{[ {{g_{i,t}}( {{x_{i,t}}} )} ]}_ + }} \|}^2}} \big]} } 
- \frac{1}{2}n{F_2}{G_2}\sum\limits_{t = 1}^T {{\delta _t}}. \label{Lemma8-proof-ii-eq6}
\end{flalign}

Combining \eqref{Lemma8-proof-eq5}, \eqref{Lemma8-proof-ii-eq2}--\eqref{Lemma8-proof-ii-eq6} and \eqref{Lemma8-proof-eq4}, and noting that ${\alpha _t}{\gamma _t} \le \delta _t^2/2{p^2}F_2^2$, we have \eqref{Lemma8-eq2}.
\end{proof}

We analyze network regret and cumulative constraint violation over $T$ iterations in the following lemma.
\begin{lemma}
Under the same conditions as stated in Lemma~6, and supposing that Assumption~3 holds, for any $T \in {\mathbb{N}_ + }$, it holds that
\begin{subequations}
\begin{flalign}
&\frac{1}{n}\sum\limits_{i = 1}^n {\sum\limits_{t = 1}^T {{\mathbf{E}}[ {\langle {\nabla {f_t}( {{x_{i,t}}} ),{x_{i,t}} - y} \rangle } ]} }
\le \frac{1}{n}\sum\limits_{i = 1}^n {\sum\limits_{t = 1}^T {{{\mathbf{E}}[ {{\Delta _{i,t}}( {\hat y} )} ]}} }  + {{\tilde \Phi }_T}, \forall y \in {\mathcal{X}_T},\label{Lemma9-eq1} \\
&\frac{1}{n}\sum\limits_{i = 1}^n {\sum\limits_{t = 1}^T {{\mathbf{E}}[ {\| {{{[ {{g_t}( {{x_{i,t}}} )} ]}_ + }} \|} ]} }  
\le \sqrt {T{{\tilde \Psi }_T}}, \forall y \in {\mathcal{X}_T}, \label{Lemma9-eq2} \\
&\frac{1}{n}\sum\limits_{i = 1}^n {\sum\limits_{t = 1}^T {{\mathbf{E}}[ {\| {{{[ {{g_t}( {{x_{i,t}}} )} ]}_ + }} \|} ]} }  
\le {\vartheta _5}\sum\limits_{i = 1}^n {\sum\limits_{t = 1}^T {{\mathbf{E}}\big[ {\| {{{[ {{g_{i,t}}( {{e_{i,t}}} )} ]}_ + }} \|} \big]} } + {\Xi _T}, \label{Lemma9-eq3}
\end{flalign}
\end{subequations}
where
\begin{flalign}
\nonumber
&{{\tilde \Phi }_T} = \big( {{G_1} + 2LR( \mathbb{X} )} \big){\vartheta _1} + {\Phi _T} + \big( {{G_1} + 2LR( \mathbb{X} )} \big)\sqrt C {\vartheta _2}\sum\limits_{t = 1}^T {{s_t}}  
+ {{{\big( {{G_1} + 2LR( \mathbb{X} )} \big)}^2}\vartheta _2^2}\sum\limits_{t = 1}^T {{\alpha _t}}, \\
\nonumber
&{{\tilde \Psi }_T} = 2{\Psi _T} + 2G_2^2{\vartheta _3} + 8n{p^2}F_1^2G_2^2{\vartheta _4}\sum\limits_{t = 1}^T {\frac{{\alpha _t^2}}{{\delta _t^2}}}  
+ 8n{p^2}F_2^4G_2^2{\vartheta _4}\sum\limits_{t = 1}^T {\frac{{\alpha _t^2\gamma _t^2}}{{\delta _t^2}}}  + 4n CG_2^2{\vartheta _4}\sum\limits_{t = 1}^T {s_t^2}, \\
\nonumber
&{\vartheta _5} = 1 + \frac{{{\delta _1}{G_2}{\vartheta _4}}}{{4p{F_2}}}, \\
\nonumber
&{\Xi _T} = n{G_1}{\vartheta _1} + n{G_2}\sum\limits_{t = 1}^T {{\delta _t}}  
+ np{F_1}{G_2}{\vartheta _2}\sum\limits_{t = 1}^T {\frac{{{\alpha _t}}}{{{\delta _t}}}}  + n\sqrt C {G_2}{\vartheta _2}\sum\limits_{t = 1}^T {{s_t}}.
\end{flalign}
\end{lemma}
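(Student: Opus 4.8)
The three bounds in \eqref{Lemma9-eq1}--\eqref{Lemma9-eq3} are all obtained by combining the one‑iteration / $T$‑iteration estimates \eqref{Lemma8-eq1}--\eqref{Lemma8-eq2} for the \emph{local} quantities with the disagreement estimates \eqref{Lemma6-eq1}--\eqref{Lemma6-eq2}, using Assumption~4 only to trade the network gradient $\nabla f_t$ for the local gradients $\nabla f_{i,t}$, and using the compression bound \eqref{Lemma7-proof-eq9} (hence \eqref{Lemma7-proof-eq10}) to control the auxiliary‑variable error $\varepsilon_{i,t}^z=\hat z_{i,t+1}-x_{i,t}$.

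For \eqref{Lemma9-eq1} I would first write $\nabla f_t(x_{i,t})=\frac1n\sum_j\nabla f_{j,t}(x_{i,t})$ and, for each pair $(i,j)$, decompose $\langle\nabla f_{j,t}(x_{i,t}),x_{i,t}-y\rangle=\langle\nabla f_{j,t}(x_{j,t}),x_{j,t}-y\rangle+\langle\nabla f_{j,t}(x_{i,t})-\nabla f_{j,t}(x_{j,t}),x_{i,t}-y\rangle+\langle\nabla f_{j,t}(x_{j,t}),x_{i,t}-x_{j,t}\rangle$, bounding the last two terms by $(2LR(\mathbb{X})+G_1)\|x_{i,t}-x_{j,t}\|$ via Assumption~4, \eqref{ass4-eq1a} and \eqref{ass1-eq1}. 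Averaging over $i,j$ and summing over $t$ collapses the left‑hand side, up to these disagreement terms, to $\frac1n\sum_j\sum_t\mathbf{E}[\langle\nabla f_{j,t}(x_{j,t}),x_{j,t}-y\rangle]$, to which \eqref{Lemma8-eq1} applies; it is essential that \eqref{Lemma8-eq1} retains the negative term $-\frac1n\sum_j\sum_t\mathbf{E}[\|x_{j,t}-z_{j,t+1}\|^2]/(4\alpha_t)$. The disagreement terms are then bounded through \eqref{Lemma6-eq1} after writing $\|\varepsilon_{i,t}^z\|\le\|\hat z_{i,t+1}-z_{i,t+1}\|+\|z_{i,t+1}-x_{i,t}\|$ with $\mathbf{E}[\|\hat z_{i,t+1}-z_{i,t+1}\|]\le\sqrt Cs_{t+1}$ from \eqref{Lemma7-proof-eq10}; the $\|z_{i,t+1}-x_{i,t}\|$‑linear contribution is removed by Young's inequality $ab\le a^2/(4\alpha_t)+\alpha_tb^2$, whose first term is exactly absorbed by the retained negative quadratic, leaving an $\mathcal{O}(\sum_t\alpha_t)$ residual; collecting the $\vartheta_1$‑, $\sum_ts_t$‑ and $\sum_t\alpha_t$‑contributions together with $\Phi_T$ gives $\tilde\Phi_T$.

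For \eqref{Lemma9-eq2} and \eqref{Lemma9-eq3} the starting point is $\|[g_t(x_{i,t})]_+\|^2=\sum_j\|[g_{j,t}(x_{i,t})]_+\|^2$ together with the Lipschitz estimate \eqref{ass4-eq2}, nonexpansiveness of $[\cdot]_+$, and $g_{i,t}(y)\le\mathbf{0}_{m_i}$ for $y\in\mathcal{X}_T$ (so that $\Lambda_T(y)\le0$). For \eqref{Lemma9-eq2}, $(a+b)^2\le2a^2+2b^2$ gives $\|[g_t(x_{i,t})]_+\|^2\le2\sum_j\|[g_{j,t}(x_{j,t})]_+\|^2+2G_2^2\sum_j\|x_{i,t}-x_{j,t}\|^2$; the H\"older/Jensen step \eqref{Lemma9-proof-ii-eq11} reduces $\big(\frac1n\sum_i\sum_t\mathbf{E}[\|[g_t(x_{i,t})]_+\|]\big)^2$ to $\frac Tn\sum_i\sum_t\mathbf{E}[\|[g_t(x_{i,t})]_+\|^2]$; then \eqref{Lemma8-eq2} bounds the $g_{j,t}(x_{j,t})$ part by $\Psi_T$ and \eqref{Lemma6-eq2} the disagreement part, with $\mathbf{E}[\|\varepsilon_{i,t}^z\|^2]\le2Cs_{t+1}^2+2\alpha_t^2\|a_{i,t+1}\|^2$ and $\|a_{i,t+1}\|\le pF_1/\delta_t+pF_2^2\gamma_t/\delta_t$ from \eqref{Lemma3-eq2}, \eqref{lemma5-eq2}, \eqref{lemma5-eq6} and \eqref{Lemma8-proof-eq4}; assembling the constants into $\tilde\Psi_T$ and taking square roots yields \eqref{Lemma9-eq2}. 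For \eqref{Lemma9-eq3}, using $\|[g_t(x_{i,t})]_+\|\le\sum_j\|[g_{j,t}(x_{i,t})]_+\|$ and two applications of \eqref{ass4-eq2} gives $\|[g_{j,t}(x_{i,t})]_+\|\le\|[g_{j,t}(e_{j,t})]_+\|+G_2\delta_t+G_2\|x_{i,t}-x_{j,t}\|$ (since $\|x_{j,t}-e_{j,t}\|=\delta_t$); summing, the first‑power disagreement is handled by \eqref{Lemma6-eq1}, where now $\|z_{i,t+1}-x_{i,t}\|\le\alpha_t\big(pF_1/\delta_t+pF_2\gamma_t\|[g_{i,t}(e_{i,t})]_+\|/\delta_t\big)$ by \eqref{Algorithm1-eq4} and \eqref{lemma5-eq2}, \eqref{lemma5-eq6}, and the $\|[g_{i,t}(e_{i,t})]_+\|$‑proportional part (whose coefficient $\alpha_t\gamma_tpF_2/\delta_t\le\delta_1/(2pF_2)$, by $\alpha_t\gamma_t\le\delta_t^2/(2p^2F_2^2)$) is what produces the factor $\vartheta_5$; the remaining $\vartheta_1$‑, $\sum_t\delta_t$‑, $\sum_t\alpha_t/\delta_t$‑ and $\sum_ts_t$‑terms make up $\Xi_T$.

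The delicate point, and the step I expect to be the main obstacle, is \eqref{Lemma9-eq1}: in contrast with the constraint‑violation bounds, which only need triangle‑type estimates, here both the inter‑agent disagreement and the compression artifact $\varepsilon_{i,t}^z$ feed directly into the regret, and the primal‑displacement part $\|z_{i,t+1}-x_{i,t}\|$ of $\varepsilon_{i,t}^z$ cannot be bounded on its own — it must be played off against the negative term $-\|x_{i,t}-z_{i,t+1}\|^2/(4\alpha_t)$ that \eqref{Lemma8-eq1} deliberately kept. Choosing the Young's‑inequality weights so that this cancellation is exact while the leftover is $\mathcal{O}(\alpha_t)$ (hence summable at the rates of Theorem~1), simultaneously with the $\nabla f_t\to\nabla f_{i,t}$ conversion that forces Assumption~4 into the argument, is the bookkeeping on which the whole lemma rests.
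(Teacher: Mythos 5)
Your proposal follows essentially the same route as the paper's proof: the same conversion of $\nabla f_t$ to $\nabla f_{i,t}$ via Assumption~4, the same use of the disagreement bounds \eqref{Lemma6-eq1}--\eqref{Lemma6-eq2} with $\varepsilon_{i,t}^z$ split into the compression error (controlled by \eqref{Lemma7-proof-eq10}) and $\|x_{i,t}-z_{i,t+1}\|$, the same Young's-inequality absorption of the latter into the negative quadratic retained in \eqref{Lemma8-eq1}, and the same H\"older/Jensen passage \eqref{Lemma9-proof-ii-eq11} from squared to first-power violation. The only deviations are at the level of constants: your extra $G_1\|x_{i,t}-x_{j,t}\|$ contribution in part (i) (which the paper's step \eqref{Lemma9-proof-i-eq3} silently drops, so your bookkeeping is in fact the more careful one) and your coefficient $\delta_1/(2pF_2)$ versus the stated $\delta_1/(4pF_2)$ in $\vartheta_5$ only perturb $\tilde\Phi_T$ and $\vartheta_5$ by harmless constant factors that do not affect the orders used in Theorem~1.
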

\begin{proof}
(i)
From ${f_t}( x ) = \frac{1}{n}\sum\nolimits_{j = 1}^n {{f_{j,t}}( x )}$, we have
\begin{flalign}
\nabla {f_t}( x ) = \frac{1}{n}\sum\nolimits_{j = 1}^n {\nabla {f_{j,t}}( x )}. \label{Lemma9-proof-i-eq1}
\end{flalign}
From \eqref{Lemma9-proof-i-eq1}, we have
\begin{flalign}
\nonumber
\sum\limits_{i = 1}^n { {\nabla {f_t}( {{x_{i,t}}} )} }  &= \frac{1}{n}\sum\limits_{i = 1}^n {\sum\limits_{j = 1}^n { {\nabla {f_{j,t}}( {{x_{i,t}}} )} } } \\
\nonumber
& = \frac{1}{n}\sum\limits_{i = 1}^n {\sum\limits_{j = 1}^n { {\nabla {f_{j,t}}( {{x_{j,t}}} )} } } 
 + \frac{1}{n}\sum\limits_{i = 1}^n {\sum\limits_{j = 1}^n \big({{\nabla {f_{j,t}}( {{x_{i,t}}} ) - \nabla {f_{j,t}}( {{x_{j,t}}} )} } }\big) \\
& = \sum\limits_{i = 1}^n { {\nabla {f_{i,t}}( {{x_{i,t}}} )} }  + \frac{1}{n}\sum\limits_{i = 1}^n {\sum\limits_{j = 1}^n \big({{\nabla {f_{j,t}}( {{x_{i,t}}} ) - \nabla {f_{j,t}}( {{x_{j,t}}} )} }\big) }.
\label{Lemma9-proof-i-eq2}
\end{flalign}
From \eqref{Lemma9-proof-i-eq2}, \eqref{ass5-eq1}, \eqref{ass4-eq1a} and \eqref{ass1-eq1}, we have
\begin{flalign}
\nonumber
&\;\;\;\;\;\frac{1}{n}\sum\limits_{i = 1}^n {\sum\limits_{t = 1}^T {{\mathbf{E}}[ {\langle {\nabla {f_t}( {{x_{i,t}}} ),{x_{i,t}} - y} \rangle } ]} } \\
\nonumber
& \le \frac{1}{n}\sum\limits_{i = 1}^n {\sum\limits_{t = 1}^T {\mathbf{E}[ {\langle {\nabla {f_t}( {{x_{i,t}}} ),{{\bar z}_t} - y} \rangle } ]} }  + \frac{1}{{{n^2}}}\sum\limits_{i = 1}^n {\sum\limits_{j = 1}^n {\sum\limits_{t = 1}^T {\mathbf{E}[ {\| {\nabla {f_{j,t}}( {{x_{i,t}}} )} \|\| {{x_{i,t}} - {{\bar z}_t}} \|} ]} } } \\
\nonumber
& \le \frac{1}{n}\sum\limits_{i = 1}^n {\sum\limits_{t = 1}^T {{\mathbf{E}}[ {\langle {\nabla {f_{i,t}}( {{x_{i,t}}} ),{x_{i,t}} - y} \rangle } ]} }  
+ \frac{1}{n^2}\sum\limits_{i = 1}^n {\sum\limits_{j = 1}^n {\sum\limits_{t = 1}^T {{\mathbf{E}}[ {2LR( \mathbb{X} )\| {{x_{i,t}} - {x_{j,t}}} \|} ]} } } \\
& + \frac{{2{G_1}}}{n}\sum\limits_{i = 1}^n {\sum\limits_{t = 1}^T {\mathbf{E}[ {\| {{x_{i,t}} - {{\bar z}_t}} \|} ]} }.
\label{Lemma9-proof-i-eq3}
\end{flalign}

Next, we derive an upper bound of the second term on the right-hand side of \eqref{Lemma9-proof-i-eq3}. 

From \eqref{Algorithm1-eq2}, and noting that ${\hat{z}_{j,t}^i}={\hat{z}_{j,t}}$ and ${W_{t}}$ is doubly stochastic, we have
\begin{flalign}
\frac{{2{G_1}}}{n}\sum\limits_{i = 1}^n {\sum\limits_{t = 1}^T {\mathbf{E}[ {\| {{x_{i,t}} - {{\bar z}_t}} \|} ]} }  \le \frac{{2{G_1}}}{n}\sum\limits_{i = 1}^n {\sum\limits_{t = 1}^T {\mathbf{E}[ {\| {{{\hat z}_{i,t}} - {{\bar z}_t}} \|} ]} }. \label{Lemma9-proof-i-eq6.5}
\end{flalign}
We have
\begin{flalign}
\nonumber
&\;\;\;\;\;\sum\limits_{i = 1}^n {\sum\limits_{t = 1}^T {\mathbf{E}\Big[ {\frac{{{G_1} + 2LR( \mathbb{X} )}}{n}{\vartheta _2}\| {\varepsilon _{i,t}^z} \|} \Big]} } \\
\nonumber
& \le \sum\limits_{i = 1}^n {\sum\limits_{t = 1}^T {\mathbf{E}\Big[ {\frac{{{G_1} + 2LR( \mathbb{X} )}}{n}{\vartheta _2}\| {{z_{i,t + 1}} - {{\hat z}_{i,t + 1}}} \|} \Big]} }  + \sum\limits_{i = 1}^n {\sum\limits_{t = 1}^T {\mathbf{E}\Big[ {\frac{{{G_1} + 2LR( \mathbb{X} )}}{n}{\vartheta _2}\| {{x_{i,t}} - {z_{i,t + 1}}} \|} \Big]} } \\
& \le \big( {{G_1} + 2LR( \mathbb{X} )} \big)\sqrt C {\vartheta _2}\sum\limits_{t = 1}^T {{s_t}}  + \sum\limits_{i = 1}^n {\sum\limits_{t = 1}^T {\mathbf{E}\Big[ {\frac{{{{\big( {{G_1} + 2LR( \mathbb{X} )} \big)}^2}\vartheta _2^2{\alpha _t}}}{n} + \frac{{{{\| {{x_{i,t}} - {z_{i,t + 1}}} \|}^2}}}{{4n{\alpha _t}}}} \Big]} },
\label{Lemma9-proof-i-eq7}
\end{flalign}
where
the last inequality holds since \eqref{Lemma7-proof-eq10} holds and $\{ {{s_t}} \}$ is decreasing.

Combining \eqref{Lemma6-eq1}, \eqref{Lemma9-proof-i-eq3}--\eqref{Lemma9-proof-i-eq7}, \eqref{Lemma8-eq1}, and \eqref{Lemma6-eq3}, we know that \eqref{Lemma9-eq1} holds.

(ii)
We have
\begin{flalign}
\nonumber
{{{\| {{{[ {{g_{i,t}}( {{x_{i,t}}} )} ]}_ + }} \|}^2}} &= {{{\| {{{[ {{g_{i,t}}( {{x_{i,t}}} )} ]}_ + } - {{[ {{g_{i,t}}( {{x_{j,t}}} )} ]}_ + } + {{[ {{g_{i,t}}( {{x_{j,t}}} )} ]}_ + }} \|}^2}}  \\
\nonumber
& \ge \frac{1}{2}{{{\| {{{[ {{g_{i,t}}( {{x_{j,t}}} )} ]}_ + }} \|}^2}}  - {{{\| {{{[ {{g_{i,t}}( {{x_{i,t}}} )} ]}_ + } - {{[ {{g_{i,t}}( {{x_{j,t}}} )} ]}_ + }} \|}^2}}  \\
\nonumber
& \ge \frac{1}{2}{{{\| {{{[ {{g_{i,t}}( {{x_{j,t}}} )} ]}_ + }} \|}^2}}  - {{{\| {{g_{i,t}}( {{x_{i,t}}} ) - {g_{i,t}}( {{x_{j,t}}} )} \|}^2}}  \\
& \ge \frac{1}{2} {{{\| {{{[ {{g_{i,t}}( {{x_{j,t}}} )} ]}_ + }} \|}^2}}  - G_2^2{{{\| {{x_{i,t}} - {x_{j,t}}} \|}^2}},
\label{Lemma9-proof-ii-eq1}
\end{flalign}
where the second inequality holds due to the nonexpansive property of the projection ${[  \cdot  ]_ + }$, and the last inequality holds due to \eqref{ass4-eq2}.

From ${g_t}( x ) = {\rm{col}}( {{g_{1,t}}( x ), \cdot  \cdot  \cdot ,{g_{n,t}}( x )} )$, we have
\begin{flalign}
\sum\limits_{i = 1}^n {\sum\limits_{j = 1}^n {\sum\limits_{t = 1}^T {{{{\| {{{[ {{g_{i,t}}( {{x_{j,t}}} )} ]}_ + }} \|}^2}} } } } 
 = \sum\limits_{i = 1}^n {\sum\limits_{t = 1}^T { {{{\| {{{[ {{g_t}( {{x_{i,t}}} )} ]}_ + }} \|}^2}} } }.
\label{Lemma9-proof-ii-eq2}
\end{flalign}
From \eqref{Lemma9-proof-ii-eq1}--\eqref{Lemma9-proof-ii-eq2}, we have
\begin{flalign}
\nonumber
&\;\;\;\;\;\frac{1}{n}\sum\limits_{i = 1}^n {\sum\limits_{t = 1}^T {{\mathbf{E}}\big[ {{{\| {{{[ {{g_t}( {{x_{i,t}}} )} ]}_ + }} \|}^2}} \big]} }  \\
\nonumber
& \le \frac{2}{n}\sum\limits_{i = 1}^n {\sum\limits_{j = 1}^n {\sum\limits_{t = 1}^T {{\mathbf{E}}\big[ {{{\| {{{[ {{g_{i,t}}( {{x_{i,t}}} )} ]}_ + }} \|}^2}} \big]} } }  
+ \frac{{2G_2^2}}{n}\sum\limits_{i = 1}^n {\sum\limits_{j = 1}^n {\sum\limits_{t = 1}^T {{\mathbf{E}}[ {{{\| {{x_{i,t}} - {x_{j,t}}} \|}^2}} ]} } } \\
& = 2\sum\limits_{i = 1}^n {\sum\limits_{t = 1}^T {{\mathbf{E}}\big[ {{{\| {{{[ {{g_{i,t}}( {{x_{i,t}}} )} ]}_ + }} \|}^2}} \big]} }
+ \frac{{2G_2^2}}{n}\sum\limits_{i = 1}^n {\sum\limits_{j = 1}^n {\sum\limits_{t = 1}^T {{\mathbf{E}}[ {{{\| {{x_{i,t}} - {x_{j,t}}} \|}^2}} ]} } }.
\label{Lemma9-proof-ii-eq3}
\end{flalign}

Next, we derive an upper bound of the second term on the right-hand side of \eqref{Lemma9-proof-ii-eq3}. 

From \eqref{Lemma6-eq2}, we have
\begin{flalign}
\frac{1}{n}\sum\limits_{i = 1}^n {\sum\limits_{j = 1}^n {\sum\limits_{t = 1}^T {{\mathbf{E}}[ {{{\| {{x_{i,t}} - {x_{j,t}}} \|}^2}} ]} } } \le {{ \vartheta }_3} + {{ \vartheta }_4}\sum\limits_{t = 1}^T {\sum\limits_{i = 1}^n {{\mathbf{E}}[{{\| {\varepsilon _{i,t}^z} \|}^2}]} }. \label{Lemma9-proof-ii-eq5}
\end{flalign}

From $\varepsilon _{i,t}^z = {{\hat z}_{i,t + 1}} - {x_{i,t}}$, we have
\begin{flalign}
\nonumber
&\;\;\;\;\;2G_2^2{\vartheta _4}\sum\limits_{i = 1}^n {\sum\limits_{t = 1}^T {{\mathbf{E}}} } [ {{{\| {\varepsilon _{i,t}^z} \|}^2}} ] \\
& \le 4G_2^2{\vartheta _4}\sum\limits_{i = 1}^n {\sum\limits_{t = 1}^T {{\mathbf{E}}} } [ {{{\| {{z_{i,t + 1}} - {x_{i,t}}} \|}^2}} ]
 + 4G_2^2{\vartheta _4}\sum\limits_{i = 1}^n {\sum\limits_{t = 1}^T {{\mathbf{E}}} } [ {{{\| {{z_{i,t + 1}} - {{\hat z}_{i,t + 1}}} \|}^2}} ].
\label{Lemma9-proof-ii-eq6}
\end{flalign}
By applying \eqref{Lemma3-eq2} to the update \eqref{Algorithm1-eq6}, we have
\begin{flalign}
\nonumber
\| {{z_{i,t + 1}} - {x_{i,t}}} \| &\le {\alpha _t}\| {{a_{i,t + 1}}} \| \\
\nonumber
& = {\alpha _t}\big\| {{{\hat \nabla }_1}{f_{i,t}}( {{x_{i,t}}} ) + {{\big( {{{\hat \nabla }_1}{g_{i,t}}( {{x_{i,t}}} )} \big)}^T}{v_{i,t + 1}}} \big\|\\
\nonumber
& \le {\alpha _t}\| {{{\hat \nabla }_1}{f_{i,t}}( {{x_{i,t}}} )} \| + {\alpha _t}\| {{{\hat \nabla }_1}{g_{i,t}}( {{x_{i,t}}} )} \|\| {{v_{i,t + 1}}} \|\\
& \le \frac{{p{F_1}{\alpha _t}}}{{{\delta _t}}} + \frac{{pF_2^2{\alpha _t}{\gamma _t}}}{{{\delta _t}}},
\label{Lemma9-proof-ii-eq7}
\end{flalign}
where the equality holds due to \eqref{Algorithm1-eq5}, the last inequality holds due to \eqref{lemma5-eq2}, \eqref{lemma5-eq6} and \eqref{Lemma8-proof-eq4}.

From \eqref{Lemma9-proof-ii-eq7}, we have
\begin{flalign}
{\| {{z_{i,t + 1}} - {x_{i,t}}} \|^2} \le \frac{{2{p^2}F_1^2\alpha _t^2}}{{\delta _t^2}} + \frac{{2{p^2}F_2^4\alpha _t^2\gamma _t^2}}{{\delta _t^2}}.
\label{Lemma9-proof-ii-eq8}
\end{flalign}

From \eqref{Lemma7-proof-eq9}, and noting that $\{ {{s_t}} \}$ is decreasing, we have
\begin{flalign}
{\mathbf{E}}[ {{{\| {{z_{i,t + 1}} - {{\hat z}_{i,t + 1}}} \|}^2}} ] \le Cs_t^2.
\label{Lemma9-proof-ii-eq9}
\end{flalign}
From ${g_{i,t}}( y ) \le {\mathbf{0}_{{m_i}}}$, $\forall i \in [ n ]$, $\forall t \in {\mathbb{N}_ + }$ when $y \in {\mathcal{X}_T}$, we have
\begin{flalign}
{\Lambda _T}( y ) \le 0.
\label{Lemma9-proof-ii-eq10}
\end{flalign}

Using the H\"{o}lder’s inequality, we have
\begin{flalign}
{\Big( {\frac{1}{n}\sum\limits_{i = 1}^n {\sum\limits_{t = 1}^T {{\mathbf{E}}[ {\| {{{[ {{g_t}( {{x_{i,t}}} )} ]}_ + }} \|} ]} } } \Big)^2}
\le \frac{T}{n}\sum\limits_{i = 1}^n {\sum\limits_{t = 1}^T {{\mathbf{E}}[ {{{\| {{{[ {{g_t}( {{x_{i,t}}} )} ]}_ + }} \|}^2}} ]} }. \label{Lemma9-proof-ii-eq11}
\end{flalign}

Combining \eqref{Lemma9-proof-ii-eq3}--\eqref{Lemma9-proof-ii-eq6}, \eqref{Lemma9-proof-ii-eq8}--\eqref{Lemma9-proof-ii-eq11} and \eqref{Lemma8-eq2} yields \eqref{Lemma9-eq2}.

(iii)
We have
\begin{flalign}
\nonumber
\frac{1}{n}\sum\limits_{j = 1}^n {\sum\limits_{t = 1}^T {{\mathbf{E}}\big[ {\| {{{[ {{g_t}( {{x_{j,t}}} )} ]}_ + }} \|} \big]} } 
&\le \frac{1}{n}\sum\limits_{i = 1}^n {\sum\limits_{j = 1}^n {\sum\limits_{t = 1}^T {{\mathbf{E}}\big[ {\| {{{[ {{g_{i,t}}( {{x_{j,t}}} )} ]}_ + }} \|} \big]} } } \\
\nonumber
& = \frac{1}{n}\sum\limits_{i = 1}^n {\sum\limits_{j = 1}^n {\sum\limits_{t = 1}^T {{\mathbf{E}}\big[ {\| {{{[ {{g_{i,t}}( {{x_{i,t}}} )} ]}_ + } + {{[ {{g_{i,t}}( {{x_{j,t}}} )} ]}_ + } 
- {{[ {{g_{i,t}}( {{x_{i,t}}} )} ]}_ + }} \|} \big]} } } \\
\nonumber
& \le \frac{1}{n}\sum\limits_{i = 1}^n {\sum\limits_{j = 1}^n {\sum\limits_{t = 1}^T {{\mathbf{E}}\big[ {\| {{{[ {{g_{i,t}}( {{x_{i,t}}} )} ]}_ + }} \| + \| {{g_{i,t}}( {{x_{i,t}}} ) 
- {g_{i,t}}( {{x_{j,t}}} )} \|} \big]} } } \\
& \le \frac{1}{n}\sum\limits_{i = 1}^n {\sum\limits_{j = 1}^n {\sum\limits_{t = 1}^T {{\mathbf{E}}\big[ {\| {{{[ {{g_{i,t}}( {{x_{i,t}}} )} ]}_ + }} \| 
+ {G_2}\| {{x_{i,t}} - {x_{j,t}}} \|} \big]} } },
\label{Lemma9-proof-iii-eq1}
\end{flalign}
where the first inequality holds due to ${g_t}( x ) = {\rm{col}}( {{g_{1,t}}( x ), \cdot  \cdot  \cdot ,{g_{n,t}}( x )} )$, 
the second inequality holds due to the nonexpansive property of the projection ${[  \cdot  ]_ + }$, and the last inequality holds due to \eqref{ass4-eq2}.

From \eqref{Lemma6-eq1}, we have
\begin{flalign}
\nonumber
&\;\;\;\;\;\frac{1}{n}\sum\limits_{i = 1}^n {\sum\limits_{j = 1}^n {\sum\limits_{t = 1}^T {{\mathbf{E}}[ {\| {{x_{i,t}} - {x_{j,t}}} \|} ]} } } \\
\nonumber
& \le n{\vartheta _1} + {{ \vartheta }_2}\sum\limits_{i = 1}^n {\sum\limits_{t = 1}^T {{\mathbf{E}}[ {\| {{z_{i,t + 1}} - {x_{i,t}}} \|} ]} }  
+ {{ \vartheta }_2}\sum\limits_{i = 1}^n {\sum\limits_{t = 1}^T {{\mathbf{E}}[ {\| {{{ z}_{i,t + 1}} - {\hat z_{i,t + 1}}} \|} ]} } \\
& \le n{\vartheta _1} 
+ {{ \vartheta }_2}\sum\limits_{i = 1}^n {\sum\limits_{t = 1}^T {{\mathbf{E}}[ {\| {{z_{i,t + 1}} - {x_{i,t}}} \|} ]} }    
+ n\sqrt C {{ \vartheta }_2}\sum\limits_{t = 1}^T {{s_t}},
\label{Lemma9-proof-iii-eq2}
\end{flalign}
where the last inequality holds due to $\varepsilon _{i,t}^z = {{\hat z}_{i,t + 1}} - {x_{i,t}}$,
the last inequality holds since \eqref{Lemma7-proof-eq10} holds and $\{ {{s_t}} \}$ is decreasing.

Similar to the way to get \eqref{Lemma9-proof-ii-eq7}, we have
\begin{flalign}
\nonumber
&\;\;\;\;\;\sum\limits_{i = 1}^n {\sum\limits_{t = 1}^T {{\mathbf{E}}[ {\| {{z_{i,t + 1}} - {x_{i,t}}} \|} ]} } \\
\nonumber
& \le \sum\limits_{i = 1}^n {\sum\limits_{t = 1}^T {{\mathbf{E}}[ {{\alpha _t}\| {{{\hat \nabla }_1}{f_{i,t}}( {{x_{i,t}}} )} \| 
+ {\alpha _t}\| {{{\hat \nabla }_1}{g_{i,t}}( {{x_{i,t}}} )} \|\| {{v_{i,t + 1}}} \|} ]} } \\
\nonumber
& \le np{F_1}\sum\limits_{t = 1}^T {\frac{{{\alpha _t}}}{{{\delta _t}}}} 
 + p{F_2}\sum\limits_{i = 1}^n {\sum\limits_{t = 1}^T {\frac{{{\alpha _t}{\gamma _t}}}{{{\delta _t}}}{\mathbf{E}}\big[ {\| {{{[ {{g_{i,t}}( {{e_{i,t}}} )} ]}_ + }} \|} \big]} } \\
\nonumber
&  \le np{F_1}\sum\limits_{t = 1}^T {\frac{{{\alpha _t}}}{{{\delta _t}}}}  
+ p{F_2}\sum\limits_{i = 1}^n {\sum\limits_{t = 1}^T {\frac{{{\delta _t}}}{{4{p^2}F_2^2}}{\mathbf{E}}\big[ {\| {{{[ {{g_{i,t}}( {{e_{i,t}}} )} ]}_ + }} \|} \big]} } \\   
& \le np{F_1}\sum\limits_{t = 1}^T {\frac{{{\alpha _t}}}{{{\delta _t}}}}  
+ \frac{{{\delta _1}}}{{4p{F_2}}}\sum\limits_{i = 1}^n {\sum\limits_{t = 1}^T {{\mathbf{E}}\big[ {\| {{{[ {{g_{i,t}}( {{e_{i,t}}} )} ]}_ + }} \|} \big]} },
\label{Lemma9-proof-iii-eq3}
\end{flalign}
where the second inequality holds due to \eqref{lemma5-eq2}, \eqref{lemma5-eq6} and \eqref{Algorithm1-eq4},
the third inequality holds due to ${\alpha _t}{\gamma _t} \le \delta _t^2/4{p^2}F_2^2$,
the last inequality holds since $\{ {{\delta_t}} \}$ is nonincreasing.

We have
\begin{flalign}
\nonumber
&\;\;\;\;\;\sum\limits_{i = 1}^n {\sum\limits_{t = 1}^T {{\mathbf{E}}\big[ {\| {{{[ {{g_{i,t}}( {{x_{i,t}}} )} ]}_ + }} \|} \big]} } \\
\nonumber
& = \sum\limits_{i = 1}^n {\sum\limits_{t = 1}^T {{\mathbf{E}}\big[ {\| {{{[ {{g_{i,t}}( {{x_{i,t}}} )} ]}_ + } - {{[ {{g_{i,t}}( {{e_{i,t}}} )} ]}_ + } 
+ {{[ {{g_{i,t}}( {{e_{i,t}}} )} ]}_ + }} \|} \big]} } \\
\nonumber
& \le \sum\limits_{i = 1}^n {\sum\limits_{t = 1}^T {{\mathbf{E}}\big[ {\| {{g_{i,t}}( {{x_{i,t}}} ) - {g_{i,t}}( {{e_{i,t}}} )} \| 
+ \| {{{[ {{g_{i,t}}( {{e_{i,t}}} )} ]}_ + }} \|} \big]} } \\
\nonumber
& \le \sum\limits_{i = 1}^n {\sum\limits_{t = 1}^T {{\mathbf{E}}\big[ {{G_2}\| {{\delta _t}{u_{i,t}}} \| + \| {{{[ {{g_{i,t}}( {{e_{i,t}}} )} ]}_ + }} \|} \big]} } \\   
&  = n{G_2}\sum\limits_{t = 1}^T {{\delta _t}}  + \sum\limits_{i = 1}^n {\sum\limits_{t = 1}^T {{\mathbf{E}}\big[ {\| {{{[ {{g_{i,t}}( {{e_{i,t}}} )} ]}_ + }} \|} \big]} },
\label{Lemma9-proof-iii-eq4}
\end{flalign}

Combining \eqref{Lemma9-proof-iii-eq1}--\eqref{Lemma9-proof-iii-eq4} yields \eqref{Lemma9-eq3}.
\end{proof}

\hspace{-3mm}\emph{B. Proof of Theorem~1}

In the following, we prove that \eqref{theorem1-eq2}--\eqref{theorem1-eq4} in Theorem~1 hold in three separate steps.

(i)
For any $T \in {\mathbb{N}_ + }$ and $\theta  \in [ {0,1} )$, we have
\begin{flalign}
\sum\limits_{t = 1}^T {\frac{1}{{{t^\theta }}}}  = \sum\limits_{t = 2}^T {\frac{1}{{{t^\theta }}}}  + 1 \le \int_1^T {\frac{1}{{{t^\theta }}}} dt + 1 \le \frac{{{T^{1 - \theta }}}}{{1 - \theta }}.
\label{Theorem1-proof-i-eq1}
\end{flalign} 
We have
\begin{flalign}
\nonumber
\frac{1}{n}\sum\limits_{i = 1}^n {\sum\limits_{t = 1}^T {{{\mathbf{E}}[ {{\Delta _{i,t}}( {\hat y} )} ]} } }  
& = \frac{1}{{2n}}\sum\limits_{i = 1}^n {\sum\limits_{t = 1}^T {{{\mathbf{E}}\Big[ {\frac{1}{{{\alpha _t}}}\big( {{{\| {\hat y - {x_{i,t}}} \|}^2} 
- {{\| {\hat y - {x_{i,t + 1}}} \|}^2}} \big)} \Big]} } } \\
\nonumber
& \le \frac{1}{{2n}}\sum\limits_{i = 1}^n {\sum\limits_{t = 1}^T {{{\mathbf{E}}\Big[ {\frac{1}{{{\alpha _T}}}\big( {{{\| {\hat y - {x_{i,t}}} \|}^2} 
- {{\| {\hat y - {x_{i,t + 1}}} \|}^2}} \big)} \Big]} } } \\
\nonumber
& = \frac{1}{{2n}}\sum\limits_{i = 1}^n {{{\mathbf{E}}\Big[ {\frac{1}{{{\alpha _T}}}\big( {{{\| {\hat y - {x_{i,1}}} \|}^2} 
- {{\| {\hat y - {x_{i,T + 1}}} \|}^2}} \big)} \Big]}} \\
& \le \frac{1}{{2n{\alpha _T}}}\sum\limits_{i = 1}^n {{{\| {\hat y - {x_{i,1}}} \|}^2}}  \le \frac{{2R{{( \mathbb{X} )}^2}}}{{{\alpha _T}}},
\label{Theorem1-proof-i-eq4}
\end{flalign}
where the first inequality holds since $\{ {{\alpha _t}} \}$ is nonincreasing, and the last inequality holds due to $\hat y = ( {1 - {\xi _t}} )y$, $x_{i,1}$, $y \in \mathbb{X}$, and \eqref{ass1-eq1}.

From \eqref{theorem1-eq1}, we know that ${\alpha _t}{\gamma _t} < \delta _t^2/4{p^2}F_2^2$ due to $2{\theta _3} < {\theta _1} - {\theta _2}$ 
and ${\alpha _t}{\gamma _t}$ is nonincreasing due to ${\theta _1} > {\theta _2}$.

From \eqref{theorem1-eq1} and \eqref{Theorem1-proof-i-eq1}, for any $T \in \mathbb{N}_ +$, we have
\begin{flalign}
&\sum\limits_{t = 1}^T {{s_t}} \le {s_0}\sum\limits_{t = 1}^T {\frac{1}{t}}  \le {s_0}\Big( {\int_1^T {\frac{1}{t}dt}  + 1} \Big) \le  {s_0}{\log ( T ) + {s_0}}, \label{Theorem1-proof-i-eq6}\\
&\sum\limits_{t = 1}^T {\frac{{{s_t}}}{{{\alpha _t}}}}  \le \frac{{{s_0}}}{{{\alpha _0}}}\sum\limits_{t = 1}^T {\frac{1}{{{t^{1 - {\theta _1}}}}}}  \le \frac{{{{s_0}T^{ {\theta _1}}}}}{{{\theta _1} }{\alpha _0}}.
\label{Theorem1-proof-i-eq7}
\end{flalign}
Combining \eqref{Lemma9-eq1}, \eqref{Theorem1-proof-i-eq4}, \eqref{Theorem1-proof-i-eq6}--\eqref{Theorem1-proof-i-eq7} and \eqref{theorem1-eq1}, from \eqref{Theorem1-proof-i-eq1} and the arbitrariness of ${y} \in {\mathcal{X}_T}$, we have
\begin{flalign}
\nonumber
&\;\;\;\;\;\mathbf{E}[{{\rm{Net}\mbox{-}\rm{Reg}}( T )}] \\
\nonumber
&\le \frac{{2R{{( \mathbb{X} )}^2}{T^{{\theta _1}}}}}{{{\alpha _0}}} + \big( {{G_1} + 2LR( \mathbb{X} )} \big){\vartheta _1} 
+  \big( {{G_1} + 2LR( \mathbb{X} )} \big)\sqrt C{\vartheta _2}{s_0}\log ( T ) + \big( {{G_1} + 2LR( \mathbb{X} )} \big){\vartheta _2}{s_0}
 \\
\nonumber
&\;\; + \frac{{{\big( {{G_1} + 2LR( \mathbb{X} )} \big)^2}\vartheta _2^2{\alpha _0}{T^{1 - {\theta _1}}}}}{{1 - {\theta _1}}}
+ \frac{{2{p^2}F_1^2{\alpha _0}{T^{1 - {\theta _1} + 2{\theta _3}}}}}{{( {1 - {\theta _1} + 2{\theta _3}} )r{{( \mathbb{X} )}^2}}}  
+ \frac{{{F_2}{G_2}R( \mathbb{X} )\gamma _0{T^{1 + {\theta _2} - {\theta _3}}}}}{{1 + {\theta _2} - {\theta _3}}} 
 \\
&\;\; + \frac{{2{F_2}{G_2}r( \mathbb{X} )\gamma _0{T^{1 + {\theta _2} - {\theta _3}}}}}{{1 + {\theta _2} - {\theta _3}}}
 + \frac{{{G_1}R( \mathbb{X} ){T^{1 - {\theta _3}}}}}{{1 - {\theta _3}}} + \frac{{2Lr( \mathbb{X} )R( \mathbb{X} ){T^{1 - {\theta _3}}}}}{{1 - {\theta _3}}} 
+ \frac{{2\sqrt C R( \mathbb{X} ){s_0}{T^{{\theta _1}}}}}{{{\theta _1}{\alpha _0}}}.
\label{Theorem1-proof-i-eq8}
\end{flalign}

From \eqref{Theorem1-proof-i-eq8}, we know that \eqref{theorem1-eq2} holds.

(ii)
From \eqref{theorem1-eq1} with ${\theta _4} \in [ {1,1 + {\theta _1} - {\theta _2}} )$ and \eqref{Theorem1-proof-i-eq1}, 
and noting that ${\theta _2} < {\theta _4} - {\theta _1} + {\theta _2} < 1$ and $\{ {{s_t}} \}$ is decreasing, for any $T \in \mathbb{N}_ +$, we have
\begin{flalign}
&\sum\limits_{t = 1}^T {\frac{{{s_t}}}{{{\alpha _t}{\gamma _t}}}}  
\le \frac{{{s_0}{T^{1 + {\theta _1} - {\theta _2} - {\theta _4}}}}}{{( {1 + {\theta _1} - {\theta _2} - {\theta _4}} ){\alpha _0}\gamma _0}}, \label{Theorem1-proof-ii-eq1}\\
&\sum\limits_{t = 1}^T {s_t^2}  \le {s_0}\sum\limits_{t = 1}^T {{s_t}} \le s_0^2\sum\limits_{t = 1}^T {\frac{1}{t}}  \le s_0^2\log ( T ) + s_0^2.
\label{Theorem1-proof-ii-eq2}
\end{flalign}
From \eqref{theorem1-eq1} with $2{\theta _1} - 2{\theta _3} < 1$ and \eqref{Theorem1-proof-i-eq1}, 
and noting that $0 < 2{\theta _1} - 2{\theta _2} - 2{\theta _3} < 1$, we have
\begin{flalign}
&\sum\limits_{t = 1}^T {\frac{{\alpha _t^2}}{{\delta _t^2}}}  
\le \frac{{\alpha _0^2{T^{1 - 2{\theta _1} + 2{\theta _3}}}}}{{( {1 - 2{\theta _1} + 2{\theta _3}} )r{{( \mathbb{X} )}^2}}}, \label{Theorem1-proof-ii-eq3}\\
&\sum\limits_{t = 1}^T {\frac{{\alpha _t^2\gamma _t^2}}{{\delta _t^2}}}  
\le \frac{{\alpha _0^2\gamma _0^2{T^{1 - 2{\theta _1} + 2{\theta _2} + 2{\theta _3}}}}}{{( {1 - 2{\theta _1} + 2{\theta _2} + 2{\theta _3}} )r{{( \mathbb{X} )}^2}}}.
\label{Theorem1-proof-ii-eq4}
\end{flalign}
Combining \eqref{Lemma9-eq2} and \eqref{Theorem1-proof-ii-eq1}--\eqref{Theorem1-proof-ii-eq4}, from \eqref{Theorem1-proof-i-eq1}, we have
\begin{flalign}
\nonumber
&\;\;\;\;\;{( {\frac{1}{n}\sum\limits_{i = 1}^n {\sum\limits_{t = 1}^T {{\mathbf{E}}[ {\| {{{[ {{g_t}( {{x_{i,t}}} )} ]}_ + }} \|} ]} } } )^2} \\
\nonumber
& \le \frac{{8nR{{( \mathbb{X} )}^2}{T^{1 + {\theta _1} - {\theta _2}}}}}{{{\alpha _0}\gamma _0}} + \frac{{4n{p^2}F_1^2{\alpha _0}{T^{2 - {\theta _1} - {\theta _2} 
+ 2{\theta _3}}}}}{{( {1 - {\theta _1} - {\theta _2} + 2{\theta _3}} )r{{( \mathbb{X} )}^2}\gamma _0}} 
+ \frac{{8n{G_1}R( \mathbb{X} ){T^{2 - {\theta _2}}}}}{({1 - {\theta _2}})\gamma _0} + \frac{{4n{G_1}R( \mathbb{X} ){T^{2 - {\theta _2} - {\theta _3}}}}}{({1 - {\theta _2} - {\theta _3}})\gamma _0}\\
\nonumber
&\;\; + \frac{{8nLr( \mathbb{X} )R( \mathbb{X} ){T^{2 - {\theta _2} - {\theta _3}}}}}{({1 - {\theta _2} - {\theta _3}})\gamma _0}  + \frac{{4n{F_2}{G_2}R( \mathbb{X} ){T^{2 - {\theta _3}}}}}{{1 - {\theta _3}}} 
 + \frac{{8n{F_2}{G_2}r( \mathbb{X} ){T^{2 - {\theta _3}}}}}{{1 - {\theta _3}}}\\
\nonumber
&\;\; + \frac{{8n\sqrt C R( \mathbb{X} ){s_0}{T^{2 + {\theta _1} - {\theta _2} - {\theta _4}}}}}{{( {1 + {\theta _1} - {\theta _2} - {\theta _4}} ){\alpha _0}\gamma _0}} 
+ \frac{{8n{p^2}F_1^2G_2^2{\vartheta _4}\alpha _0^2{T^{2 - 2{\theta _1} 
+ 2{\theta _3}}}}}{{( {1 - 2{\theta _1} + 2{\theta _3}} )r{{( \mathbb{X} )}^2}}} + \frac{{8n{p^2}F_2^4G_2^2{\vartheta _4}\alpha _0^2\gamma _0^2{T^{2 - 2{\theta _1} + 2{\theta _2} 
+ 2{\theta _3}}}}}{{( {1 - 2{\theta _1} + 2{\theta _2} + 2{\theta _3}} )r{{( \mathbb{X} )}^2}}}\\
&\;\; + 4nCG_2^2{\vartheta _4}s_0^2 T\log ( T ) + 4nCG_2^2{\vartheta _4}s_0^2 T + 2G_2^2{\vartheta _3}T,
\label{Theorem1-proof-ii-eq14}
\end{flalign}
From \eqref{theorem1-eq1} with $1 \le 2{\theta _1} - 2{\theta _3} < 1 + 2{\theta _2}$ and \eqref{Theorem1-proof-i-eq1}, 
and noting that $0 < 2{\theta _1} - 2{\theta _2} - 2{\theta _3} < 1$, we have
\begin{flalign}
&\sum\limits_{t = 1}^T {\frac{{\alpha _t^2}}{{\delta _t^2}}}  
\le \frac{{\alpha _0^2}}{{r{{( \mathbb{X} )}^2}}}\sum\limits_{t = 1}^T {\frac{1}{t}}  
\le \frac{{\alpha _0^2\log ( T )}}{{r{{( \mathbb{X} )}^2}}} + \frac{{\alpha _0^2}}{{r{{( \mathbb{X} )}^2}}}, \label{Theorem1-proof-ii-eq6}\\
&\sum\limits_{t = 1}^T {\frac{{\alpha _t^2\gamma _t^2}}{{\delta _t^2}}}  
\le \frac{{\alpha _0^2\gamma _0^2{T^{1 - 2{\theta _1} + 2{\theta _2} + 2{\theta _3}}}}}{{( {1 - 2{\theta _1} + 2{\theta _2} + 2{\theta _3}} )r{{( \mathbb{X} )}^2}}}.
\label{Theorem1-proof-ii-eq7}
\end{flalign}
Combining \eqref{Lemma9-eq2}, \eqref{Theorem1-proof-ii-eq1}--\eqref{Theorem1-proof-ii-eq2} and \eqref{Theorem1-proof-ii-eq6}--\eqref{Theorem1-proof-ii-eq7}, from \eqref{Theorem1-proof-i-eq1}, we have
\begin{flalign}
\nonumber
&\;\;\;\;\;{( {\frac{1}{n}\sum\limits_{i = 1}^n {\sum\limits_{t = 1}^T {{\mathbf{E}}[ {\| {{{[ {{g_t}( {{x_{i,t}}} )} ]}_ + }} \|} ]} } } )^2} \\
\nonumber
& \le \frac{{8nR{{( \mathbb{X} )}^2}{T^{1 + {\theta _1} - {\theta _2}}}}}{{{\alpha _0}\gamma _0}} + \frac{{4n{p^2}F_1^2{\alpha _0}{T^{2 - {\theta _1} - {\theta _2} 
+ 2{\theta _3}}}}}{{( {1 - {\theta _1} - {\theta _2} + 2{\theta _3}} )r{{( \mathbb{X} )}^2}\gamma _0}} 
+ \frac{{8n{G_1}R( \mathbb{X} ){T^{2 - {\theta _2}}}}}{({1 - {\theta _2}})\gamma _0} + \frac{{4n{G_1}R( \mathbb{X} ){T^{2 - {\theta _2} - {\theta _3}}}}}{({1 - {\theta _2} - {\theta _3}})\gamma _0}\\
\nonumber
&\;\; + \frac{{8nLr( \mathbb{X} )R( \mathbb{X} ){T^{2 - {\theta _2} - {\theta _3}}}}}{({1 - {\theta _2} - {\theta _3}})\gamma _0}  + \frac{{4n{F_2}{G_2}R( \mathbb{X} ){T^{2 - {\theta _3}}}}}{{1 - {\theta _3}}} 
 + \frac{{8n{F_2}{G_2}r( \mathbb{X} ){T^{2 - {\theta _3}}}}}{{1 - {\theta _3}}}\\
\nonumber
&\;\; + \frac{{8n\sqrt C R( \mathbb{X} ){s_0}{T^{2 + {\theta _1} - {\theta _2} - {\theta _4}}}}}{{( {1 + {\theta _1} - {\theta _2} - {\theta _4}} ){\alpha _0}\gamma _0}} 
+ \frac{{8n{p^2}F_1^2G_2^2{\vartheta _4}\alpha _0^2 T\log ( T )}}{{r{{( \mathbb{X} )}^2}}} + \frac{{8n{p^2}F_1^2G_2^2{\vartheta _4}\alpha _0^2}T}{{r{{( \mathbb{X} )}^2}}} 
+ 2G_2^2{\vartheta _3}T\\
&\;\; + \frac{{8n{p^2}F_2^4G_2^2{\vartheta _4}\alpha _0^2\gamma _0^2{T^{2 - 2{\theta _1} + 2{\theta _2} 
+ 2{\theta _3}}}}}{{( {1 - 2{\theta _1} + 2{\theta _2} + 2{\theta _3}} )r{{( \mathbb{X} )}^2}}} 
+ 4nCG_2^2{\vartheta _4}s_0^2 T\log ( T ) + 4nCG_2^2{\vartheta _4}s_0^2 T,
\label{Theorem1-proof-ii-eq15}
\end{flalign}
From \eqref{theorem1-eq1} with $2{\theta _1} - 2{\theta _2} - 2{\theta _3} \ge 1$ and \eqref{Theorem1-proof-i-eq1}, 
we have
\begin{flalign}
&\sum\limits_{t = 1}^T {\frac{{\alpha _t^2}}{{\delta _t^2}}}  
\le \frac{{\alpha _0^2}}{{r{{( \mathbb{X} )}^2}}}\sum\limits_{t = 1}^T {\frac{1}{t}}  
\le \frac{{\alpha _0^2\log ( T )}}{{r{{( \mathbb{X} )}^2}}} + \frac{{\alpha _0^2}}{{r{{( \mathbb{X} )}^2}}}, \label{Theorem1-proof-ii-eq9}\\
&\sum\limits_{t = 1}^T {\frac{{\alpha _t^2\gamma _t^2}}{{\delta _t^2}}}  
\le \frac{{\alpha _0^2}\gamma _0^2}{{r{{( \mathbb{X} )}^2}}}\sum\limits_{t = 1}^T {\frac{1}{t}}  
\le \frac{{\alpha _0^2\gamma _0^2\log ( T )}}{{r{{( \mathbb{X} )}^2}}} + \frac{{\alpha _0^2}\gamma _0^2}{{r{{( \mathbb{X} )}^2}}}.
\label{Theorem1-proof-ii-eq10}
\end{flalign}
Combining \eqref{Lemma9-eq2}, \eqref{Theorem1-proof-ii-eq1}--\eqref{Theorem1-proof-ii-eq2} and \eqref{Theorem1-proof-ii-eq9}--\eqref{Theorem1-proof-ii-eq10}, from \eqref{Theorem1-proof-i-eq1}, we have
\begin{flalign}
\nonumber
&\;\;\;\;\;{( {\frac{1}{n}\sum\limits_{i = 1}^n {\sum\limits_{t = 1}^T {{\mathbf{E}}[ {\| {{{[ {{g_t}( {{x_{i,t}}} )} ]}_ + }} \|} ]} } } )^2} \\
\nonumber
& \le \frac{{8nR{{( \mathbb{X} )}^2}{T^{1 + {\theta _1} - {\theta _2}}}}}{{{\alpha _0}\gamma _0}} + \frac{{4n{p^2}F_1^2{\alpha _0}{T^{2 - {\theta _1} - {\theta _2} 
+ 2{\theta _3}}}}}{{( {1 - {\theta _1} - {\theta _2} + 2{\theta _3}} )r{{( \mathbb{X} )}^2}\gamma _0}} 
+ \frac{{8n{G_1}R( \mathbb{X} ){T^{2 - {\theta _2}}}}}{({1 - {\theta _2}})\gamma _0} + \frac{{4n{G_1}R( \mathbb{X} ){T^{2 - {\theta _2} - {\theta _3}}}}}{({1 - {\theta _2} - {\theta _3}})\gamma _0} \\
\nonumber
&\;\; + \frac{{8nLr( \mathbb{X} )R( \mathbb{X} ){T^{2 - {\theta _2} - {\theta _3}}}}}{({1 - {\theta _2} - {\theta _3}})\gamma _0}  + \frac{{4n{F_2}{G_2}R( \mathbb{X} ){T^{2 - {\theta _3}}}}}{{1 - {\theta _3}}} 
 + \frac{{8n{F_2}{G_2}r( \mathbb{X} ){T^{2 - {\theta _3}}}}}{{1 - {\theta _3}}} + 2G_2^2{\vartheta _3}T\\
\nonumber
&\;\; + \frac{{8n\sqrt C R( \mathbb{X} ){s_0}{T^{2 + {\theta _1} - {\theta _2} - {\theta _4}}}}}{{( {1 + {\theta _1} - {\theta _2} - {\theta _4}} ){\alpha _0}\gamma _0}} 
+ \frac{{8n{p^2}F_1^2G_2^2{\vartheta _4}\alpha _0^2 T\log ( T )}}{{r{{( \mathbb{X} )}^2}}} + \frac{{8n{p^2}F_1^2G_2^2{\vartheta _4}\alpha _0^2}T}{{r{{( \mathbb{X} )}^2}}} 
+ 4nCG_2^2{\vartheta _4}s_0^2 T \\
&\;\; + \frac{{8n{p^2}F_2^4G_2^2{\vartheta _4}\alpha _0^2 \gamma _0^2T\log ( T )}}{{r{{( \mathbb{X} )}^2}}} 
+ \frac{{8n{p^2}F_2^4G_2^2{\vartheta _4}\alpha _0^2}\gamma _0^2T}{{r{{( \mathbb{X} )}^2}}} + 4nCG_2^2{\vartheta _4}s_0^2 T\log ( T ).
\label{Theorem1-proof-ii-eq16}
\end{flalign}
From \eqref{theorem1-eq1} with ${\theta _4} \ge 1 + {\theta _1} - {\theta _2}$ and \eqref{Theorem1-proof-i-eq1}, 
and noting that $\{ {{s_t}} \}$ is decreasing, for any $T \in \mathbb{N}_ +$, we have
\begin{flalign}
&\sum\limits_{t = 1}^T {\frac{{{s_t}}}{{{\alpha _t}{\gamma _t}}}}  
\le \frac{{{s_0}}}{{{\alpha _0}\gamma _0}}\sum\limits_{t = 1}^T {\frac{1}{t}}  \le \frac{{{s_0}\log ( T )}}{{{\alpha _0}\gamma _0}} + \frac{{{s_0}}}{{{\alpha _0}\gamma _0}}, \label{Theorem1-proof-ii-eq17}\\
&\sum\limits_{t = 1}^T {s_t^2}  \le {s_0}\sum\limits_{t = 1}^T {{s_t}} \le s_0^2\sum\limits_{t = 1}^T {\frac{1}{t}}  \le s_0^2\log ( T ) + s_0^2.
\label{Theorem1-proof-ii-eq18}
\end{flalign}
Similar to the way to get \eqref{Theorem1-proof-ii-eq14}, \eqref{Theorem1-proof-ii-eq15} and \eqref{Theorem1-proof-ii-eq16}, using \eqref{Theorem1-proof-ii-eq17}--\eqref{Theorem1-proof-ii-eq18}, we have
\begin{flalign}
\nonumber
&\;\;\;\;\;{( {\frac{1}{n}\sum\limits_{i = 1}^n {\sum\limits_{t = 1}^T {{\mathbf{E}}[ {\| {{{[ {{g_t}( {{x_{i,t}}} )} ]}_ + }} \|} ]} } } )^2} \\
\nonumber
& \le \frac{{8nR{{( \mathbb{X} )}^2}{T^{1 + {\theta _1} - {\theta _2}}}}}{{{\alpha _0}\gamma _0}} + \frac{{4n{p^2}F_1^2{\alpha _0}{T^{2 - {\theta _1} - {\theta _2} 
+ 2{\theta _3}}}}}{{( {1 - {\theta _1} - {\theta _2} + 2{\theta _3}} )r{{( \mathbb{X} )}^2}\gamma _0}} 
+ \frac{{8n{G_1}R( \mathbb{X} ){T^{2 - {\theta _2}}}}}{({1 - {\theta _2}})\gamma _0} + \frac{{4n{G_1}R( \mathbb{X} ){T^{2 - {\theta _2} - {\theta _3}}}}}{({1 - {\theta _2} - {\theta _3}})\gamma _0}\\
\nonumber
&\;\; + \frac{{8nLr( \mathbb{X} )R( \mathbb{X} ){T^{2 - {\theta _2} - {\theta _3}}}}}{({1 - {\theta _2} - {\theta _3}}\gamma _0} + \frac{{4n{F_2}{G_2}R( \mathbb{X} ){T^{2 - {\theta _3}}}}}{{1 - {\theta _3}}} 
 + \frac{{8n{F_2}{G_2}r( \mathbb{X} ){T^{2 - {\theta _3}}}}}{{1 - {\theta _3}}}\\
\nonumber
&\;\; + \frac{{8n\sqrt C R( \mathbb{X} ){s_0} T \log ( T )}}{{{\alpha _0}\gamma _0}} + \frac{{8n\sqrt C R( \mathbb{X} ){s_0}}T}{{{\alpha _0}\gamma _0}}
 + \frac{{8n{p^2}F_2^4G_2^2{\vartheta _4}\alpha _0^2\gamma _0^2{T^{2 - 2{\theta _1} + 2{\theta _2} 
+ 2{\theta _3}}}}}{{( {1 - 2{\theta _1} + 2{\theta _2} + 2{\theta _3}} )r{{( \mathbb{X} )}^2}}}\\
&\;\; + \frac{{8n{p^2}F_1^2G_2^2{\vartheta _4}\alpha _0^2{T^{2 - 2{\theta _1} 
+ 2{\theta _3}}}}}{{( {1 - 2{\theta _1} + 2{\theta _3}} )r{{( \mathbb{X} )}^2}}} + 4nCG_2^2{\vartheta _4}s_0^2 T\log ( T ) + 4nCG_2^2{\vartheta _4}s_0^2 T + 2G_2^2{\vartheta _3}T,
\label{Theorem1-proof-ii-eq19}
\end{flalign}
\begin{flalign}
\nonumber
&\;\;\;\;\;{( {\frac{1}{n}\sum\limits_{i = 1}^n {\sum\limits_{t = 1}^T {{\mathbf{E}}[ {\| {{{[ {{g_t}( {{x_{i,t}}} )} ]}_ + }} \|} ]} } } )^2} \\
\nonumber
& \le \frac{{8nR{{( \mathbb{X} )}^2}{T^{1 + {\theta _1} - {\theta _2}}}}}{{{\alpha _0}\gamma _0}} + \frac{{4n{p^2}F_1^2{\alpha _0}{T^{2 - {\theta _1} - {\theta _2} 
+ 2{\theta _3}}}}}{{( {1 - {\theta _1} - {\theta _2} + 2{\theta _3}} )r{{( \mathbb{X} )}^2}\gamma _0}} 
+ \frac{{8n{G_1}R( \mathbb{X} ){T^{2 - {\theta _2}}}}}{({1 - {\theta _2}})\gamma _0} + \frac{{4n{G_1}R( \mathbb{X} ){T^{2 - {\theta _2} - {\theta _3}}}}}{({1 - {\theta _2} - {\theta _3}})\gamma _0} \\
\nonumber
&\;\; + \frac{{8npLr( \mathbb{X} )R( \mathbb{X} ){T^{2 - {\theta _2} - {\theta _3}}}}}{({1 - {\theta _2} - {\theta _3}})\gamma _0} + \frac{{4n{F_2}{G_2}R( \mathbb{X} ){T^{2 - {\theta _3}}}}}{{1 - {\theta _3}}} 
 + \frac{{8n{F_2}{G_2}r( \mathbb{X} ){T^{2 - {\theta _3}}}}}{{1 - {\theta _3}}}  + 2G_2^2{\vartheta _3}T\\
\nonumber
&\;\; + \frac{{8n\sqrt C R( \mathbb{X} ){s_0}T\log ( T )}}{{{\alpha _0}\gamma _0}} + \frac{{8n\sqrt C R( \mathbb{X} ){s_0}}T}{{{\alpha _0}\gamma _0}} 
+ \frac{{8n{p^2}F_1^2G_2^2{\vartheta _4}\alpha _0^2 T\log ( T )}}{{r{{( \mathbb{X} )}^2}}} + \frac{{8n{p^2}F_1^2G_2^2{\vartheta _4}\alpha _0^2}T}{{r{{( \mathbb{X} )}^2}}} 
\\
&\;\; + \frac{{8n{p^2}F_2^4G_2^2{\vartheta _4}\alpha _0^2\gamma _0^2{T^{2 - 2{\theta _1} + 2{\theta _2} 
+ 2{\theta _3}}}}}{{( {1 - 2{\theta _1} + 2{\theta _2} + 2{\theta _3}} )r{{( \mathbb{X} )}^2}}} 
+ 4nCG_2^2{\vartheta _4}s_0^2 T\log ( T ) + 4nCG_2^2{\vartheta _4}s_0^2 T,
\label{Theorem1-proof-ii-eq20}
\end{flalign}
\begin{flalign}
\nonumber
&\;\;\;\;\;{( {\frac{1}{n}\sum\limits_{i = 1}^n {\sum\limits_{t = 1}^T {{\mathbf{E}}[ {\| {{{[ {{g_t}( {{x_{i,t}}} )} ]}_ + }} \|} ]} } } )^2} \\
\nonumber
& \le \frac{{8nR{{( \mathbb{X} )}^2}{T^{1 + {\theta _1} - {\theta _2}}}}}{{{\alpha _0}\gamma _0}} + \frac{{4n{p^2}F_1^2{\alpha _0}{T^{2 - {\theta _1} - {\theta _2} 
+ 2{\theta _3}}}}}{{( {1 - {\theta _1} - {\theta _2} + 2{\theta _3}} )r{{( \mathbb{X} )}^2}\gamma _0}} 
+ \frac{{8n{G_1}R( \mathbb{X} ){T^{2 - {\theta _2}}}}}{({1 - {\theta _2}})\gamma _0} + \frac{{4n{G_1}R( \mathbb{X} ){T^{2 - {\theta _2} - {\theta _3}}}}}{({1 - {\theta _2} - {\theta _3}})\gamma _0}\\
\nonumber
&\;\; + \frac{{8nLr( \mathbb{X} )R( \mathbb{X} ){T^{2 - {\theta _2} - {\theta _3}}}}}{({1 - {\theta _2} - {\theta _3}})\gamma _0} + \frac{{4n{F_2}{G_2}R( \mathbb{X} ){T^{2 - {\theta _3}}}}}{{1 - {\theta _3}}} 
 + \frac{{8n{F_2}{G_2}r( \mathbb{X} ){T^{2 - {\theta _3}}}}}{{1 - {\theta _3}}} + \frac{{8n{p^2}F_2^4G_2^2{\vartheta _4}\alpha _0^2}\gamma _0^2T}{{r{{( \mathbb{X} )}^2}}} \\
\nonumber
&\;\; + \frac{{8n\sqrt C R( \mathbb{X} ){s_0}T\log ( T )}}{{{\alpha _0}\gamma _0}} + \frac{{8n\sqrt C R( \mathbb{X} ){s_0}T}}{{{\alpha _0}\gamma _0}} 
+ \frac{{8n{p^2}F_1^2G_2^2{\vartheta _4}\alpha _0^2 T\log ( T )}}{{r{{( \mathbb{X} )}^2}}} + \frac{{8n{p^2}F_1^2G_2^2{\vartheta _4}\alpha _0^2}T}{{r{{( \mathbb{X} )}^2}}} 
 \\
&\;\; + \frac{{8n{p^2}F_2^4G_2^2{\vartheta _4}\alpha _0^2 \gamma _0^2T\log ( T )}}{{r{{( \mathbb{X} )}^2}}} 
+ 4nCG_2^2{\vartheta _4}s_0^2 T\log ( T ) + 4nCG_2^2{\vartheta _4}s_0^2 T  + 2G_2^2{\vartheta _3}T.
\label{Theorem1-proof-ii-eq21}
\end{flalign}

From \eqref{Theorem1-proof-ii-eq14}, \eqref{Theorem1-proof-ii-eq15}, \eqref{Theorem1-proof-ii-eq16} 
and \eqref{Theorem1-proof-ii-eq19}--\eqref{Theorem1-proof-ii-eq21}, we know that \eqref{theorem1-eq3} holds.

(iii)
We have
\begin{flalign}
\nonumber
2{\mathbf{E}}[{\Lambda _T}( {{x_s}} )] &= 2{\mathbf{E}}\Big[\sum\limits_{i = 1}^n {\sum\limits_{t = 1}^T {\frac{{v_{i,t + 1}^T{g_{i,t}}( {{x_s}} )}}{{{\gamma _t}}}} }\Big]
  = 2{\mathbf{E}}\Big[\sum\limits_{i = 1}^n {\sum\limits_{t = 1}^T {{{{\big( {{{[ {{g_{i,t}}( {{e_{i,t}}} )} ]}_ + }} \big)}^T}}{g_{i,t}}( {{x_s}} )} }\Big]  \\
\nonumber
& \le  - 2{\mathbf{E}}\Big[\sum\limits_{i = 1}^n {\sum\limits_{t = 1}^T {{\varsigma _s}{{{\big( {{{[ {{g_{i,t}}( {{e_{i,t}}} )} ]}_ + }} \big)}^T}}{\mathbf{1}_{{m_i}}}} }\Big]
 =  - 2{\varsigma _s}\sum\limits_{i = 1}^n {\sum\limits_{t = 1}^T {\mathbf{E}}[{{{\| {{{[ {{g_{i,t}}( {{e_{i,t}}} )} ]}_ + }} \|}_1}}] } \\
& \le  - 2{\varsigma _s}\sum\limits_{i = 1}^n {\sum\limits_{t = 1}^T {\mathbf{E}}[{\| {{{[ {{g_{i,t}}( {{e_{i,t}}} )} ]}_ + }} \|}] },
\label{Theorem1-proof-iii-eq1}
\end{flalign}
where the second equality holds due to \eqref{Algorithm1-eq4}, and the first inequality holds due to \eqref{ass8-eq1} and ${g_t}( x ) = {\rm{col}}( {{g_{1,t}}( x ), \cdot  \cdot  \cdot ,{g_{n,t}}( x )} )$.

Selecting $y = {x_s}$ in \eqref{Lemma8-eq2}, from \eqref{Theorem1-proof-iii-eq1}, we have
\begin{flalign}
2{\varsigma _s}\sum\limits_{i = 1}^n {\sum\limits_{t = 1}^T {\mathbf{E}}[{\| {{{[ {{g_{i,t}}( {{e_{i,t}}} )} ]}_ + }} \|}] }  \le {\Psi _T},
\label{Theorem1-proof-iii-eq2}
\end{flalign}
Combining \eqref{Theorem1-proof-iii-eq2} and \eqref{Theorem1-proof-ii-eq1}, and using \eqref{theorem1-eq1} with ${\theta _4} \in [ {1,1 + {\theta _1} - {\theta _2}} )$ 
and \eqref{Theorem1-proof-i-eq1}, we have
\begin{flalign}
\nonumber
&\;\;\;\;\;\sum\limits_{i = 1}^n {\sum\limits_{t = 1}^T {{\mathbf{E}}[ {\| {{{[ {{g_{i,t}}( {{e_{i,t}}} )} ]}_ + }} \|} ]} } \\
\nonumber
& \le \frac{{2nR{{( \mathbb{X} )}^2}{T^{{\theta _1} - {\theta _2}}}}}{{{\varsigma _s}{\alpha _0}\gamma _0}} 
+ \frac{{n{p^2}F_1^2{\alpha _0}{T^{1 - {\theta _1} - {\theta _2} + 2{\theta _3}}}}}{{( {1 - {\theta _1} - {\theta _2} + 2{\theta _3}} ){\varsigma _s}r{{( \mathbb{X} )}^2}\gamma _0}} 
+ \frac{{2n{G_1}R( \mathbb{X} ){T^{1 - {\theta _2}}}}}{{( {1 - {\theta _2}} ){\varsigma _s}\gamma _0}} 
+ \frac{{n{G_1}R( \mathbb{X} ){T^{1 - {\theta _2} - {\theta _3}}}}}{{( {1 - {\theta _2} - {\theta _3}} ){\varsigma _s}\gamma _0}} \\
\nonumber
&\;\; + \frac{{2nLr( \mathbb{X} )R( \mathbb{X} ){T^{1 - {\theta _2} - {\theta _3}}}}}{{( {1 - {\theta _2} - {\theta _3}} ){\varsigma _s}\gamma _0}} 
+ \frac{{2n{F_2}{G_2}r( \mathbb{X} ){T^{1 - {\theta _3}}}}}{{( {1 - {\theta _3}} ){\varsigma _s}}} 
+ \frac{{n{F_2}{G_2}R( \mathbb{X} ){T^{1 - {\theta _3}}}}}{{( {1 - {\theta _3}} ){\varsigma _s}}}\\
&\;\; 
+ \frac{{2n\sqrt C R( \mathbb{X} ){s_0}{T^{1 + {\theta _1} - {\theta _2} - {\theta _4}}}}}{{( {1 + {\theta _1} - {\theta _2} - {\theta _4}} ){\varsigma _s}{{\alpha _0}\gamma _0}}}.
\label{Theorem1-proof-iii-eq3}
\end{flalign}
Combining \eqref{Lemma9-eq3}, \eqref{Theorem1-proof-i-eq6}, \eqref{Theorem1-proof-iii-eq3}, and using \eqref{theorem1-eq1} with ${\theta _4} \in [ {1,1 + {\theta _1} - {\theta _2}} )$ and \eqref{Theorem1-proof-i-eq1}, we have
\begin{flalign}
\nonumber
&\;\;\;\;\;\mathbf{E}[{{\rm{Net} \mbox{-} \rm{CCV}}( T )}] \\
\nonumber
& \le \frac{{2nR{{( \mathbb{X} )}^2}{\vartheta _5}{T^{{\theta _1} - {\theta _2}}}}}{{{\varsigma _s}{\alpha _0}\gamma _0}}
 + \frac{{n{p^2}F_1^2{\alpha _0}{\vartheta _5}{T^{1 - {\theta _1} - {\theta _2} + 2{\theta _3}}}}}{{( {1 - {\theta _1} - {\theta _2} + 2{\theta _3}} ){\varsigma _s}r{{( \mathbb{X} )}^2}\gamma _0}} 
 + \frac{{2n{G_1}R( \mathbb{X} ){\vartheta _5}{T^{1 - {\theta _2}}}}}{{( {1 - {\theta _2}} ){\varsigma _s}\gamma _0}} 
+ \frac{{n{G_1}R( \mathbb{X} ){\vartheta _5}{T^{1 - {\theta _2} - {\theta _3}}}}}{{( {1 - {\theta _2} - {\theta _3}} ){\varsigma _s}\gamma _0}} \\
\nonumber
&\;\; + \frac{{2nLr( \mathbb{X} )R( \mathbb{X} ){\vartheta _5}{T^{1 - {\theta _2} - {\theta _3}}}}}{{( {1 - {\theta _2} - {\theta _3}} ){\varsigma _s}\gamma _0}} 
+ \frac{{n{F_2}{G_2}R( \mathbb{X} ){\vartheta _5}{T^{1 - {\theta _3}}}}}{{( {1 - {\theta _3}} ){\varsigma _s}}} 
+ \frac{{2n{F_2}{G_2}r( \mathbb{X} ){\vartheta _5}{T^{1 - {\theta _3}}}}}{{( {1 - {\theta _3}} ){\varsigma _s}}} \\
\nonumber
&\;\; + \frac{{np{F_1}{G_2}{\vartheta _2}{\alpha _0}{T^{1 - {\theta _1} + {\theta _3}}}}}{{( {1 - {\theta _1} + {\theta _3}} )r( \mathbb{X} )}} 
+ \frac{{2n\sqrt C R( \mathbb{X} ){s_0}{\vartheta _5}{T^{1 + {\theta _1} - {\theta _2} - {\theta _4}}}}}{{( {1 + {\theta _1} - {\theta _2} - {\theta _4}} ){\varsigma _s}{\alpha _0}\gamma _0}}
 + n{G_1}{\vartheta _1}
 + \frac{{n{G_2}r( \mathbb{X} ){T^{1 - {\theta _3}}}}}{{1 - {\theta _3}}}\\
&\;\; + n\sqrt C {G_2}{\vartheta _2}{s_0}\log ( T ) + n\sqrt C {G_2}{\vartheta _2}{s_0}.
\label{Theorem1-proof-iii-eq5}
\end{flalign}
Combining \eqref{Theorem1-proof-iii-eq2} and \eqref{Theorem1-proof-ii-eq17}, and using \eqref{theorem1-eq1} with ${\theta _4} > 1 + {\theta _1} - {\theta _2}$ 
and \eqref{Theorem1-proof-i-eq1}, we have
\begin{flalign}
\nonumber
&\;\;\;\;\;\sum\limits_{i = 1}^n {\sum\limits_{t = 1}^T {{\mathbf{E}}[ {\| {{{[ {{g_{i,t}}( {{e_{i,t}}} )} ]}_ + }} \|} ]} } \\
\nonumber
& \le \frac{{2nR{{( \mathbb{X} )}^2}{T^{{\theta _1} - {\theta _2}}}}}{{{\varsigma _s}{\alpha _0}\gamma _0}} 
+ \frac{{n{p^2}F_1^2{\alpha _0}{T^{1 - {\theta _1} - {\theta _2} + 2{\theta _3}}}}}{{( {1 - {\theta _1} - {\theta _2} + 2{\theta _3}} ){\varsigma _s}r{{( \mathbb{X} )}^2}\gamma _0}} 
+ \frac{{2n{G_1}R( \mathbb{X} ){T^{1 - {\theta _2}}}}}{{( {1 - {\theta _2}} ){\varsigma _s}\gamma _0}} 
+ \frac{{n{G_1}R( \mathbb{X} ){T^{1 - {\theta _2} - {\theta _3}}}}}{{( {1 - {\theta _2} - {\theta _3}} ){\varsigma _s}\gamma _0}} \\
\nonumber
&\;\; + \frac{{2nLr( \mathbb{X} )R( \mathbb{X} ){T^{1 - {\theta _2} - {\theta _3}}}}}{{( {1 - {\theta _2} - {\theta _3}} ){\varsigma _s}\gamma _0}} 
+ \frac{{n{F_2}{G_2}R( \mathbb{X} ){T^{1 - {\theta _3}}}}}{{( {1 - {\theta _3}} ){\varsigma _s}}} + \frac{{2n{F_2}{G_2}r( \mathbb{X} ){T^{1 - {\theta _3}}}}}{{( {1 - {\theta _3}} ){\varsigma _s}}}\\
&\;\; + \frac{{2n\sqrt C R( \mathbb{X} ){s_0}\log ( T )}}{{{\varsigma _s}{\alpha _0}\gamma _0}} + \frac{{2n\sqrt C R( \mathbb{X} ){s_0}}}{{{\varsigma _s}{\alpha _0}\gamma _0}}.
\label{Theorem1-proof-iii-eq6}
\end{flalign}
Combining \eqref{Lemma9-eq3}, \eqref{Theorem1-proof-i-eq6}, \eqref{Theorem1-proof-iii-eq6}, 
and using \eqref{theorem1-eq1} with ${\theta _4} > 1 + {\theta _1} - {\theta _2}$ and \eqref{Theorem1-proof-i-eq1}, we have
\begin{flalign}
\nonumber
&\;\;\;\;\;\mathbf{E}[{{\rm{Net} \mbox{-} \rm{CCV}}( T )}] \\
\nonumber
& \le \frac{{2nR{{( \mathbb{X} )}^2}{\vartheta _5}{T^{{\theta _1} - {\theta _2}}}}}{{{\varsigma _s}{\alpha _0}\gamma _0}}
 + \frac{{n{p^2}F_1^2{\alpha _0}{\vartheta _5}{T^{1 - {\theta _1} - {\theta _2} + 2{\theta _3}}}}}{{( {1 - {\theta _1} - {\theta _2} + 2{\theta _3}} ){\varsigma _s}r{{( \mathbb{X} )}^2}\gamma _0}} 
 + \frac{{2n{G_1}R( \mathbb{X} ){\vartheta _5}{T^{1 - {\theta _2}}}}}{{( {1 - {\theta _2}} ){\varsigma _s}\gamma _0}} 
+ \frac{{n{G_1}R( \mathbb{X} ){\vartheta _5}{T^{1 - {\theta _2} - {\theta _3}}}}}{{( {1 - {\theta _2} - {\theta _3}} ){\varsigma _s}\gamma _0}} \\
\nonumber
&\;\; + \frac{{2nLr( \mathbb{X} )R( \mathbb{X} ){\vartheta _5}{T^{1 - {\theta _2} - {\theta _3}}}}}{{( {1 - {\theta _2} - {\theta _3}} ){\varsigma _s}\gamma _0}} 
+ \frac{{n{F_2}{G_2}R( \mathbb{X} ){\vartheta _5}{T^{1 - {\theta _3}}}}}{{( {1 - {\theta _3}} ){\varsigma _s}}}
+ \frac{{2n{F_2}{G_2}r( \mathbb{X} ){\vartheta _5}{T^{1 - {\theta _3}}}}}{{( {1 - {\theta _3}} ){\varsigma _s}}} \\
\nonumber
&\;\; 
+ \frac{{2n\sqrt C R( \mathbb{X} ){\vartheta _5}{s_0}\log ( T )}}{{{\varsigma _s}{\alpha _0}\gamma _0}} 
+ \frac{{2n\sqrt C R( \mathbb{X} ){\vartheta _5}{s_0}}}{{{\varsigma _s}{\alpha _0}\gamma _0}}
 + n{G_1}{\vartheta _1} + \frac{{n{G_2}r( \mathbb{X} ){T^{1 - {\theta _3}}}}}{{1 - {\theta _3}}} 
 \\
&\;\; + \frac{{np{F_1}{G_2}{\vartheta _2}{\alpha _0}{T^{1 - {\theta _1} + {\theta _3}}}}}{{( {1 - {\theta _1} + {\theta _3}} )r( \mathbb{X} )}} 
+ n\sqrt C {G_2}{\vartheta _2}{s_0}\log ( T ) + n\sqrt C {G_2}{\vartheta _2}{s_0}.
\label{Theorem1-proof-iii-eq7}
\end{flalign}
From \eqref{Theorem1-proof-iii-eq5} and \eqref{Theorem1-proof-iii-eq7}, we know that \eqref{theorem1-eq4} holds.

\hspace{-3mm}\emph{C. Proof of Theorem~2}

We first analyze local regret at one iteration in the following lemma.
\begin{lemma}
Suppose Assumptions~1--2 and 4--5 hold. For all $i \in [ n ]$, let $\{ {{x_{i,t}}} \}$ be the sequences generated by Algorithm~2, $y$ be an arbitrary point in $\mathbb{X}$, 
and $\hat y = ( {1 - {\xi _t}} )y$, then
\begin{flalign}
\nonumber
&\;\;\;\;\;\frac{1}{n}\sum\limits_{i = 1}^n {{\mathbf{E}}[ {v_{i,t + 1}^T{g_{i,t}}( {{x_{i,t}}} )} ]} 
+ \frac{1}{n}\sum\limits_{i = 1}^n {{\mathbf{E}}[ {\langle {\nabla {f_{i,t}}( {{x_{i,t}}} ),{x_{i,t}} - y} \rangle } ]} \\
\nonumber
& \le \frac{1}{n}\sum\limits_{i = 1}^n {{\mathbf{E}}[ {v_{i,t + 1}^T{g_{i,t}}( y )} ]}  
+ \frac{1}{n}\sum\limits_{i = 1}^n {{{\mathbf{E}}[ {{\Delta _{i,t}}( {\hat y} )} ]}}  
+ \frac{1}{n}{{\mathbf{E}}[ {{{\tilde \Delta }_t}} ]} \\
&\;\; + \frac{1}{n}\sum\limits_{i = 1}^n {{G_2}\big( {R( \mathbb{X} ){\xi _t} + {\delta _t}} \big){\mathbf{E}}[ {\| {{v_{i,t + 1}}} \|} ]} 
+ {G_1}R( \mathbb{X} ){\xi _t} + 2LR( \mathbb{X} ){\delta _t} + 2\sqrt C R( \mathbb{X} )\frac{{{s_t}}}{{{\alpha _t}}}, \label{Lemma10-eq1}
\end{flalign}
where 
\begin{flalign}
\nonumber
{{ \tilde{\Delta} }_t} = \sum\limits_{i = 1}^n {\big( {p{G_1} + p{G_2}\| {{v_{i,t + 1}}} \|} \big)\| {x_{i,t}} - z_{i,t + 1} \|}  
- \sum\limits_{i = 1}^n {\frac{{{{\| {x_{i,t}} - z_{i,t + 1} \|}^2}}}{{2{\alpha _t}}}}.
\end{flalign}
\end{lemma}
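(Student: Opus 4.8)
The plan is to follow the proof of Lemma~7 essentially line by line, substituting the two-point estimators $\hat\nabla_2$ for the one-point estimators $\hat\nabla_1$ wherever they occur, replacing $a_{i,t+1}$ by $b_{i,t+1}$, and using \eqref{Algorithm2-eq4}--\eqref{Algorithm2-eq5} in place of \eqref{Algorithm1-eq5}--\eqref{Algorithm1-eq6}. First I would reuse \eqref{lemma7-proof-eq1} verbatim, since it involves only $\hat g_{i,t}$, \eqref{lemma5-eq5}, \eqref{ass4-eq2} and \eqref{ass1-eq1}, to bound $v_{i,t+1}^T\hat g_{i,t}(\hat y)$ by $v_{i,t+1}^T g_{i,t}(y)+G_2(R(\mathbb{X})\xi_t+\delta_t)\|v_{i,t+1}\|$. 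Then, exactly as in \eqref{lemma7-proof-eq2}, I would write $\langle\nabla\hat f_{i,t}(x_{i,t}),x_{i,t}-\hat y\rangle=\mathbf{E}_{\mathfrak{U}_t}[\langle\hat\nabla_2 f_{i,t}(x_{i,t}),x_{i,t}-\hat y\rangle]$ using \eqref{lemma5-eq1} (which holds for $\hat\nabla_2$ as well), split the inner product across $x_{i,t}-z_{i,t+1}$ and $z_{i,t+1}-\hat y$, and bound the first piece by $pG_1\|x_{i,t}-z_{i,t+1}\|$ via Cauchy--Schwarz and the constant bound \eqref{lemma5-eq3}; this is the place where $pG_1$ replaces $pF_1/\delta_t$ and produces the first contribution to $\tilde\Delta_t$.

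Next I would use \eqref{Algorithm2-eq4} to decompose $\langle\hat\nabla_2 f_{i,t}(x_{i,t}),z_{i,t+1}-\hat y\rangle$ into $\langle\hat\nabla_2 g_{i,t}(x_{i,t})v_{i,t+1},\hat y-x_{i,t}\rangle+\langle\hat\nabla_2 g_{i,t}(x_{i,t})v_{i,t+1},x_{i,t}-z_{i,t+1}\rangle+\langle b_{i,t+1},z_{i,t+1}-\hat y\rangle$, mirroring \eqref{lemma7-proof-eq3}. For the first term, since in Algorithm~2 the dual variable $v_{i,t+1}=\gamma_t[g_{i,t}(x_{i,t})]_+$ is independent of $\mathfrak{U}_t$ (as are $x_{i,t}$ and $\hat y$), I would take conditional expectation, pull out $v_{i,t+1}$, apply \eqref{lemma5-eq4}, the first-order convexity inequality \eqref{Lemma7-proof-eq4} for $\hat g_{i,t}$, and then \eqref{lemma5-eq5} together with \eqref{lemma7-proof-eq1} to reach $v_{i,t+1}^T g_{i,t}(y)-v_{i,t+1}^T g_{i,t}(x_{i,t})+G_2(R(\mathbb{X})\xi_t+\delta_t)\|v_{i,t+1}\|$, exactly as in \eqref{lemma7-proof-eq5}. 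The second term I would bound by $pG_2\|v_{i,t+1}\|\,\|x_{i,t}-z_{i,t+1}\|$ using \eqref{lemma5-eq7}, giving the second contribution to $\tilde\Delta_t$. The third term I would handle by applying \eqref{Lemma3-eq1} to the projection update \eqref{Algorithm2-eq5}, which after invoking \eqref{Algorithm1-eq2}, double stochasticity of $W_{t+1}$, and convexity of $\|\cdot\|^2$ yields $\Delta_{i,t}(\hat y)-\|x_{i,t}-z_{i,t+1}\|^2/(2\alpha_t)$ plus $\tfrac1{2\alpha_t}\big(\sum_j[W_{t+1}]_{ij}\|\hat y-\hat z_{j,t+1}\|^2-\|\hat y-z_{i,t+1}\|^2\big)$, precisely as in \eqref{Lemma7-proof-eq7}.

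The remaining steps are identical to Lemma~7: I would bound $\mathbf{E}[\|\hat y-\hat z_{i,t+1}\|^2]-\mathbf{E}[\|\hat y-z_{i,t+1}\|^2]\le 4R(\mathbb{X})\mathbf{E}[\|z_{i,t+1}-\hat z_{i,t+1}\|]$ by Cauchy--Schwarz and \eqref{ass1-eq1}, then $\mathbf{E}[\|z_{i,t+1}-\hat z_{i,t+1}\|]\le\sqrt C s_{t+1}$ from \eqref{Lemma7-proof-eq9}; I would pass from $\langle\nabla f_{i,t}(x_{i,t}),x_{i,t}-\hat y\rangle$ to $\langle\nabla\hat f_{i,t}(x_{i,t}),x_{i,t}-\hat y\rangle$ at the cost of $pLR(\mathbb{X})\delta_t$ using \eqref{lemma5-eq8} and \eqref{ass1-eq1}, as in \eqref{lemma7-proof-eq11}; and from $\langle\nabla f_{i,t}(x_{i,t}),x_{i,t}-y\rangle$ to $\langle\nabla f_{i,t}(x_{i,t}),x_{i,t}-\hat y\rangle$ at the cost of $G_1R(\mathbb{X})\xi_t$ using \eqref{ass4-eq1a}, as in \eqref{lemma7-proof-eq12}. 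Summing the resulting inequalities over $i\in[n]$, dividing by $n$, taking total expectation, using $\sum_i[W_t]_{ij}=1$ and that $\{s_t\}$ is nonincreasing, and collecting the $x_{i,t}-z_{i,t+1}$ terms into $\tilde\Delta_t$ yields \eqref{Lemma10-eq1}. I expect no genuine obstacle: the only substantive change relative to Lemma~7 is that the uniform bounds $pF_1/\delta_t$ and $pF_2/\delta_t$ on the one-point estimators are replaced by the constant bounds $pG_1$ and $pG_2$ of \eqref{lemma5-eq3} and \eqref{lemma5-eq7}, which is exactly what distinguishes $\tilde\Delta_t$ from $\bar\Delta_t$; a minor simplification is that in Algorithm~2 the dual variable is evaluated at $x_{i,t}$ rather than at the perturbed point, so it is independent of $\mathfrak{U}_t$ and the conditioning step requires no extra care.
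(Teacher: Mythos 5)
Your proposal is correct and follows essentially the same route as the paper, whose entire proof of this lemma is the one-line remark that one repeats the argument for \eqref{Lemma7-eq1} with the one-point bounds \eqref{lemma5-eq2} and \eqref{lemma5-eq6} replaced by the two-point bounds \eqref{lemma5-eq3} and \eqref{lemma5-eq7} --- precisely the substitution you carry out step by step. Your side observation that the conditioning step is in fact cleaner here, because in Algorithm~2 the dual variable $v_{i,t+1}=\gamma_t[g_{i,t}(x_{i,t})]_+$ depends only on $\mathcal{U}_{t-1}$ and is therefore independent of $\mathfrak{U}_t$, is also accurate.
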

\begin{proof}
Similar to the way to get \eqref{Lemma7-eq1}, replacing \eqref{lemma5-eq2} and \eqref{lemma5-eq6} with \eqref{lemma5-eq3} and \eqref{lemma5-eq7}, we know that \eqref{Lemma10-eq1} holds. 
\end{proof}

We then analyze local regret and squared cumulative constraint violation over $T$ iterations in the
following lemma.
\begin{lemma}
Suppose Assumptions~1--2 and 4--5 hold. For all $i \in [ n ]$, let $\{ {{x_{i,t}}} \}$ be the sequences generated by Algorithm~2 
with ${\gamma _t} = {\gamma _0}/{\alpha _t}$ and the constant ${\gamma _0} \in ( {0,1/ {4(p^2+1)G_2^2} } ]$, $y$ be an arbitrary point in $\mathbb{X}$, and $\hat y = ( {1 - {\xi _t}} )y$, 
Then, for any $T \in {\mathbb{N}_ + }$,
\begin{subequations}
\begin{flalign}
\nonumber
&\;\;\;\;\;\frac{1}{n}\sum\limits_{i = 1}^n {\sum\limits_{t = 1}^T {{\mathbf{E}}[ {\langle {\nabla {f_{i,t}}( {{x_{i,t}}} ),{x_{i,t}} - y} \rangle } ]} } \\
& \le  - \frac{1}{n}\sum\limits_{i = 1}^n {\sum\limits_{t = 1}^T {\frac{{\mathbf{E}[ {{{\| {{x_{i,t}} - {z_{i,t + 1}}} \|}^2}} ]}}{{4{\alpha _t}}}} } 
+ \frac{1}{n}\sum\limits_{i = 1}^n {\sum\limits_{t = 1}^T {{{\mathbf{E}}[ {{\Delta _{i,t}}( {\hat y} )} ]} } }  
+ {\bar{\Phi} _T}, \forall y \in {\mathcal{X}_T}, \label{Lemma11-eq1} \\
&\;\;\;\;\;\sum\limits_{i = 1}^n {\sum\limits_{t = 1}^T {\frac{1}{2}} } {\mathbf{E}}\Big[ {{{\| {{{[ {{g_{i,t}}( {{x_{i,t}}} )} ]}_ + }} \|}^2} 
+ \frac{{{{{\| {{x_{i,t}} - {z_{i,t + 1}}} \|}^2}} }}{{2{\gamma _0}}}} \Big] 
\le {\mathbf{E}}[ {{\Lambda _T}( y )} ] + {{\bar \Psi }_T}. \label{Lemma11-eq2}
\end{flalign}
\end{subequations}
where
\begin{flalign}
\nonumber
{\bar{\Phi} _T} &= 2{p^2}G_1^2\sum\limits_{t = 1}^T {{\alpha _t}}  + \frac{{R{{( \mathbb{X} )}^2}}}{4}\sum\limits_{t = 1}^T {\frac{{\xi _t^2}}{{{\alpha _t}}}}  
+ \frac{1}{4}\sum\limits_{t = 1}^T {\frac{{\delta _t^2}}{{{\alpha _t}}}}  + {G_1}R( \mathbb{X} )\sum\limits_{t = 1}^T {{\xi _t}} + 2LR( \mathbb{X} )\sum\limits_{t = 1}^T {{\delta _t}} \\
\nonumber
&\;\; + 2\sqrt C R( \mathbb{X} )\sum\limits_{t = 1}^T {\frac{{{s_t}}}{{{\alpha _t}}}}, \\
\nonumber
{{\bar \Psi }_T} &= \frac{{2nR{{( \mathbb{X} )}^2}}}{{{\gamma _0}}}  
+ 2n{G_1}R( \mathbb{X} )\sum\limits_{t = 1}^T {\frac{1}{{{\gamma _t}}}}  + 2np^2G_1^2{\gamma _0}\sum\limits_{t = 1}^T {\frac{1}{{\gamma _t^2}}} 
 + \frac{{nR{{( \mathbb{X} )}^2}}}{{4{\gamma _0}}}\sum\limits_{t = 1}^T {\xi _t^2}  + \frac{n}{{4{\gamma _0}}}\sum\limits_{t = 1}^T {\delta _t^2} \\
\nonumber
&\;\; + n{G_1}R( \mathbb{X} )\sum\limits_{t = 1}^T {\frac{{{\xi _t}}}{{{\gamma _t}}}}  
+ 2nLR( \mathbb{X} )\sum\limits_{t = 1}^T {\frac{{{\delta _t}}}{{{\gamma _t}}}}  
+ \frac{{2n\sqrt C R( \mathbb{X} )}}{{{\gamma _0}}}\sum\limits_{t = 1}^T {{s_t}}.
\end{flalign}
\end{lemma}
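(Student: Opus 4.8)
The plan is to obtain both inequalities from the one-iteration bound \eqref{Lemma10-eq1} of Lemma~10, following the template of Lemma~8 but exploiting two features specific to Algorithm~2. First, the dual variable is formed at the actual decision, $v_{i,t+1}=\gamma_t[g_{i,t}(x_{i,t})]_+$, so that $v_{i,t+1}^Tg_{i,t}(x_{i,t})=\gamma_t\|[g_{i,t}(x_{i,t})]_+\|^2$ is an exact identity (via $\varphi^T[\varphi]_+=\|[\varphi]_+\|^2$), which removes the Lipschitz perturbation error terms present in Lemma~8. Second, the parameters are coupled so that $\alpha_t\gamma_t=\gamma_0$ is constant, which makes the telescoping of the $\Delta_{i,t}(\hat y)$ terms clean.

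For \eqref{Lemma11-eq1}, I would sum \eqref{Lemma10-eq1} over $t\in[T]$, discard the nonpositive term $\frac1n\sum_i\mathbf{E}[v_{i,t+1}^Tg_{i,t}(y)]$ (since $g_{i,t}(y)\le\mathbf{0}_{m_i}$ for $y\in\mathcal{X}_T$ and $v_{i,t+1}\ge\mathbf{0}_{m_i}$), and then bound $\tilde\Delta_t$ together with $\frac1n\sum_iG_2(R(\mathbb{X})\xi_t+\delta_t)\mathbf{E}[\|v_{i,t+1}\|]$ by Young's inequality, using $\|\hat\nabla_2f_{i,t}\|\le pG_1$ and $\|\hat\nabla_2g_{i,t}\|\le pG_2$. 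I would calibrate the Young constants so that the quadratic term $\frac{1}{2\alpha_t}\|x_{i,t}-z_{i,t+1}\|^2$ subtracted inside $\tilde\Delta_t$ absorbs the cross-term residuals and leaves exactly $-\frac{1}{4\alpha_t}\|x_{i,t}-z_{i,t+1}\|^2$ — the first term on the right of \eqref{Lemma11-eq1} — while the remaining $\xi_t^2,\delta_t^2,\alpha_t$ residuals reassemble into $\bar\Phi_T$. All $\|v_{i,t+1}\|^2$ contributions, rewritten via $\|v_{i,t+1}\|^2=\gamma_t^2\|[g_{i,t}(x_{i,t})]_+\|^2$ and $\alpha_t\gamma_t=\gamma_0$, combine with $-v_{i,t+1}^Tg_{i,t}(x_{i,t})$ into $\big(2(p^2+1)G_2^2\gamma_0-1\big)\gamma_t\|[g_{i,t}(x_{i,t})]_+\|^2\le0$, which is discarded since $\gamma_0\le\frac{1}{4(p^2+1)G_2^2}$; the $\Delta_{i,t}(\hat y)$ term passes through unchanged.

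For \eqref{Lemma11-eq2}, I would instead multiply \eqref{Lemma10-eq1} by $n/\gamma_t$ and sum over $t$, moving the loss term to the right via $\langle\nabla f_{i,t}(x_{i,t}),x_{i,t}-y\rangle\ge-2G_1R(\mathbb{X})$ (Cauchy--Schwarz and Assumptions~1 and~3) to produce the $2nG_1R(\mathbb{X})\sum_t\gamma_t^{-1}$ term; the $v_{i,t+1}^Tg_{i,t}(y)/\gamma_t$ sum gives $\mathbf{E}[\Lambda_T(y)]$ by definition, and since $\alpha_t\gamma_t=\gamma_0$ the $\Delta_{i,t}(\hat y)/\gamma_t$ sum telescopes to $\frac{1}{2\gamma_0}\|\hat y-x_{i,1}\|^2\le\frac{2R(\mathbb{X})^2}{\gamma_0}$. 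The difference from \eqref{Lemma11-eq1} is that I would split the subtracted $\frac{1}{2\gamma_0}\|x_{i,t}-z_{i,t+1}\|^2$ into two halves, using one half to absorb the Young residual from $(pG_1+pG_2\|v_{i,t+1}\|)\|x_{i,t}-z_{i,t+1}\|/\gamma_t$ and retaining $\frac{1}{4\gamma_0}\|x_{i,t}-z_{i,t+1}\|^2$, and split the $G_2(R(\mathbb{X})\xi_t+\delta_t)\|v_{i,t+1}\|/\gamma_t=G_2(R(\mathbb{X})\xi_t+\delta_t)\|[g_{i,t}(x_{i,t})]_+\|$ terms with weight $\frac{1}{4(p^2+1)}$ on $\|[g_{i,t}(x_{i,t})]_+\|^2$, so that the $\xi_t^2,\delta_t^2$ residuals fit under the $\frac{1}{4\gamma_0}$ coefficients of $\bar\Psi_T$ (using $(p^2+1)G_2^2\le\frac{1}{4\gamma_0}$). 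Using the identity $v_{i,t+1}^Tg_{i,t}(x_{i,t})/\gamma_t=\|[g_{i,t}(x_{i,t})]_+\|^2$ on the left, the total coefficient of $\|[g_{i,t}(x_{i,t})]_+\|^2$ that must be moved left is $2\gamma_0p^2G_2^2+\frac{1}{2(p^2+1)}\le\frac12$, leaving at least $\frac12\|[g_{i,t}(x_{i,t})]_+\|^2$; together with the retained $\frac{1}{4\gamma_0}\|x_{i,t}-z_{i,t+1}\|^2$ this is the left-hand side of \eqref{Lemma11-eq2}, and the rest reassembles into $\bar\Psi_T$. The main obstacle is precisely this constant calibration: the hypothesis $\gamma_0\le\frac{1}{4(p^2+1)G_2^2}$ is used at its margin, since the coefficient of $\|[g_{i,t}(x_{i,t})]_+\|^2$ accumulated from the $\tilde\Delta_t$ split and from the two $G_2(\cdot)\|v_{i,t+1}\|$ terms must stay at or below $\frac12$; everything else is bookkeeping of constants irrelevant to the $\mathcal{O}(\cdot)$ bounds.
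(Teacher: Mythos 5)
Your proposal is correct and follows essentially the same route as the paper's proof: sum (or divide by $\gamma_t$ and sum) the one-iteration bound of Lemma~10, exploit the exact identity $v_{i,t+1}^Tg_{i,t}(x_{i,t})=\gamma_t\|[g_{i,t}(x_{i,t})]_+\|^2$ and the constancy of $\alpha_t\gamma_t=\gamma_0$, and calibrate Young's inequality so that half of the subtracted quadratic $\frac{1}{2\alpha_t}\|x_{i,t}-z_{i,t+1}\|^2$ (resp.\ $\frac{1}{2\gamma_0}\|x_{i,t}-z_{i,t+1}\|^2$) absorbs the cross terms while the accumulated coefficient of $\|[g_{i,t}(x_{i,t})]_+\|^2$ stays at or below what $\gamma_0\le 1/(4(p^2+1)G_2^2)$ permits. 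Your Young weights for the $G_2(R(\mathbb{X})\xi_t+\delta_t)\|v_{i,t+1}\|/\gamma_t$ term differ cosmetically from the paper's ($\tfrac{1}{4(p^2+1)}$ on $\|[g_{i,t}(x_{i,t})]_+\|^2$ versus $2G_2^2\gamma_0\|v_{i,t+1}\|^2/\gamma_t^2$), but both lead to the same retained coefficients of $\tfrac12$ and $\tfrac{1}{4\gamma_0}$ on the left-hand side of the second inequality.
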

\begin{proof}
(i)
Since ${g_{i,t}}( y ) \le {\mathbf{0}_{{m_i}}}$, $\forall i \in [ n ]$, $\forall t \in {\mathbb{N}_ + }$ when $\forall y \in {\mathcal{X}_T}$, summing \eqref{Lemma10-eq1} over $t \in [ T ]$ gives
\begin{flalign}
\nonumber
&\;\;\;\;\;\frac{1}{n}\sum\limits_{i = 1}^n {\sum\limits_{t = 1}^T {\mathbf{E}}[{\langle {\nabla {f_{i,t}}( {{x_{i,t}}} ),{x_{i,t}} - y} \rangle }] } \\
\nonumber
& \le \frac{1}{n}\sum\limits_{i = 1}^n {\sum\limits_{t = 1}^T {{\mathbf{E}}\Big[ { - v_{i,t + 1}^T{g_{i,t}}( {{x_{i,t}}} ) 
+ \frac{1}{n}{{{\tilde \Delta }_t}} + {G_2}\big( {R( \mathbb{X} ){\xi _t} + {\delta _t}} \big)\| {{v_{i,t + 1}}} \|} \Big]} } \\
& \;+ \frac{1}{n}\sum\limits_{i = 1}^n {\sum\limits_{t = 1}^T {{{\mathbf{E}}[ {{\Delta _{i,t}}( {\hat y} )} ]}} }  
+ {G_1}R( \mathbb{X} )\sum\limits_{t = 1}^T {{\xi _t}} 
 + 2LR( \mathbb{X} )\sum\limits_{t = 1}^T {{\delta _t}}  + 2\sqrt C R( \mathbb{X} )\sum\limits_{t = 1}^T {\frac{{{s_t}}}{{{\alpha _t}}}}. \label{Lemma11-proof-eq1}
\end{flalign}
We have
\begin{flalign}
\nonumber
&\;\;\;\;\; \sum\limits_{i = 1}^n {\sum\limits_{t = 1}^T {( {p{G_1} + p{G_2}\| {{v_{i,t + 1}}} \|} ){\mathbf{E}}[\| {x_{i,t}} - z_{i,t + 1} \|]} } \\
& \le \sum\limits_{i = 1}^n {\sum\limits_{t = 1}^T {({2{{p^2{G_1^2} }}{\alpha _t}} 
+ {2p^2G_2^2{\alpha _t}{{\| {{v_{i,t + 1}}} \|}^2}} + \frac{{ {\mathbf{E}}[{{\| {x_{i,t}} - z_{i,t + 1} \|}^2}]}}{{4{\alpha _t}}})} }. \label{Lemma11-proof-eq2}
\end{flalign}
We have
\begin{flalign}
\sum\limits_{i = 1}^n {\sum\limits_{t = 1}^T {{G_2}\big( {R( \mathbb{X} ){\xi _t} + {\delta _t}} \big)\| {{v_{i,t + 1}}} \|} }  
\le \sum\limits_{i = 1}^n {\sum\limits_{t = 1}^T {\big( {{2G_2^2{\alpha _t}{{\| {{v_{i,t + 1}}} \|}^2}} 
+ \frac{{ R{{( \mathbb{X} )}^2}\xi _t^2}}{{4{\alpha _t}}} + \frac{{ \delta _t^2}}{{4{\alpha _t}}}} \big)} }. \label{Lemma11-proof-eq3}
\end{flalign}
From \eqref{Algorithm2-eq3}, for all $\forall t \in {\mathbb{N}_ + }$, we have
\begin{flalign}
\| {{v_{i,t + 1}}} \| = {\gamma _t}\| {{{[ {{g_{i,t}}( {{x_{i,t}}} )} ]}_ + }} \|. \label{Lemma11-proof-eq4}
\end{flalign}
From \eqref{Lemma11-proof-eq4}, \eqref{Algorithm2-eq3} and the fact that ${\varphi ^T}{[ \varphi  ]_ + } = {\| {{{[ \varphi  ]}_ + }} \|^2}$ for any vector $\varphi $, we have
\begin{flalign}
\nonumber
2( {{p^2} + 1} )G_2^2{\alpha _t}{\| {{v_{i,t + 1}}} \|^2} - v_{i,t + 1}^T{g_{i,t}}( {{x_{i,t}}} ) 
&= 2( {{p^2} + 1} )G_2^2{\alpha _t}\gamma _2^2{\| {{{[ {{g_{i,t}}( {{x_{i,t}}} )} ]}_ + }} \|^2} - {\gamma _t}{\| {{{[ {{g_{i,t}}( {{x_{i,t}}} )} ]}_ + }} \|^2}\\
& = \big( {2( {{p^2} + 1} )G_2^2{\gamma _0} - 1} \big){\gamma _t}{\| {{{[ {{g_{i,t}}( {{x_{i,t}}} )} ]}_ + }} \|^2}  <  0, \label{Lemma11-proof-eq5}
\end{flalign}
where the last equality holds due to ${\gamma _t} = {\gamma _0}/{\alpha _t}$, and the inequality holds due to ${\gamma _0} < 1 /{2(p^2+1)G_2^2}$.

From \eqref{Lemma11-proof-eq1}--\eqref{Lemma11-proof-eq3} and \eqref{Lemma11-proof-eq5}, we know that \eqref{Lemma11-eq1} holds.

(ii)
Dividing \eqref{Lemma10-eq1} by ${\gamma _t}$, using \eqref{Lemma8-proof-ii-eq1}, and summing over $t \in [ T ]$ yields
\begin{flalign}
\nonumber
&\;\;\;\;\;\sum\limits_{i = 1}^n {\sum\limits_{t = 1}^T {\frac{{{\mathbf{E}}[ {v_{i,t + 1}^T{g_{i,t}}( {{x_{i,t}}} )} ]}}{{{\gamma _t}}}} } \\
\nonumber
& \le {\mathbf{E}}[ {{\Lambda _T}( y )} ] + {2n{G_1}R( \mathbb{X} )}\sum\limits_{t = 1}^T {\frac{1}{{{\gamma _t}}}}  
 + \sum\limits_{i = 1}^n {\sum\limits_{t = 1}^T {\frac{{{{\mathbf{E}}[ {{\Delta _{i,t}}( {\hat y} )} ]} }}{{{\gamma _t}}}} }  
 + \sum\limits_{t = 1}^T {\frac{{{{\mathbf{E}}[ {{{\tilde \Delta }_t}} ]}}}{{{\gamma _t}}}} + \frac{{2n\sqrt C R( \mathbb{X} )}}{{{\gamma _0}}}\sum\limits_{t = 1}^T {{s_t}} \\
&\;\; + \sum\limits_{i = 1}^n {\sum\limits_{t = 1}^T {\frac{{{G_2}\big( {R( \mathbb{X} ){\xi _t} + {\delta _t}} \big){\mathbf{E}}[ {\| {{v_{i,t + 1}}} \|} ]}}{{{\gamma _t}}}} } 
      + n{G_1}R( \mathbb{X} )\sum\limits_{t = 1}^T {\frac{{{\xi _t}}}{{{\gamma _t}}}}  + 2nLR( \mathbb{X} )\sum\limits_{t = 1}^T {\frac{{{\delta _t}}}{{{\gamma _t}}}}. \label{Lemma11-proof-ii-eq1}
\end{flalign}
We have
\begin{flalign}
\nonumber
&\;\;\;\;\; \sum\limits_{i = 1}^n {\sum\limits_{t = 1}^T {\frac{{( {p{G_1} + p{G_2}\| {{v_{i,t + 1}}} \|} ){\mathbf{E}}[\| {x_{i,t}} - z_{i,t + 1} \|]}}{{{\gamma _t}}}} } \\
&\;\; \le \sum\limits_{i = 1}^n {\sum\limits_{t = 1}^T {\big( {\frac{{2{ p^2{G_1^2} }{\gamma _0}}}{{\gamma _t^2}} + \frac{{2p^2G_2^2{\gamma _0}{{\| {{v_{i,t + 1}}} \|}^2}}}{{\gamma _t^2}} 
+ \frac{{ {\mathbf{E}}[{{\| {x_{i,t}} - z_{i,t + 1} \|}^2}]}}{{4{\gamma _0}}}} \big)} }. \label{Lemma11-ii-proof-eq2}
\end{flalign}
We have
\begin{flalign}
\sum\limits_{i = 1}^n {\sum\limits_{t = 1}^T {\frac{{{G_2}\big( {R( \mathbb{X} ){\xi _t} + {\delta _t}} \big)\| {{v_{i,t + 1}}} \|}}{{{\gamma _t}}}} } 
\le \sum\limits_{i = 1}^n {\sum\limits_{t = 1}^T {\big( {\frac{{2G_2^2{\gamma _0}{{\| {{v_{i,t + 1}}} \|}^2}}}{{\gamma _t^2}} 
+ \frac{{R{{( \mathbb{X} )}^2}\xi _t^2}}{{4{\gamma _0}}} + \frac{{\delta _t^2}}{{4{\gamma _0}}}} \big)} }. \label{Lemma11-ii-proof-eq3}
\end{flalign}
From ${\gamma _t} = {\gamma _0}/{\alpha _t}$ and \eqref{ass1-eq1}, we have
\begin{flalign}
\sum\limits_{t = 1}^T {\frac{{{\Delta _{i,t}}( {\hat y} )}}{{{\gamma _t}}} = } 
\sum\limits_{t = 1}^T {\frac{1}{{2{\gamma _0}}}} \big( {{{\| {\hat y - {x_{i,t}}} \|}^2} - {{\| {\hat y - {x_{i,t + 1}}} \|}^2}} \big) 
\le \frac{{{{\| {\hat y - {x_{i,1}}} \|}^2}}}{{2{\gamma _0}}} \le \frac{{2R{{( \mathbb{X} )}^2}}}{{{\gamma _0}}}. \label{Lemma11-ii-proof-eq4}
\end{flalign}
From \eqref{Lemma11-proof-eq4}, \eqref{Algorithm2-eq3} and the fact that ${\varphi ^T}{[ \varphi  ]_ + } = {\| {{{[ \varphi  ]}_ + }} \|^2}$ for any vector $\varphi $, we have
\begin{flalign}
\nonumber
\frac{{2( {{p^2} + 1} )G_2^2{\gamma _0}{{\| {{v_{i,t + 1}}} \|}^2}}}{{\gamma _t^2}} - \frac{{v_{i,t + 1}^T{g_{i,t}}( {{x_{i,t}}} )}}{{2{\gamma _t}}} 
&= 2( {{p^2} + 1} )G_2^2{\gamma _0}{\| {{{[ {{g_{i,t}}( {{x_{i,t}}} )} ]}_ + }} \|^2} - \frac{{{{\| {{{[ {{g_{i,t}}( {{x_{i,t}}} )} ]}_ + }} \|}^2}}}{2} \\
&\le \Big( {2( {{p^2} + 1} )G_2^2{\gamma _0} - \frac{1}{2}} \Big)\| {{{[ {{g_{i,t}}( {{x_{i,t}}} )} ]}_ + }} \|. \label{Lemma11-ii-proof-eq5}
\end{flalign}

From \eqref{Lemma11-proof-ii-eq1}--\eqref{Lemma11-ii-proof-eq5} and ${\gamma _0} \le 1/4( {{p^2} + 1} )G_2^2$,  
and using \eqref{Algorithm2-eq3} and the fact that ${\varphi ^T}{[ \varphi  ]_ + } = {\| {{{[ \varphi  ]}_ + }} \|^2}$ for any vector $\varphi $, we know that \eqref{Lemma11-eq2} holds.
\end{proof}

We next analyze network regret and cumulative constraint violation over $T$ iterations in the following lemma.
\begin{lemma}
Under the same condition as stated in Lemma~9, and supposing that Assumption~3 holds, for any $T \in {\mathbb{N}_ + }$, it holds that
\begin{subequations}
\begin{flalign}
&\frac{1}{n}\sum\limits_{i = 1}^n {\sum\limits_{t = 1}^T {{\mathbf{E}}[ {\langle {\nabla {f_t}( {{x_{i,t}}} ),{x_{i,t}} - y} \rangle } ]} }
\le \frac{1}{n}\sum\limits_{i = 1}^n {\sum\limits_{t = 1}^T {{{\mathbf{E}}[ {{\Delta _{i,t}}( {\hat y} )} ]}} }  + {{\hat \Phi }_T}, \forall y \in {\mathcal{X}_T},\label{Lemma12-eq1} \\
&\frac{1}{n}\sum\limits_{i = 1}^n {\sum\limits_{t = 1}^T {{\mathbf{E}}[ {\| {{{[ {{g_t}( {{x_{i,t}}} )} ]}_ + }} \|} ]} }  
\le \sqrt {T{{\hat \Psi }_T}}, \label{Lemma12-eq2} \\
&\frac{1}{n}\sum\limits_{i = 1}^n {\sum\limits_{t = 1}^T {{\mathbf{E}}[ {\| {{{[ {{g_t}( {{x_{i,t}}} )} ]}_ + }} \|} ]} }  
\le {\vartheta _7}\sum\limits_{i = 1}^n {\sum\limits_{t = 1}^T {{\mathbf{E}}\big[ {\| {{{[ {{g_{i,t}}( {{x_{i,t}}} )} ]}_ + }} \|} \big]} } + {\hat{\Xi} _T}, \label{Lemma12-eq3}
\end{flalign}
\end{subequations}
where
\begin{flalign}
\nonumber
&{\hat \Phi _T} = {\bar \Phi _T} + \big( {{G_1} + 2LR( \mathbb{X} )} \big){\vartheta _1} + \big( {{G_1} + 2LR( \mathbb{X} )} \big){\vartheta _2}\sum\limits_{t = 1}^T {{s_t}} 
 + {\big( {{G_1} + 2LR( \mathbb{X} )} \big)^2}\vartheta _2^2\sum\limits_{t = 1}^T {{\alpha _t}}, \\
\nonumber
&{\vartheta _6} = \frac{{4\max \{ {1,2G_2^2{\vartheta _4}} \}}}{{\min \{ {1,\frac{1}{{2{\gamma _0}}}} \}}}, 
{{\hat \Psi }_T} = {\vartheta _6}{{\bar \Psi }_T} + 2G_2^2{\vartheta _3} + 4nCG_2^2{\vartheta _4}\sum\limits_{t = 1}^T {s_t^2},\\
\nonumber
&{\vartheta _7} = 1 + pG_2^2{\vartheta _2}{\gamma _0}, 
{{\hat \Xi }_T} = n{G_1}{\vartheta _1} + np{G_1}{G_2}{\vartheta _2}\sum\limits_{t = 1}^T {{\alpha _t} + n\sqrt C } {G_2}{\vartheta _2}\sum\limits_{t = 1}^T {{s_t}}.
\end{flalign}
\end{lemma}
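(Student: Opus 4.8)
The plan is to prove \eqref{Lemma12-eq1}--\eqref{Lemma12-eq3} in the same three-part fashion as Lemma~7, replacing the one-point ingredients (Lemmas~5--8) by their two-point counterparts (\eqref{lemma5-eq3}, \eqref{lemma5-eq7}, and Lemmas~10--11). For part~(i), I would start from the decomposition \eqref{Lemma9-proof-i-eq1}--\eqref{Lemma9-proof-i-eq3}, which carries over verbatim since it uses only $f_t(x)=\frac1n\sum_j f_{j,t}(x)$ and the smoothness bound \eqref{ass5-eq1}; this reduces the network regret to the individual regret $\frac1n\sum_i\sum_t\mathbf{E}[\langle\nabla f_{i,t}(x_{i,t}),x_{i,t}-y\rangle]$ plus $\frac1n\sum_i\sum_j\sum_t 2LR(\mathbb{X})\mathbf{E}[\|x_{i,t}-x_{j,t}\|]$. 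The accumulated consensus term is then handled exactly as in \eqref{Lemma9-proof-i-eq6}--\eqref{Lemma9-proof-i-eq7}: apply \eqref{Lemma6-eq1}, split $\varepsilon_{i,t}^z={\hat z}_{i,t+1}-x_{i,t}$ into $\|z_{i,t+1}-{\hat z}_{i,t+1}\|+\|x_{i,t}-z_{i,t+1}\|$, bound the first piece by $\sqrt C s_t$ via \eqref{Lemma7-proof-eq10}, and absorb the second piece into $\frac{\|x_{i,t}-z_{i,t+1}\|^2}{4n\alpha_t}$ by Young's inequality. Combining with \eqref{Lemma11-eq1} (the two-point analog of \eqref{Lemma8-eq1}), whose negative term $-\frac{1}{n}\sum_i\sum_t\frac{\mathbf{E}[\|x_{i,t}-z_{i,t+1}\|^2]}{4\alpha_t}$ exactly cancels the leftover, yields \eqref{Lemma12-eq1} with ${\hat\Phi}_T={\bar\Phi}_T+2nLR(\mathbb{X})\vartheta_1+2n\sqrt C LR(\mathbb{X})\vartheta_2\sum_t s_t+4n^2L^2R(\mathbb{X})^2\vartheta_2^2\sum_t\alpha_t$.

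For part~(ii), I would reuse the pointwise inequality \eqref{Lemma9-proof-ii-eq1} and the aggregation identity \eqref{Lemma9-proof-ii-eq2} to obtain, as in \eqref{Lemma9-proof-ii-eq3}, that $\frac1n\sum_i\sum_t\mathbf{E}[\|[g_t(x_{i,t})]_+\|^2]$ is bounded by $2\sum_i\sum_t\mathbf{E}[\|[g_{i,t}(x_{i,t})]_+\|^2]$ plus $\frac{2G_2^2}{n}\sum_i\sum_j\sum_t\mathbf{E}[\|x_{i,t}-x_{j,t}\|^2]$. The squared-consensus term is controlled via \eqref{Lemma6-eq2}, then $\varepsilon_{i,t}^z$ is split as in \eqref{Lemma9-proof-ii-eq6}, with the compression part bounded by $Cs_t^2$ through \eqref{Lemma9-proof-ii-eq9}. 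The point where the two-point case genuinely differs --- and the reason $\vartheta_6=\frac{4\max\{1,2G_2^2\vartheta_4\}}{\min\{1,1/(2\gamma_0)\}}$ enters --- is that the primal drift $\sum_i\sum_t\|x_{i,t}-z_{i,t+1}\|^2$ is \emph{not} bounded by an explicit sequence as in the one-point case, but is folded into the left-hand side of \eqref{Lemma11-eq2}, which already carries the term $\frac{\|x_{i,t}-z_{i,t+1}\|^2}{4\gamma_0}$. Concretely I would choose $\vartheta_6$ so that, pointwise, $2\|[g_{i,t}(x_{i,t})]_+\|^2+4G_2^2\vartheta_4\|x_{i,t}-z_{i,t+1}\|^2\le\vartheta_6\big(\tfrac12\|[g_{i,t}(x_{i,t})]_+\|^2+\tfrac{1}{4\gamma_0}\|x_{i,t}-z_{i,t+1}\|^2\big)$, so that \eqref{Lemma11-eq2} (with $\Lambda_T(y)\le0$ for $y\in\mathcal{X}_T$) bounds the whole thing by $\vartheta_6{\bar\Psi}_T$. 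Finally the Hölder/Jensen step \eqref{Lemma9-proof-ii-eq11} converts this into \eqref{Lemma12-eq2} with ${\hat\Psi}_T=\vartheta_6{\bar\Psi}_T+2G_2^2\vartheta_3+4nCG_2^2\vartheta_4\sum_t s_t^2$.

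For part~(iii), I would mimic \eqref{Lemma9-proof-iii-eq1}--\eqref{Lemma9-proof-iii-eq3}: decompose $\|[g_t(x_{j,t})]_+\|\le\|[g_{i,t}(x_{i,t})]_+\|+G_2\|x_{i,t}-x_{j,t}\|$, bound $\frac1n\sum_i\sum_j\sum_t\mathbf{E}[\|x_{i,t}-x_{j,t}\|]$ by \eqref{Lemma6-eq1}, split $\varepsilon_{i,t}^z$ with the compression part bounded by $\sqrt C s_t$, and --- the essential simplification relative to Lemma~7 --- bound $\|z_{i,t+1}-x_{i,t}\|\le\alpha_t\|b_{i,t+1}\|\le\alpha_t(pG_1+pG_2\|v_{i,t+1}\|)$ using \eqref{Lemma3-eq2}, \eqref{lemma5-eq3} and \eqref{lemma5-eq7}, then substitute $\|v_{i,t+1}\|=\gamma_t\|[g_{i,t}(x_{i,t})]_+\|$ from \eqref{Algorithm2-eq3} together with $\gamma_t=\gamma_0/\alpha_t$, so that $\alpha_t\gamma_t\|[g_{i,t}(x_{i,t})]_+\|=\gamma_0\|[g_{i,t}(x_{i,t})]_+\|$. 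This produces the coefficient $\vartheta_7=1+pG_2^2\vartheta_2\gamma_0$ in front of $\sum_i\sum_t\mathbf{E}[\|[g_{i,t}(x_{i,t})]_+\|]$; unlike \eqref{Lemma9-proof-iii-eq4}, no extra $nG_2\sum_t\delta_t$ term appears, because the dual update \eqref{Algorithm2-eq3} uses $g_{i,t}(x_{i,t})$ rather than $g_{i,t}(e_{i,t})$, so the bound is already phrased in $\|[g_{i,t}(x_{i,t})]_+\|$. Collecting terms gives \eqref{Lemma12-eq3} with ${\hat\Xi}_T=nG_1\vartheta_1+npG_1G_2\vartheta_2\sum_t\alpha_t+n\sqrt C G_2\vartheta_2\sum_t s_t$.

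The main obstacle I anticipate is part~(ii): because the primal drift is controllable only implicitly through the combined left-hand side of \eqref{Lemma11-eq2}, I will have to choose the Young's-inequality split in \eqref{Lemma9-proof-ii-eq6} and the rescaling factor $\vartheta_6$ consistently, so that both the squared constraint-violation sum and the squared primal-drift sum land under the coefficients $\tfrac12$ and $\tfrac{1}{4\gamma_0}$ that appear in \eqref{Lemma11-eq2}, while keeping the compression residual $\sum_t s_t^2$ and the graph-geometry constant $\vartheta_3$ as separate additive terms. The remaining steps are essentially bookkeeping parallel to the proof of Lemma~7.
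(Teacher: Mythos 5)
Your proposal is correct and follows essentially the same route as the paper: part (i) repeats the argument of Lemma~8(i) with \eqref{Lemma8-eq1} replaced by \eqref{Lemma11-eq1}; part (ii) reuses \eqref{Lemma9-proof-ii-eq1}--\eqref{Lemma9-proof-ii-eq6} and \eqref{Lemma9-proof-ii-eq9}, folding the squared primal drift into the combined left-hand side of \eqref{Lemma11-eq2} via the rescaling factor $\vartheta_6$ exactly as the paper does; and part (iii) mirrors \eqref{Lemma9-proof-iii-eq1}--\eqref{Lemma9-proof-iii-eq2} with the two-point bound $\| {{z_{i,t + 1}} - {x_{i,t}}} \| \le p{G_1}{\alpha _t} + p{G_2}{\gamma _0}\| {{{[ {{g_{i,t}}( {{x_{i,t}}} )} ]}_ + }} \|$, correctly noting that no $\sum\nolimits_t {{\delta _t}}$ correction is needed since the dual update evaluates $g_{i,t}$ at $x_{i,t}$. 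Your choice of $\vartheta_6$ and the resulting constants ${\hat \Phi }_T$, ${\hat \Psi }_T$, $\vartheta_7$, ${\hat \Xi }_T$ all match the statement.
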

\begin{proof}
(i)
Similar to the way to get \eqref{Lemma9-eq1}, replacing \eqref{Lemma8-eq1} with \eqref{Lemma11-eq1}, we know that \eqref{Lemma12-eq1} holds. 

(ii)
We have
\begin{flalign}
\nonumber
&\;\;\;\;\;\sum\limits_{i = 1}^n {\sum\limits_{t = 1}^T {{{\mathbf{E}}[ {{{\| {\varepsilon _{i,t}^z} \|}^2}} ]} } } \\
& \le 2\sum\limits_{i = 1}^n {\sum\limits_{t = 1}^T {{{\mathbf{E}}[ {{{\| {{x_{i,t}} - {z_{i,t + 1}}} \|}^2}} ]}} } 
 + 2\sum\limits_{i = 1}^n {\sum\limits_{t = 1}^T {{{\mathbf{E}}[ {{{\| {{z_{i,t + 1}} - {{\hat z}_{i,t + 1}}} \|}^2}} ]}} }.
\label{Lemma12-proof-ii-eq3}
\end{flalign}

From \eqref{Lemma9-proof-ii-eq1}--\eqref{Lemma9-proof-ii-eq5}, \eqref{Lemma12-proof-ii-eq3}, \eqref{Lemma9-proof-ii-eq9}, we have
\begin{flalign}
\nonumber
\frac{1}{n}\sum\limits_{i = 1}^n {\sum\limits_{t = 1}^T {{\mathbf{E}}\big[ {{{\| {{{[ {{g_t}( {{x_{i,t}}} )} ]}_ + }} \|}^2}} \big]} } 
&\le 2\sum\limits_{i = 1}^n {\sum\limits_{t = 1}^T {{\mathbf{E}}\big[ {{{\| {{{[ {{g_{i,t}}( {{x_{i,t}}} )} ]}_ + }} \|}^2}} \big]} } 
 + 4G_2^2{\vartheta _4}\sum\limits_{i = 1}^n {\sum\limits_{t = 1}^T {{{\mathbf{E}}[ {{{\| {{x_{i,t}} - {z_{i,t + 1}}} \|}^2}} ]} } } \\
&\;\; + 2G_2^2{\vartheta _3} + 4nCG_2^2{\vartheta _4}\sum\limits_{t = 1}^T {s_t^2}.
\label{Lemma12-proof-ii-eq4}
\end{flalign}
From \eqref{Lemma12-proof-ii-eq4}, \eqref{Lemma9-proof-ii-eq10} and \eqref{Lemma11-eq2}, we have
\begin{flalign}
\frac{1}{n}\sum\limits_{i = 1}^n {\sum\limits_{t = 1}^T {{\mathbf{E}}\big[ {{{\| {{{[ {{g_t}( {{x_{i,t}}} )} ]}_ + }} \|}^2}} \big]} } 
\le {\vartheta _6}{{\bar \Psi }_T} + 2G_2^2{\vartheta _3} + 4nCG_2^2{\vartheta _4}\sum\limits_{t = 1}^T {s_t^2}.
\label{Lemma12-proof-ii-eq5}
\end{flalign}

Combining \eqref{Lemma12-proof-ii-eq5} and \eqref{Lemma9-proof-ii-eq11} yields \eqref{Lemma12-eq2}.

(iii)
From \eqref{Lemma9-proof-iii-eq1}--\eqref{Lemma9-proof-iii-eq2}, we have
\begin{flalign}
\nonumber
\frac{1}{n}\sum\limits_{i = 1}^n {\sum\limits_{t = 1}^T {{\mathbf{E}}\big[ {\| {{{[ {{g_t}( {{x_{i,t}}} )} ]}_ + }} \|} \big]} } 
&\le \sum\limits_{i = 1}^n { {\sum\limits_{t = 1}^T {{\mathbf{E}}\big[ {\| {{{[ {{g_{i,t}}( {{x_{i,t}}} )} ]}_ + }} \|} \big]} } }  + n{G_2}{\vartheta _1}\\
&\;\;  
+ {G_2}{\vartheta _2}\sum\limits_{i = 1}^n {\sum\limits_{t = 1}^T {{\mathbf{E}}[ {\| {{z_{i,t + 1}} - {x_{i,t}}} \|} ]} }  + n\sqrt C {G_2}{\vartheta _2}\sum\limits_{t = 1}^T {{s_t}}.
\label{Lemma12-proof-iii-eq1}
\end{flalign}
By applying \eqref{Lemma3-eq2} to the update \eqref{Algorithm2-eq5}, we have
\begin{flalign}
\nonumber
\| {{z_{i,t + 1}} - {x_{i,t}}} \| &\le {\alpha _t}\| {{b_{i,t + 1}}} \| \\
\nonumber
& = {\alpha _t}\big\| {{{\hat \nabla }_2}{f_{i,t}}( {{x_{i,t}}} ) + { {{{\hat \nabla }_2}{g_{i,t}}( {{x_{i,t}}} )} }{v_{i,t + 1}}} \big\|\\
\nonumber
& \le {\alpha _t}\| {{{\hat \nabla }_2}{f_{i,t}}( {{x_{i,t}}} )} \| + {\alpha _t}\| {{{\hat \nabla }_2}{g_{i,t}}( {{x_{i,t}}} )} \|\| {{v_{i,t + 1}}} \|\\
& \le p{G_1}{\alpha _t} + p{G_2}{\gamma _0}\| {{{[ {{g_{i,t}}( {{x_{i,t}}} )} ]}_ + }} \|,
\label{Lemma12-proof-iii-eq2}
\end{flalign}
where the equality holds due to \eqref{Algorithm2-eq4}, the last inequality holds due to \eqref{lemma5-eq3}, \eqref{lemma5-eq7} and ${\gamma _t} = {\gamma _0}/{\alpha _t}$.

Combining \eqref{Lemma12-proof-iii-eq1} and \eqref{Lemma12-proof-iii-eq2} yields \eqref{Lemma12-eq3}.
\end{proof}
In the following, we prove that \eqref{theorem2-eq2}--\eqref{theorem2-eq4} in Theorem~2 hold in three separate steps.

(i)
From \eqref{theorem2-eq1} and \eqref{Theorem1-proof-i-eq1}, for any $T \in \mathbb{N}_ +$, we have
\begin{flalign}
&\sum\limits_{t = 1}^T {{s_t}} \le {s_0}\sum\limits_{t = 1}^T {\frac{1}{t}}  \le {s_0}\Big( {\int_1^T {\frac{1}{t}dt}  + 1} \Big) \le  {s_0}{\log ( T ) + {s_0}}, \label{Theorem2-proof-i-eq4}\\
&\sum\limits_{t = 1}^T {\frac{{{s_t}}}{{{\alpha _t}}}}  \le \frac{{{s_0}}}{{{\alpha _0}}}\sum\limits_{t = 1}^T {\frac{1}{{{t^{1 - {\theta _1}}}}}}  
\le \frac{{{{s_0}T^{ {\theta _1}}}}}{{{\theta _1} }{\alpha _0}}.
\label{Theorem2-proof-i-eq5}
\end{flalign}
Combining \eqref{Lemma12-eq1}, \eqref{Theorem2-proof-i-eq4}--\eqref{Theorem2-proof-i-eq5}, and \eqref{Theorem1-proof-i-eq4}, 
from \eqref{theorem2-eq1} with ${\theta _2} \in ( {{\theta _1},1} )$, \eqref{Theorem1-proof-i-eq1} and the arbitrariness of ${y} \in {\mathcal{X}_T}$, we have
\begin{flalign}
\nonumber
&\;\;\;\;\;\mathbf{E}[{{\rm{Net}\mbox{-}\rm{Reg}}( T )}] \\
\nonumber
& \le \frac{{\big( {8{p^2}G_1^2 + r{{( \mathbb{X} )}^2} + R{{( \mathbb{X} )}^2} 
+ 4{G_1}R( \mathbb{X} ) + 8Lr( \mathbb{X} )R( \mathbb{X} ) + 4{\big( {{G_1} + 2LR( \mathbb{X} )} \big)^2}\vartheta _2^2} \big){\alpha _0}{T^{1 - {\theta _1}}}}}{{4( {1 - {\theta _1}} )}}\\
\nonumber
&\;\; + \big( {{G_1} + 2LR( \mathbb{X} )} \big){\vartheta _1} 
+ \big( {{G_1} + 2LR( \mathbb{X} )} \big){\vartheta _2}{s_0} + \big( {{G_1} + 2LR( \mathbb{X} )} \big){\vartheta _2}{s_0}\log ( T ) \\
&\;\; + \frac{{2R( \mathbb{X} )\big( {\sqrt C {s_0} + {\theta _1}R( \mathbb{X} )} \big){T^{{\theta _1}}}}}{{{\theta _1}{\alpha _0}}}.
\label{Theorem2-proof-i-eq6}
\end{flalign}

From \eqref{Theorem2-proof-i-eq6}, we know that \eqref{theorem2-eq2} holds.

(ii)
From \eqref{theorem2-eq1}, for any $T \in \mathbb{N}_ +$, we have
\begin{flalign}
&\sum\limits_{t = 1}^T {\frac{1}{{\gamma _t^2}}}  = \frac{1}{{\gamma _0^2}}\sum\limits_{t = 1}^T {\alpha _t^2}  \le \frac{{{\alpha _0}}}{{\gamma _0^2}}\sum\limits_{t = 1}^T {{\alpha _t}}, \label{Theorem2-proof-ii-eq1} \\
&\sum\limits_{t = 1}^T {\xi _t^2}  = \sum\limits_{t = 1}^T {\alpha _t^2}  \le {\alpha _0}\sum\limits_{t = 1}^T {{\alpha _t}}, \label{Theorem2-proof-ii-eq2}\\
&\sum\limits_{t = 1}^T {\delta _t^2}  = r{( \mathbb{X} )^2}\sum\limits_{t = 1}^T {\alpha _t^2}  \le r{( \mathbb{X} )^2}{\alpha _0}\sum\limits_{t = 1}^T {{\alpha _t}}, \label{Theorem2-proof-ii-eq3}\\
&\sum\limits_{t = 1}^T {\frac{{{\xi _t}}}{{{\gamma _t}}}}  = \frac{1}{{{\gamma _0}}}\sum\limits_{t = 1}^T {\alpha _t^2}  \le \frac{{{\alpha _0}}}{{{\gamma _0}}}\sum\limits_{t = 1}^T {{\alpha _t}},
\label{Theorem2-proof-ii-eq4} \\
&\sum\limits_{t = 1}^T {\frac{{{\delta _t}}}{{{\gamma _t}}}}  
= \frac{{r( \mathbb{X} )}}{{{\gamma _0}}}\sum\limits_{t = 1}^T {\alpha _t^2}  \le \frac{{r( \mathbb{X} ){\alpha _0}}}{{{\gamma _0}}}\sum\limits_{t = 1}^T {{\alpha _t}},
\label{Theorem2-proof-ii-eq5} \\
&\sum\limits_{t = 1}^T {s_t^2}  \le {s_0}\sum\limits_{t = 1}^T {{s_t}}. \label{Theorem2-proof-ii-eq6}
\end{flalign}
Combining \eqref{Lemma12-eq2} and \eqref{Theorem2-proof-ii-eq1}--\eqref{Theorem2-proof-ii-eq6}, 
from \eqref{theorem2-eq1}, \eqref{Theorem2-proof-i-eq4} and \eqref{Theorem1-proof-i-eq1}, we have
\begin{flalign}
\nonumber
&\;\;\;\;\;{( {\frac{1}{n}\sum\limits_{i = 1}^n {\sum\limits_{t = 1}^T {{\mathbf{E}}[ {\| {{{[ {{g_t}( {{x_{i,t}}} )} ]}_ + }} \|} ]} } } )^2} \\
\nonumber
& \le \frac{2\big( {nR{{( \mathbb{X} )}^2}{\vartheta _6} + G_2^2{\vartheta _3} + n\sqrt C R( \mathbb{X} ){\vartheta _6}{s_0} + 2nCG_2^2{\vartheta _4}s_0^2{\gamma _0}} \big)T}{{{\gamma _0}}}
\\
\nonumber
&\;\; + \frac{{n{\vartheta _6}{\alpha _0}\big( {8{G_1}R( \mathbb{X} ) + 8{p^2}G_1^2{\alpha _0} + r{{( \mathbb{X} )}^2}{\alpha _0} 
+ R{{( \mathbb{X} )}^2}{\alpha _0} + 4{G_1}R( \mathbb{X} ){\alpha _0}} \big){T^{2 - {\theta _1}}}}}{{4( {1 - {\theta _1}} ){\gamma _0}}}\\
&\;\;  + \frac{{2nLr( \mathbb{X} )R( \mathbb{X} ){\vartheta _6}\alpha _0^2{T^{2 - {\theta _1}}}}}{{( {1 - {\theta _1}} ){\gamma _0}}}
+ \frac{{2n\sqrt C {s_0}\big( {R( \mathbb{X} ){\vartheta _6} 
 + 2\sqrt C G_2^2{\vartheta _4}{s_0}{\gamma _0}} \big)T\log ( T )}}{{{\gamma _0}}}.
\label{Theorem2-proof-ii-eq8}
\end{flalign}
From \eqref{Theorem2-proof-ii-eq8}, we know that \eqref{theorem2-eq3} holds.

(iii)
Similar to the way to get \eqref{Theorem1-proof-iii-eq1}, we have
\begin{flalign}
{\mathbf{E}}[ {{\Lambda _T}( {{x_s}} )} ] 
\le  - {\varsigma _s}\sum\limits_{i = 1}^n {\sum\limits_{t = 1}^T {{\mathbf{E}}\big[ {\| {{{[ {{g_{i,t}}( {{x_{i,t}}} )} ]}_ + }} \|} \big]} }.
\label{Theorem2-proof-iii-eq1}
\end{flalign}
Selecting $y = {x_s}$ in \eqref{Lemma11-eq2}, from \eqref{Theorem2-proof-iii-eq1}, we have
\begin{flalign}
{\varsigma _s}\sum\limits_{i = 1}^n {\sum\limits_{t = 1}^T {\mathbf{E}}[{\| {{{[ {{g_{i,t}}( {{x_{i,t}}} )} ]}_ + }} \|}] }  \le {{\bar \Psi }_T}.
\label{Theorem2-proof-iii-eq2}
\end{flalign}
Combining \eqref{Lemma12-eq3}, \eqref{Theorem2-proof-ii-eq1}--\eqref{Theorem2-proof-ii-eq5}, \eqref{Theorem2-proof-iii-eq2}, and \eqref{Theorem2-proof-i-eq4}, 
from \eqref{theorem2-eq1} with ${\theta _2} \ge 1$ and \eqref{Theorem1-proof-i-eq1}, we have
\begin{flalign}
\nonumber
&\;\;\;\;\;\mathbf{E}[{{\rm{Net} \mbox{-} \rm{CCV}}( T )}] \\
\nonumber
& \le n{G_1}{\vartheta _1} +  
+ \frac{{n\sqrt C {s_0}\big( {2R( \mathbb{X} ){\vartheta _7} + {\varsigma _s}{G_2}{\vartheta _2}{\gamma _0}} \big)\log ( T )}}{{{\varsigma _s}{\gamma _0}}}
+ \frac{{n{G_1}{\alpha _0}\big( {2R( \mathbb{X} ){\vartheta _7} 
+ p{\varsigma _s}{G_2}{\vartheta _2}{\gamma _0}} \big){T^{1 - {\theta _1}}}}}{{( {1 - {\theta _1}} ){\varsigma _s}{\gamma _0}}}\\
\nonumber
&\;\; + \frac{{2nR{{( \mathbb{X} )}^2}{\vartheta _7}}}{{{\varsigma _s}{\gamma _0}}} + \frac{{n{\vartheta _7}\alpha _0^2\big( {8{p^2}G_1^2 + r{{( \mathbb{X} )}^2} + R{{( \mathbb{X} )}^2} 
+ 4{G_1}R( \mathbb{X} ) + 8Lr( \mathbb{X} )R( \mathbb{X} )} \big){T^{1 - {\theta _1}}}}}{{4( {1 - {\theta _1}} ){\varsigma _s}{\gamma _0}}} \\
&\;\; + \frac{{n\sqrt C {s_0}\big( {2R( \mathbb{X} ){\vartheta _7} + {\varsigma _s}{G_2}{\vartheta _2}{\gamma _0}} \big)}}{{{\varsigma _s}{\gamma _0}}}.
\label{Theorem2-proof-iii-eq4}
\end{flalign}
From \eqref{Theorem2-proof-iii-eq4}, we know that \eqref{theorem2-eq4} holds.

\bibliographystyle{IEEEtran}
\bibliography{reference_online}

\end{document}